\titleformat{\section}[block]{\large\scshape\bfseries\filcenter}{\thesection.}{1em}{}		% Creates section titles
\titleformat{\subsection}[hang]{\large\scshape\bfseries}{\thesubsection}{1em}{}			% Creates subsection titles
\titleformat{\subsubsection}[hang]{\large\scshape\bfseries}{\thesubsubsection}{1em}{}		% Creates subsection titles
\newtheorem{lemma}{Lemma}[section]
\newtheorem{theorem}[lemma]{Theorem}
\newtheorem{prop}[lemma]{Proposition}
\newtheorem{fact}[lemma]{Fact}
\newtheorem{cor}[lemma]{Corollary}
\newtheorem{question}[lemma]{Question}
\theoremstyle{remark}
\theoremstyle{definition}
\newtheorem{definition}[lemma]{Definition}
\newtheorem{example}[lemma]{Example}
\newtheorem{remark}[lemma]{Remark}
\newtheorem{notation}[lemma]{Notation} 
\newtheorem{construction}[lemma]{Construction}
\newcommand{\Supp}{\operatorname{Supp}}
\newcommand{\Sym}{\operatorname{Sym}} %done
\newcommand{\Ass}{\operatorname{Ass}}
\newcommand{\kk}{\mathbf k}
\renewcommand{\aa}{\mathbf a}
\newcommand{\bb}{\mathbf b}
\newcommand{\mm}{\mathbf m}
\newcommand{\xx}{\mathbf x}
\newcommand{\jJ}{\mathbf J}
\newcommand{\cA}{\mathcal{A}}
\newcommand{\cC}{\mathcal{C}}
\newcommand{\cD}{\mathcal{D}}
\newcommand{\cF}{\mathcal{F}}
\newcommand{\cG}{\mathcal{G}}
\newcommand{\cI}{\mathcal{I}}
\newcommand{\cL}{\mathcal{L}}
\newcommand{\cP}{\mathcal{P}}
\newcommand{\cQ}{\mathcal{Q}}
\newcommand{\cR}{\mathcal{R}}
\newcommand{\cU}{\mathcal{U}} 		% Notice this is different
\newcommand{\NN}{\mathbb{N}}
\newcommand{\RR}{\mathbb{R}}
\newcommand{\ZZ}{\mathbb{Z}}
\newcommand{\ds}{\displaystyle}
\newcommand{\ra}{\rightarrow}
\DeclarePairedDelimiter\abs{\lvert}{\rvert}%
\newcommand{\Ast}{\mathop{\scalebox{2}{\raisebox{-0.2ex}{$\ast$}}}}
\newcommand{\newMatrix}[2]{\left[\Ast_{T\in #1}A_T(#2_T)\right]}
\newcommand{\ideal}[1]{\left\langle #1 \right\rangle}
\newcommand{\lowerIdeal}[1]{\left \langle #1 \right\rangle_{\text{lower}}}
\DeclareMathOperator{\FI}{FI}
\newcommand{\bP}{\mathbf{P}}
\newcommand{\bI}{\mathbf{I}}
\begin{document}

\title[Alexander duals of Sym-invariant Stanley-Reisner Ideals]%{Alexander duals of Sym-invariant Stanley-Reisner Ideals}
{Alexander Duals of Symmetric Simplicial Complexes and Stanley-Reisner Ideals}

\author[A. Almousa]{Ayah Almousa}
\address{University of Minnesota - Twin Cities, 206 Church St SE, Minneapolis, MN-55455}\email{almou007@umn.edu}
%\urladdr{\url{http://umn.edu/~almou007}}

% \thanks{AA was partially supported by the NSF GRFP under Grant No. DGE-1650441.}

\author[K. Bruegge]{Kaitlin Bruegge}
\address{University of Kentucky, 715 Patterson Office Tower, Lexington, KY-40506, USA}
\email{kaitlin\_bruegge@uky.edu}

\author[M. Juhnke-Kubitzke]{Martina Juhnke-Kubitzke}
\address{University of Osnabr\"{uc}k, Albrechtstra\ss e 28a, 49076 Osnabr\"uck, Germany}
\email{juhnke-kubitzke@uni-osnabrueck.de}

\author[U. Nagel]{Uwe Nagel}
\address{University of Kentucky, 715 Patterson Office Tower, Lexington, KY-40506, USA}
\email{uwe.nagel@uky.edu}

\author[A. Pevzner]{Alexandra Pevzner}
\address{University of Minnesota - Twin Cities, 206 Church St SE, Minneapolis, MN-55455}
\email{pevzn002@umn.edu}
%\urladdr{\url{https://www-users.cse.umn.edu/~pevzn002}}

\keywords{Sym-invariant ideals, Alexander dual, irreducible components, face numbers, simplicial complex, cone decomposition, minimal generators}

% \subjclass[2010]{Primary: 13F20,13F55; Secondary: 55U10,05E40}
%\date{\today}
\begin{abstract}
Given an ascending chain $(I_n)_{n\in\mathbb{N}}$ of $\Sym$-invariant squarefree monomial ideals, we study the corresponding  chain of Alexander duals $(I_n^\vee)_{n\in\mathbb{N}}$. Using a novel combinatorial tool, which we call \emph{avoidance up to symmetry}, we provide an explicit description of the minimal generating set up to symmetry in terms of the original generators. Combining this result with methods from discrete geometry, this enables us to show that the number of orbit generators of $I_n^\vee$ is given by a polynomial in $n$ for sufficiently large $n$. The same is true for the number of orbit generators of minimal degree, this degree being a linear function in $n$ eventually. The former result implies that the number of $\Sym$-orbits of primary components of $I_n$ grows polynomially in $n$ for large $n$. As another application, we show that, for each $i\geq 0$, the number of $i$-dimensional faces of the associated Stanley-Reisner complexes of $I_n$ is also given by a polynomial in $n$ for large $n$. 
\end{abstract}
\maketitle

%\tableofcontents

%%%%%%%%%%%%%%%%%%%%%%%%%%%
\section{Introduction} 

We consider families of simplicial complexes $\Delta_n$ that are invariant under the action of the symmetric group $\Sym (n)$ on $n$ letters acting by permuting the vertices $V_n$ of $\Delta_n$. Such a family is called a  \emph{$\Sym$-invariant chain} $(\Delta_n)_{n \in \NN}$  of 
simplicial complexes if $\Sym( n)$ maps any  face of $\Delta_m$ onto a face of $\Delta_n$ whenever $m \le n$ and the iterated cone over $\Delta_n$ with the new vertices as apices is contained in $\Delta_{n+1}$. If there is only one group of vertices, that is, $\Delta_n$ is supported on $[n] =\{1,\ldots,n\}$, then, for any $n \gg 0$, $\Delta_n$ is the $(n-d)$-dimensional skeleton of an $(n-1)$-dimensional simplex for some fixed integer $d \ge 0$. In other words, for sufficiently large $n$, the facets of $\Delta_n$ are precisely the $\Sym(n)$-orbit of $[n-d+1]$ in this case.   
However, if $\Delta_n$ has $c > 1$ classes of vertices, i.e., $V_n = [n]^c = \{j_i \; \mid  \; i \in [c], j \in [n]\}$, and the action of $\Sym (n)$ maintains each class, that is, $\sigma \cdot j_i = (\sigma(j))_i$ there is considerable variety. Still, it turns out that any $\Sym$-invariant chain $(\Delta)_{n \in \NN}$ \emph{stabilizes} in the sense that there is some integer $n_0$ such that the iterated cone over $\Delta_n$ with apices $V_{n+1} \setminus V_n$ equals $\Delta_{n+1}$ whenever $n \ge n_0$, and so in particular $\dim \Delta_{n+1} = c + \dim \Delta_n$ (see \Cref{prop:stabilization of chain of complexes}). 
Moreover, the total number of facets of $\Delta_n$ grows polynomially in $n$ eventually as a consequence of, e.g., \cite[Theorem 6.5]{N}. 

There is a rich literature relating properties of a simplicial complex $\Delta$ to properties of its Alexander dual $\Delta^{\vee}$ (see, e.g., \cite{BT,ER,HH,HRW,MS,TH}). Motivated by these connections we investigate the family of Alexander duals $(\Delta_n^{\vee})_{n \in \NN}$, given any  $\Sym$-invariant chain 
$(\Delta_n)_{n \in \NN}$  of symmetric simplicial complexes.  By \cite[Theorem 7.10]{NR}, the dimension of $\Delta_n^{\vee}$ will grow linearly in $n$ eventually. However, the growth rate is not always the number of classes of vertices $c$, and so the above stabilization of $(\Delta_n)_{n \in \NN}$ does not extend to stabilization of the sequence of Alexander duals $(\Delta_n^{\vee})_{n \in \NN}$. Moreover, the number of facets of $\Delta_n^{\vee}$ can grow exponentially in $n$. 
Nevertheless, we show that the facets of $\Delta_n^{\vee}$ become eventually predictable. We use this to establish the following result. 

\begin{theorem}
   \label{thm:intro-facet growth}
For any $\Sym$-invariant chain $(\Delta_n)_{n \in \NN}$  of 
simplicial complexes, 
the number of $\Sym(n)$-orbits of facets of $\Delta_n^{\vee}$ grows polynomially in $n$ eventually. 
\end{theorem}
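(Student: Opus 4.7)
The plan is to translate the statement into a question about orbit generators of an Alexander dual ideal and then apply the structural machinery developed in the body of the paper. First I would set $I_n := I_{\Delta_n^\vee}$ and observe that, by virtue of the cone-closure property of the chain $(\Delta_n)_{n\in\NN}$, the sequence $(I_n)_{n\in\NN}$ is an ascending chain of $\Sym$-invariant squarefree monomial ideals (in the sense of the abstract); indeed, if $F'$ is a facet of $\Delta_{n+1}$ extending the face $F \cup (V_{n+1}\setminus V_n)$ for a facet $F$ of $\Delta_n$, then the generator $x^{V_n \setminus F}$ of $I_n$ is divisible by the generator $x^{V_{n+1}\setminus F'}$ of $I_{n+1}$. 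The Alexander dual ideal $I_n^\vee$ is then $I_{\Delta_n}$, and its minimal monomial generators correspond bijectively, via complementation in $V_n$, to the minimal non-faces of $\Delta_n$, which are themselves in $\Sym(n)$-equivariant bijection with the facets of $\Delta_n^\vee$. Consequently, counting $\Sym(n)$-orbits of facets of $\Delta_n^\vee$ is the same as counting $\Sym(n)$-orbits of minimal monomial generators of $I_n^\vee$.

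I would then invoke the ``avoidance up to symmetry'' description of the minimal generators of the Alexander dual developed earlier in the paper. This description parametrizes each $\Sym(n)$-orbit of minimal generators of $I_n^\vee$ by a combinatorial datum recording, for each orbit generator $B$ of $I_n$, how many disjoint $\Sym(n)$-translates of $B$ are used and how they interact with the remaining orbit generators, with minimality of the resulting dual generator encoded as a precise avoidance condition among these data. Because the chain $(\Delta_n)$ stabilizes by \Cref{prop:stabilization of chain of complexes}, the set of orbit generators of $I_n$ and the combinatorial shape of the avoidance conditions depend only on a fixed finite structure for all $n$ large enough.

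The final step is a discrete-geometry count. After stabilization, the admissible labelling data correspond to the lattice points of a polyhedral region whose defining linear inequalities depend on the parameter $n$ in an affine way. A standard Ehrhart-type lattice-point argument, combined with the homogeneity forced by the $\Sym$-orbit structure, then yields that the number of admissible labellings is a polynomial in $n$ for $n$ sufficiently large, giving the claimed polynomial growth.

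The main obstacle will be establishing the avoidance description sharply enough that minimality corresponds to genuinely fixed (not $n$-dependent) combinatorial conditions. In particular, one must verify that for all large $n$ the incidence patterns among the orbit generators of $I_n$ that govern minimality of dual generators are the same, so that the relevant polyhedral region is the $n$-fold dilate of a fixed rational polytope; ensuring this, rather than settling for a merely quasi-polynomial count, is the heart of the argument.
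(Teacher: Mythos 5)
Your proposal is correct and follows the paper's own route: setting $I_n := I_{\Delta_n^\vee}$, checking via the cone condition that this is an ascending $\Sym$-invariant chain, identifying facets of $\Delta_n^\vee$ equivariantly with minimal generators of $I_n^\vee = I_{\Delta_n}$ (the minimal non-faces of $\Delta_n$), and then invoking the paper's main count of orbit generators of Alexander duals (\Cref{thm:intro-gen-degrees}, proved via the avoidance criterion and the integral-apex orthant decomposition that upgrades the Ehrhart count from quasi-polynomial to polynomial). The only cosmetic slip is that the minimal generators of $I_{\Delta_n}$ \emph{are} the minimal non-faces, with complementation occurring between those non-faces and the facets of $\Delta_n^\vee$; this does not affect the argument.
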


In fact, more is true. Fixing any integer $d \ge 0$, we prove that the number of $\Sym (n)$-orbits of $d$-dimensional faces of $\Delta_n^{\vee}$ grows polynomially in $n$ eventually (see \Cref{prop: fVectorGeneral}).
\smallskip 

By the Stanley-Reisner correspondence, simplicial complexes can be equivalently described by squarefree monomial ideals. Thus, we also consider $\Sym$-invariant chains $(I_n)_{n \in \NN}$ of squarefree monomial ideals $I_n \subset R_n = \mathbf{k} [x_{i,j} \;  \mid  \; i\in [c], j\in [n]]$. Such a chain is a family of ideals $I_n$ with the property that the action of $\Sym (n)$ takes monomials in $I_m$ to monomials in $I_n$ whenever $m \le n$, where the action is defined by $\sigma \cdot x_{i, j} = x_{i, \sigma (j)}$. Here, $\mathbf{k}$ is any field. Such chains were first considered in \cite{HS} and correspond to $\FI$-ideals $\bI$ of a Noetherian polynomial $\FI$-algebra $\bP$ as introduced in \cite{NR2}. Finite generation of $\bI$ is equivalent to stabilization of $(I_n)_{n \in \NN}$ in the sense that there are finitely many monomials 
$\mm_1,\ldots,\mm_t$ such that $I_n$ is minimally generated by the $\Sym(n)$-orbits of $\mm_1,\ldots,\mm_t$ if $n \gg 0$. Such stabilization does not occur in the sequence of Alexander duals $(I_n^{\vee})_{n \in \NN}$. Nevertheless, given the orbit generators $\mm_1,\ldots,\mm_t$ of the original $\Sym$-invariant chain, we describe a minimal generating set of $I_n^{\vee}$ whenever $n \gg 0$. It allows us to show the following result:

\begin{theorem}
   \label{thm:intro-gen-degrees} 
For any $\Sym$-invariant chain $(I_n)_{n \in \NN}$ of squarefree monomial ideals $I_n \subset R_n$, the number of $\Sym(n)$-orbits whose union gives the minimal generators of $I_n^{\vee}$  grows eventually polynomially in $n$. 

Moreover, the least degree of a minimal generator of $I_n^{\vee}$ is given by a linear function $d$ in $n$ if $n \gg 0$. The number of $\Sym(n)$-orbits of monomials whose union is the set of degree $d(n)$ generators of $I_n^{\vee}$  is given by a polynomial in $n$. 
\end{theorem}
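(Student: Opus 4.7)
The plan is to reduce all three assertions to an explicit parameterization of the $\Sym(n)$-orbits of minimal generators of $I_n^\vee$, afforded by the avoidance-up-to-symmetry framework developed earlier in the paper. Let $\mm_1,\ldots,\mm_t$ be the stable orbit generators of the chain $(I_n)$, so that for $n \gg 0$ the ideal $I_n$ is minimally generated by the orbits $\Sym(n) \cdot \mm_i$. Then
\[
   I_n^\vee \;=\; \bigcap_{i=1}^{t} \bigcap_{\sigma \in \Sym(n)} P_{\sigma \cdot \mm_i},
\]
where $P_\mm$ denotes the monomial prime ideal generated by the variables in $\supp(\mm)$. Consequently, the minimal generators of $I_n^\vee$ correspond bijectively to the minimal transversals (minimal hitting sets) of the family $\{\supp(\sigma \cdot \mm_i)\}_{i,\sigma}$.

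The first step is to apply the avoidance-up-to-symmetry tool to show that, for $n$ sufficiently large, the $\Sym(n)$-orbits of such minimal transversals are parameterized by pairs $(\tau,\lambda)$, where $\tau$ ranges over a finite set of combinatorial ``types'' depending only on $\mm_1,\ldots,\mm_t$, and $\lambda$ records a choice of indices in $[n]$ subject to constraints imposed by $\tau$. Intuitively, $\tau$ encodes which orbits $\Sym(n)\cdot\mm_i$ contribute to the transversal and which positions within their supports are used, normalized modulo the diagonal $\Sym(n)$-action on the indices.

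Granting this parameterization, the first assertion follows by fixing $\tau$ and observing that the associated orbits of minimal transversals are in bijection with lattice points in a rational polytope (or finite union of such) whose dimension depends only on $\tau$. The count of such lattice points is a polynomial in $n$ for $n \gg 0$, either via Ehrhart-type reasoning or by invoking the general polynomiality result \cite[Theorem~6.5]{N} already used in the paper; summing over the finitely many types yields the polynomial growth of the total number of orbits.

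For the second assertion, the minimum degree of a minimal generator of $I_n^\vee$ equals the minimum cardinality of a transversal, which in turn equals $|V_n| - \max\{|F| : F \in \Delta_n\}$, where $\Delta_n$ is the Stanley--Reisner complex of $I_n$. The stability of the underlying chain forces the maximum face size of $\Delta_n$ to be eventually a linear function of $n$, so $d(n)$ is linear as well. Finally, restricting the parameterization of the first step to the types that realize this minimum cardinality leaves a finite list of ``minimum-degree types,'' each contributing polynomially many orbits by the same lattice-point count, proving the third assertion. The main obstacle is carrying out the first step: producing a combinatorially clean parameterization of orbits of minimal transversals, and correctly identifying when two ostensibly different transversals lie in the same $\Sym(n)$-orbit, is exactly what the avoidance-up-to-symmetry framework is designed to accomplish.
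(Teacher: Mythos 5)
Your overall strategy --- parameterize the $\Sym(n)$-orbits of minimal generators of $I_n^{\vee}$ by finitely many combinatorial types, count lattice points type by type, and restrict to the minimum-degree types for the last assertion --- is exactly the strategy of the paper. As written, however, the proposal is a plan rather than a proof, and the two steps you defer are precisely where the difficulty lies. First, the ``clean parameterization'' is not obtained by one application of avoidance up to symmetry: after producing a generating set indexed by types (the sets $\cG_{\cF,\cC}(n)$ of \Cref{def: generalGenSet}), one must decide which elements are \emph{minimal} generators, and an element of one type can be rendered redundant by a divisor lying in a \emph{different} type. The paper handles this with the Divisibility, Exchange, Restriction and Membership Lemmas, arrives at the minimal generating set of \Cref{thm:min gens in general}, and computes its cardinality by a double inclusion--exclusion over pairs of types (\Cref{lem:InclusionExclusion}); the crucial point making this tractable is that membership of a type-$(\cF,\cC)$ matrix in the ideal generated by another type is itself expressible by linear inequalities on sums of the exponents $j_T$ over order ideals (\Cref{cor: ADmembershipLemma}). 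Without this, ``orbits of minimal transversals of a fixed type'' is not visibly a lattice-point set, and a naive union over types overcounts.

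Second, ``Ehrhart-type reasoning'' for rational polytopes yields quasi-polynomials in general, and quasi-polynomial growth is already known from \cite{DEF}; the entire content of the theorem is the upgrade to a genuine polynomial. To get that, the paper proves \Cref{prop: disjoint cone decomposition}: the relevant polyhedra, cut out by inequalities on coordinate sums with integer right-hand sides, decompose into disjoint orthants with \emph{integral} apices, whence the slice counts $\binom{n-a+m-\sum_{i} c_i}{m-1}$ are honest polynomials in $n$ (\Cref{cor:counting}). You would need to supply an argument of this kind, or prove integrality of the vertices of your polytopes. A smaller point: for the linearity of $d(n)$ you assert that stability of the chain forces the maximum face size of $\Delta(I_n)$ to be eventually linear; this is true but does not follow immediately from stabilization alone (the growth rate need not equal $c$), and the paper invokes \cite[Theorem 7.10]{NR} for it. With these ingredients supplied, your outline becomes the paper's proof.
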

Furthermore, we provide a bound of the degree of the mentioned polynomials, which is attained in examples. 

We will show that \Cref{thm:intro-facet growth} is a direct (equivalent) consequence of the first part of this statement. 

Moreover, \Cref{thm:intro-gen-degrees} has consequences for the ideals in the original $\Sym$-invariant chain. By a result of Draisma, Eggermont and Farooq \cite{DEF}, for any $\FI$-ideal $\bI = (I_n)_{n \in \NN}$, the number of $\Sym(n)$-orbits of primary components of $I_n$ grows quasi-polynomially in $n$ eventually. If $\bI$ is a squarefree monomial ideal \Cref{thm:intro-gen-degrees} implies a stronger conclusion. 

\begin{cor}
    \label{cor:intro-number min gens}
If $(I_n)_{n \in \NN}$ is any  $\Sym$-invariant chain of squarefree monomial ideals, then the number of $\Sym(n)$-orbits of primary components of $I_n$ grows polynomially in $n$ eventually. 
\end{cor}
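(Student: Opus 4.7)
\bigskip

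\noindent\textbf{Proof proposal for \Cref{cor:intro-number min gens}.} The plan is to reduce the statement to the first part of \Cref{thm:intro-gen-degrees} via Alexander duality. The key observation is that for any squarefree monomial ideal $I \subset R_n$, the primary decomposition and the Alexander dual carry the same combinatorial information: if $I = \bigcap_{P} P$ is the (minimal) primary decomposition, where each $P = \langle x_{i_1,j_1}, \ldots, x_{i_r,j_r}\rangle$ is a monomial prime, then $I^{\vee}$ is minimally generated by the monomials $x_{i_1,j_1}\cdots x_{i_r,j_r}$, one for each minimal prime $P$ of $I$. Because $I$ is squarefree, its associated primes and its minimal primes coincide, so the primary components of $I$ are in bijection with the minimal generators of $I^{\vee}$.

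Next I would verify that this bijection is $\Sym(n)$-equivariant. This is straightforward from the definitions: the action $\sigma \cdot x_{i,j} = x_{i, \sigma(j)}$ on $R_n$ permutes the variables, and hence sends a monomial prime $\langle x_{i_1,j_1}, \ldots, x_{i_r,j_r}\rangle$ to $\langle x_{i_1,\sigma(j_1)}, \ldots, x_{i_r,\sigma(j_r)}\rangle$, while sending the corresponding squarefree monomial generator of $I^{\vee}$ to its image under the same index substitution. Moreover, one checks directly that $(\sigma \cdot I_n)^{\vee} = \sigma \cdot (I_n^{\vee})$. Consequently, the bijection between primary components of $I_n$ and minimal generators of $I_n^{\vee}$ descends to a bijection between $\Sym(n)$-orbits of primary components of $I_n$ and $\Sym(n)$-orbits of minimal generators of $I_n^{\vee}$.

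With this equivariant bijection in hand, the corollary follows at once: by the first assertion of \Cref{thm:intro-gen-degrees}, the number of $\Sym(n)$-orbits of minimal generators of $I_n^{\vee}$ is given by a polynomial in $n$ for all $n \gg 0$, so the same holds for the number of $\Sym(n)$-orbits of primary components of $I_n$.

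The only real content beyond invoking \Cref{thm:intro-gen-degrees} is the Alexander duality bookkeeping and checking $\Sym(n)$-equivariance; there should be no genuine obstacle, since both facts are essentially formal once the correct definitions are written down. I expect the write-up to be a short paragraph noting the equivariance of the standard correspondence between minimal primes of a squarefree monomial ideal and minimal generators of its Alexander dual, followed by a one-line appeal to \Cref{thm:intro-gen-degrees}.
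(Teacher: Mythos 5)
Your proposal is correct and matches the paper's argument, which is the one-line observation that the minimal generators of $I_n^{\vee}$ are in bijection with the associated (equivalently, minimal) primes of $I_n$, followed by an appeal to \Cref{thm:intro-gen-degrees}. Your additional verification of $\Sym(n)$-equivariance is a reasonable elaboration of the same idea rather than a different route.
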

Furthermore, the second part of \Cref{thm:intro-facet growth} implies that the number of $\Sym(n)$-orbits of primary components of $I_n$ with minimum height (the latter also depending on $n$) grows polynomially in $n$ eventually as well. Recall that these components of minimal height determine the height and the degree of $I_n$. 

Our methods are constructive. We explicitly describe the $\Sym(n)$-orbits of monomials that minimally generate the ideal $I_n^{\vee}$, given the monomials that generate the $\Sym$-invariant chain $(I_n)_{n \in \NN}$ of squarefree monomial ideals. Our arguments rely on two combinatorial results that are of independent interest. The first key finding concerns avoidance up to symmetry. In a special case it considers two maps $f, g \colon N \to 2^M$, where $M$ and $N$ are finite sets and $2^M$ is the power set of $M$ partially ordered by inclusion of subsets of $M$. In \Cref{thm: combinatorial statement}, we characterize the existence of a permutation $\sigma \in \Sym (N)$ such that $f(i)$ and $(g \circ \sigma) (i)$ are disjoint for every $i \in N$. In fact, we provide two equivalent conditions for this behavior. Both amount to solubility of certain systems of inequalities among the fibers of $f$ and $g$ and involve upper order ideals of $2^M$. 

Our second key result is in discrete geometry. We consider a polyhedron $P$ in some $\RR^s$ whose supporting hyperplanes are defined by equalities on sums of coordinates. We show that $P$ is either empty or a disjoint union of finitely many pointed rational cones with integral apices (see \Cref{prop: disjoint cone decomposition}). As a consequence, Ehrhart theory implies that certain counts of integer points of $P$ are given by a polynomial. This result will be crucial to determine the number of $\Sym (n)$-orbits of minimal generators of Alexander dual ideals $I_n^{\vee}$. 
\smallskip

We now outline the organization of the article. In \Cref{sec: avoidanceUpToSym} we provide background on order ideals and prove the mentioned result concerning avoidance up to symmetry (see \Cref{thm: combinatorial statement}). \Cref{sec: background} treats invariant chains of ideals and simplicial complexes and relates them via Alexander duality. In \Cref{sec: alexDualsOrderIdeals} we give criteria for containment in the Alexander dual, as well as for divisibility of monomials up to symmetry and being a minimal generator of the Alexander dual (see \Cref{coro: inequalities for alexander dual}, \Cref{lem: divisibility up to symmetry}, \Cref{lem: exchangeLemma}). In \Cref{sec: oneOrbit}, using the results from the previous sections, we provide a description of the minimal generating set (up to symmetry) for the Alexander duals of an invariant chain of squarefree monomial ideals with one orbit generator (\Cref{cor: one orbit minimal gen set} and \Cref{prop: simplified MG_C}). We generalize this result to an arbitrary number of orbit generators in \Cref{sec: generalCase} (\Cref{thm:min gens in general}). The goal of \Cref{sec: coneDecomp} is to prove the mentioned results from discrete geometry that will be essential for counting minimal generators. We use those results together with the description of the minimal generators to prove \Cref{thm:intro-gen-degrees} and \Cref{cor:intro-number min gens} in \Cref{sec: counting}. The last section gives an application to face numbers of chains of simplicial complexes (\Cref{prop: fVectorGeneral}).

%%%%%%%%%%%%%%%%%%%%%%%%%%%

%%%%%%%%%%%%%%%%%%%%%%%%%%%%%%%%%%%%%%%%%%%%%%%%%%%%%%%%%%%%%%%%%%%%%%%%%%%%%%%%%
\section{Order Ideals and Avoidance up to Symmetry}\label{sec: avoidanceUpToSym}
%%%%%%%%%%%%%%%%%%%%%%%%%%%%%%%%%%%%%%%%%%%%%%%%%%%%%%%%%%%%%%%%%%%%%%%%%%%%%%%%%

The goal of this section is to establish a result we dub \emph{avoidance up to symmetry}. 
We begin by recalling definitions and establishing notation for (upper) order ideals in a Boolean poset. As we shall see, order ideals capture information about the columns of exponent matrices of squarefree monomial ideals, and the tools we develop in this section will be central to understanding which orbit classes of squarefree monomials do or do not appear in the Alexander duals of Sym-invariant chains of squarefree monomial ideals.

\subsection{Order Ideals}

\begin{definition}
Fix a finite set $M$ and denote by $2^M$ its Boolean lattice ordered by inclusion.
For any subposet $K$ of $2^M$ and $J\subset K$, say $J$ is an \textit{(upper) order ideal} if $J$ is closed under taking supersets, i.e., whenever $T\in J$ and $S\supset T$, one has $S\in J$.
\end{definition}

\begin{remark}
Unless stated otherwise, an ``order ideal'' is always assumed to be an upper order ideal.
\end{remark} 

For subsets $S$ and $T$ of any set, we denote  its difference set by $S - T$. For an element $t \in T$, we often write for simplicity $S - t$ instead of $S - \{t\}$. If $S$ and $T$ are disjoint we use $S \sqcup T$ to indicate a decomposition of $S \cup T$. 

\begin{notation}
For any $T\subset M$, let $T^C\coloneqq M-T$ denote the complement of $T$ in $M$. Given a subposet $K$ of $2^M$ and any subset $J\subset K$, we denote by $\overline{J}$ the set
$$\overline{J}\coloneqq\{T^C~\mid~ T\in J\}.$$
\end{notation}

We emphasize that $\overline{J}$ consists of the complements of all elements in $J$, it is \textit{not} the complement of $J$ in $2^M$, which we will typically denote by $2^M - J$.

Since the map $\phi: 2^M \ra 2^M$ such that $\phi(T) = T^C$ for any $T\in 2^M$ is a bijection, we have the following useful properties of order ideals.

\begin{lemma}\label{lem: subsets of 2^M}
    Let $I,J\subseteq 2^M$ with $I\subseteq J$. Then the following hold:
    \begin{enumerate}[(a)]
        \item $\overline{I}\subset\overline{J}$;
        \item $\overline{J-I}=\overline{J}-\overline{I}$.
    \end{enumerate}
\end{lemma}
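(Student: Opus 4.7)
The plan is to deduce both parts directly from the fact, already noted just before the lemma, that complementation $\phi \colon 2^M \to 2^M$, $\phi(T) = T^C$, is a bijection. Since $\overline{J}$ is by definition the image $\phi(J)$, both statements are instances of general facts about set-theoretic images under injective maps.

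For part (a), I would simply unfold the definition. Given $S \in \overline{I}$, write $S = T^C$ with $T \in I$. The hypothesis $I \subseteq J$ gives $T \in J$, so $S = T^C \in \overline{J}$. No further machinery is needed.

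For part (b), I would show the two inclusions separately, using injectivity of $\phi$ in one direction. For $\overline{J - I} \subseteq \overline{J} - \overline{I}$, take $S \in \overline{J - I}$, so $S = T^C$ for some $T \in J - I$. Then $T \in J$ gives $S \in \overline{J}$. If we had $S \in \overline{I}$, then $S = U^C$ for some $U \in I$; injectivity of complementation forces $U = T$, contradicting $T \notin I$. Hence $S \in \overline{J} - \overline{I}$. Conversely, if $S \in \overline{J} - \overline{I}$, write $S = T^C$ with $T \in J$; if $T \in I$ then $S = T^C \in \overline{I}$, contradiction, so $T \in J - I$ and $S \in \overline{J - I}$.

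I do not anticipate any real obstacle here; the lemma is a routine bookkeeping statement whose only content is that $T \mapsto T^C$ is a bijection of $2^M$ with itself. The main value of writing it out is to have a clean reference for later arguments in \Cref{sec: alexDualsOrderIdeals}, where the symbol $\overline{(\,\cdot\,)}$ is repeatedly applied to order ideals that arise from exponent vectors.
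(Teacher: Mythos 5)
Your proof is correct and matches the paper's intent exactly: the paper gives no written proof of this lemma, instead treating both parts as immediate consequences of the preceding remark that $T \mapsto T^C$ is a bijection of $2^M$, which is precisely the fact you invoke. Your write-up just fills in the routine details of that observation.
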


These simple properties imply the following lemma.

\begin{lemma}\label{lem: complement of order ideal}
    Let $K$ be a subposet of $2^M$. For a subset $J\subseteq K$, the following conditions are equivalent: 
    \begin{enumerate}[(a)]
        \item $J$ is an order ideal of $K$.
        \item $\overline{K}-\overline{J}$ is an order ideal of $\overline{K}$. 
    \end{enumerate}
\end{lemma}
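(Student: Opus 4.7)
The plan is to exploit the fact that the complementation map $\phi\colon 2^M \to 2^M$, $T \mapsto T^C$, is an inclusion-reversing bijection, so that it sends upper order ideals to lower order ideals and vice versa. Then the equivalence boils down to the elementary observation that the complement (inside a poset) of a lower order ideal is an upper order ideal. I would present the argument as two short direct implications rather than appealing to this general fact, because the statement is so compact.

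For the implication (a) $\Rightarrow$ (b), I would start with an arbitrary $T \in \overline{K} - \overline{J}$ and some $S \in \overline{K}$ with $S \supseteq T$, and write $T = A^C$, $S = B^C$ with $A, B \in K$. The condition $T \in \overline{K} - \overline{J}$ translates, using \Cref{lem: subsets of 2^M}(b), into $A \notin J$, and $S \supseteq T$ translates into $B \subseteq A$. If $B$ were in $J$, then the upper order ideal property of $J$ applied to $B \subseteq A$ in $K$ would force $A \in J$, a contradiction. Hence $B \notin J$, which is exactly $S \in \overline{K} - \overline{J}$.

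The converse (b) $\Rightarrow$ (a) is entirely parallel. Given $T \in J$ and $S \in K$ with $S \supseteq T$, suppose for contradiction that $S \notin J$. Then $S^C \in \overline{K} - \overline{J}$, and the containment $S \supseteq T$ translates into $T^C \supseteq S^C$ with $T^C \in \overline{K}$. By the upper order ideal property of $\overline{K} - \overline{J}$ in $\overline{K}$, we obtain $T^C \in \overline{K} - \overline{J}$, i.e., $T \notin J$, contradicting the choice of $T$.

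I do not anticipate any real obstacle here; the only thing to keep track of carefully is the bookkeeping between ``upper'' and ``lower'' when passing through $\phi$, and the fact that we are working inside the subposet $K$ (respectively $\overline{K}$) rather than all of $2^M$, so that the defining containments $S \supseteq T$ are required to stay inside the relevant subposet. The two small properties collected in \Cref{lem: subsets of 2^M} handle the set-theoretic identities cleanly.
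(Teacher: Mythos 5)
Your proof is correct and follows essentially the same route as the paper's: both pass through the complementation bijection, use the identity $\overline{K}-\overline{J}=\overline{K-J}$ from \Cref{lem: subsets of 2^M}, and argue the order-ideal property by contraposition. The only difference is that you write out the direction (b) $\Rightarrow$ (a) explicitly, whereas the paper dismisses it as "similar."
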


\begin{proof}
Assume $(a)$ is true. 
    Fix $S\in\overline{K}-\overline{J}$ and let $T\supset S$. By Lemma \ref{lem: subsets of 2^M}, $\overline{K}-\overline{J}=\overline{K-J}$ and since $S\in\overline{K}-\overline{J}$, it follows that $S^C\notin J$. Since $T^C\subset S^C$ and $J$ is an order ideal, we get $T^C\notin J$, and so $T\in \overline{K}-\overline{J}$, proving $(b)$.
    
    The argument that $(b)$ implies $(a)$ is similar.
\end{proof}

In the following, we use $\NN$ and $\NN_0$ to denote the set of positive and non-negative integers, respectively. 

\subsection{Avoidance up to symmetry}

In this subsection, we prove a combinatorial criterion which detects whether the images of an element $i$ in a set $N$ under two different maps $f$ and $g$ into a subposet of the Boolean lattice share elements ``up to symmetry''; that is, whether $f(i)\cap g(\sigma(i)) = \varnothing$ for some $\sigma\in \Sym(N)$. The next lemma will be crucial to prove this criterion.
\begin{lemma}\label{lem: equivalence of inequalities}
    Let $K$ be a subposet of $2^M$. Consider two maps
    $$K\to\NN_0, \quad T\mapsto k_T,$$
    $$\overline{K}\to \NN_0, \quad S\mapsto \ell_S$$
    satisfying $\sum_{T\in K}k_T=\sum_{T\in K}\ell_{T^C}$. Then the following conditions are equivalent:
    \begin{enumerate}[(a)]
        \item For every order ideal $J$ of $K$, one has $\sum_{T\in J} k_T\leq \sum_{T\in J}\ell_{T^C}$.
        \item For every order ideal $I$ of $\overline{K}$, one has $\sum_{S\in I}\ell_S\leq \sum_{S\in I}k_{S^C}$.
    \end{enumerate}
\end{lemma}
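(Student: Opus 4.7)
The plan is to exploit a bijection between the order ideals of $K$ and those of $\overline{K}$ supplied by \Cref{lem: complement of order ideal}, and then show that under this bijection, the inequality in (a) and the inequality in (b) differ by exactly the total balance $\sum_{T\in K} k_T = \sum_{T\in K} \ell_{T^C}$, hence are equivalent.

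More precisely, I would start by noting that, for any subset $J \subseteq K$, the identity $\overline{\overline{J}} = J$ and \Cref{lem: subsets of 2^M}(b) give $\overline{K - \overline{I}} = \overline{K} - I$ for any $I \subseteq \overline{K}$. Combined with \Cref{lem: complement of order ideal}, the assignments
\[
J \;\longmapsto\; I := \overline{K} - \overline{J}, \qquad I \;\longmapsto\; J := K - \overline{I}
\]
are mutually inverse bijections between the set of order ideals $J$ of $K$ and the set of order ideals $I$ of $\overline{K}$. This is the first step.

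Next, I would fix an order ideal $J$ of $K$ with corresponding order ideal $I = \overline{K} - \overline{J}$ of $\overline{K}$, and rewrite both sides of (b) in terms of $J$. Since reindexing $S = T^C$ gives $\sum_{S \in \overline{K}} \ell_S = \sum_{T \in K} \ell_{T^C}$ and $\sum_{S \in \overline{J}} \ell_S = \sum_{T \in J} \ell_{T^C}$, the hypothesis $\sum_{T\in K} k_T = \sum_{T\in K} \ell_{T^C}$ yields
\[
\sum_{S \in I} \ell_S = \sum_{T\in K}\ell_{T^C} - \sum_{T \in J} \ell_{T^C}, \qquad \sum_{S \in I} k_{S^C} = \sum_{T \in K} k_T - \sum_{T \in J} k_T.
\]
Subtracting these two quantities and using the balance condition, the inequality $\sum_{S \in I}\ell_S \leq \sum_{S \in I} k_{S^C}$ is seen to be equivalent to $\sum_{T \in J} k_T \leq \sum_{T \in J} \ell_{T^C}$, which is exactly the inequality in (a) for $J$. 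Since $J \leftrightarrow I$ is a bijection between order ideals on the two sides, the universally quantified statements (a) and (b) are equivalent, completing the proof.

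The argument is essentially bookkeeping; the only subtle point is that the bijection between order ideals really does use both parts of \Cref{lem: subsets of 2^M} together with \Cref{lem: complement of order ideal}, and one must remember that the balance hypothesis is what allows the ``complementary'' inequality to be converted into the original one. I do not anticipate any genuine obstacle beyond making sure the reindexing $S \leftrightarrow T^C$ is handled cleanly.
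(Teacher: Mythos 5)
Your proposal is correct and follows essentially the same route as the paper: both arguments use the complementation $J \leftrightarrow \overline{K-J}$ (equivalently $\overline{K}-\overline{J}$) together with \Cref{lem: complement of order ideal}, and both convert one inequality into the other by subtracting from the total $\sum_{T\in K}k_T=\sum_{T\in K}\ell_{T^C}$. The only cosmetic difference is that you phrase the correspondence explicitly as a mutually inverse bijection of order ideals, whereas the paper phrases it as an equivalence of conditions; the bookkeeping is identical.
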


\begin{proof}
    Set $n\coloneqq \sum_{T\in K} k_T=\sum_{T\in K}\ell_{T^C}$. Then for any order ideal $J$ of $K$, one has
    \begin{align*}
        \sum_{T\in J}k_T &= n-\sum_{T\in K-J}k_T,  \\
        \sum_{T\in J}\ell_{T^C} &= n-\sum_{T\in K-J}\ell_{T^C}.
    \end{align*}
    Hence, the inequality $\sum_{T\in J}k_T\leq \sum_{T\in J}\ell_{T^C}$ holds if and only if $\sum_{T\in K-J}\ell_{T^C}\leq \sum_{T\in K-J}k_T$ holds. Since $T\in K-J$ if and only if $T^C\in \overline{K-J}$, the latter inequality is equivalent to
    $$\sum_{S\in \overline{K-J}}\ell_S\leq\sum_{S\in\overline{K-J}}k_{S^C}.$$ 
    By Lemma \ref{lem: complement of order ideal}, $J$ is an order ideal of $K$ if and only if $\overline{K-J}$ is an order ideal of $\overline{K}$, and so  the equivalence of (a) and (b) is proven.
\end{proof}

\begin{theorem}\label{thm: combinatorial statement}
    Let $K$ be a nonempty subposet of $2^M$ and let $N$ be a finite set. Consider maps
    $$f:N\to K$$
    $$g:N\to\overline{K}.$$
     Then the following conditions are equivalent: 
    \begin{enumerate}[(a)]
        \item For every order ideal $J$ of $K$,
        $$\sum_{T\in J}
        \left|f^{-1}(T)\right|\leq 
        \sum_{T\in J}\left|g^{-1}(T^C)\right|.$$
        \item For every order ideal $I$ of $\overline{K}$,
        $$\sum_{S\in I}
        \left|g^{-1}(S)\right|\leq \sum_{S\in I}
        \left|f^{-1}(S^C)\right|.$$
        \item There exists a permutation $\sigma\in\Sym(N)$ such that $f(i)\cap g(\sigma(i))=\varnothing$ for every $i\in N$.
    \end{enumerate}
\end{theorem}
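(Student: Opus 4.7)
The plan is to establish (a) $\Leftrightarrow$ (b) and (a) $\Leftrightarrow$ (c) separately. The first equivalence is an immediate application of \Cref{lem: equivalence of inequalities} with $k_T \coloneqq |f^{-1}(T)|$ for $T \in K$ and $\ell_S \coloneqq |g^{-1}(S)|$ for $S \in \overline{K}$. Both $\sum_{T \in K} k_T$ and $\sum_{T \in K} \ell_{T^C}$ equal $|N|$, since the fibers of $f$ (resp.\ of $g$) partition $N$, so the hypothesis of that lemma is satisfied.

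The direction (c) $\Rightarrow$ (a) is the easier half of the main equivalence. Fix an order ideal $J$ of $K$. By \Cref{lem: complement of order ideal}, $\overline{J}$ is a lower order ideal of $\overline{K}$, i.e., closed under taking subsets within $\overline{K}$. For any $i \in f^{-1}(J)$, we have $f(i) \in J$, hence $f(i)^C \in \overline{J}$; since $g(\sigma(i)) \in \overline{K}$ and $g(\sigma(i)) \subseteq f(i)^C$, we conclude $g(\sigma(i)) \in \overline{J}$. Thus $\sigma$ restricts to an injection $f^{-1}(J) \hookrightarrow g^{-1}(\overline{J})$, giving
\[
\sum_{T \in J} |f^{-1}(T)| \; = \; |f^{-1}(J)| \; \le \; |g^{-1}(\overline{J})| \; = \; \sum_{T \in J} |g^{-1}(T^C)|.
\]

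For the converse (a) $\Rightarrow$ (c), the approach is to invoke Hall's marriage theorem on the bipartite graph $G$ with both left- and right-parts equal to $N$ and edge set $\{(i,j) : f(i) \cap g(j) = \varnothing\}$; a perfect matching in $G$ is precisely a permutation $\sigma$ as in (c). To verify Hall's condition $|N_G(A)| \ge |A|$ for every left subset $A$, observe that $G$-adjacency depends only on the value $f(i)$, so without loss of generality $A = f^{-1}(\mathcal{J})$ for some $\mathcal{J} \subseteq f(N)$. Let $J$ denote the order ideal of $K$ generated by $\mathcal{J}$. The key inclusion $g^{-1}(\overline{J}) \subseteq N_G(A)$ holds because, if $g(j) \in \overline{J}$, then $g(j)^C \in J$ is a superset of some $T \in \mathcal{J}$; since $T \in f(N)$, any $i \in f^{-1}(T) \subseteq A$ satisfies $f(i) = T \subseteq g(j)^C$, so $f(i) \cap g(j) = \varnothing$. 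Combining this with (a) applied to $J$ yields
\[
|A| \; \le \; |f^{-1}(J)| \; \le \; |g^{-1}(\overline{J})| \; \le \; |N_G(A)|,
\]
which is exactly Hall's condition.

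The principal difficulty lies in this final implication: one has to recognize that although condition (a) is stated only over order ideals of the ambient poset $K$, passing from an arbitrary $\mathcal{J} \subseteq f(N)$ to its order-ideal closure $J$ inflates the fiber count $|f^{-1}(\mathcal{J})|$ only up to $|f^{-1}(J)|$, while the sandwich $g^{-1}(\overline{J}) \subseteq N_G(A)$ ensures that (a) is strong enough to absorb this loss. The interplay between order ideals in $K$ and lower order ideals in $\overline{K}$ captured by \Cref{lem: complement of order ideal} is what makes both directions go through cleanly.
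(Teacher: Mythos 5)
Your proof is correct. The equivalence of (a) and (b) via \Cref{lem: equivalence of inequalities} and the injection argument for (c) $\Rightarrow$ (a) coincide with the paper's treatment, but your proof of (a) $\Rightarrow$ (c) takes a genuinely different route: where you invoke Hall's marriage theorem on the bipartite graph with edges $\{(i,j) : f(i)\cap g(j)=\varnothing\}$, the paper runs a self-contained induction on $|N|$, removing a single compatible pair $(i, \sigma(i))$ and, when some order ideal $J_0$ becomes tight, splitting $N$ into the two pieces $N'=f^{-1}(J_0)$ and $N''=f^{-1}(K-J_0)$ and applying the inductive hypothesis to each (using the already-established equivalence of (a) and (b) on the second piece). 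Your reduction of Hall's condition to sets of the form $A=f^{-1}(\mathcal J)$, the passage to the order-ideal closure $J$ of $\mathcal J$, and the sandwich $|A|\le|f^{-1}(J)|\le|g^{-1}(\overline J)|\le|N_G(A)|$ are all sound; the hypothesis $\mathcal J\subseteq f(N)$ is exactly what guarantees $g^{-1}(\overline J)\subseteq N_G(A)$, since every minimal element of $J$ is then realized as some $f(i)$ with $i\in A$. The Hall route is shorter and makes transparent that condition (a) is precisely the Hall/defect condition in disguise; the paper's induction has the advantage of being elementary and self-contained, and its tight-ideal case split foreshadows the decomposition techniques used in the later sections on minimal generators.
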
    
    \begin{proof}
        For every $T\in K$, set $k_T:=|f^{-1}(T)|$ and for every $S\in \overline{K}$, set $\ell_S:=|g^{-1}(S)|$. Then the assumption reads $\sum_{T\in K}k_T=\sum_{T\in K}\ell_{T^C}$. Lemma \ref{lem: equivalence of inequalities} shows that (a) and (b) are equivalent.
\smallskip
        
Next, we argue that         $(c)$ implies $(a)$.  Fix a permutation $\sigma\in\Sym(N)$ such that $f(i)\cap g(\sigma(i))=\varnothing$ for all $i\in N$. Let $J$ be an order ideal of $K$. Given $i\in N$, it follows from our assumption that $f(i)\subset g(\sigma(i))^C$. Hence, if $f(i)\in J$ then $g(\sigma(i))^C\in J$. Setting
        $$D:=\{i\in N:f(i)\in J\}$$
        $$C:=\{j\in N:g(j)\in\overline{J}\},$$
        we have an injective map
        $$D\hookrightarrow C, \quad i\mapsto\sigma(i).$$
        Hence $|D|\leq |C|$. But $|D|=\sum_{T\in J}k_T$ and $|C|=\sum_{T\in J}\ell_{T^C}$, and so we have proven that Condition $(a)$ is satisfied.
\smallskip
        
Now    we prove that  $(a)$ implies $(c)$ by induction on $|N|$. Note that replacing $g$ by $g\circ \tau$ for any $\tau\in\Sym(N)$ changes neither the assumption nor the assertion.
        
        First assume that $|N|=1$ and $N=\{i\}$. Let $T_0:=f(i)\in K$, and so $k_{T_0}=1$ and all other $k_T=0$. Let $J$ be the order ideal of $K$ that is generated by $T_0$. By our assumption, there exists some $S_0\in J$ such that $\ell_{S_0^C}\geq 1$. Since $S_0\in J$, we have $T_0\subseteq S_0$ and hence $T_0\cap S_0^C=\varnothing$. Moreover, $S_0^C=g(i)$. Taking $\sigma$ to be the identity permutation, we have $$f(i)\cap g(\sigma(i))=T_0\cap S_0^C=\varnothing,$$ as desired.
        
        Now assume that $|N|\geq 2$. If $f(N)=\{\varnothing\}$, then we are done. So assume there is a nonempty $T_0\in K$ such that $k_{T_0}>0$. Then by the same argument as above, there is some $S_0\in K$ such that $T_0\subseteq S_0$ and $\ell_{S_0^C}>0$. Fix an element $i\in f^{-1}(T_0)$. After possibly applying a permutation to $N$, the domain of $g$, one can assume that $f(i)=T_0$ and $g(i)=S_0^C$. Set $\widetilde{N}:= N-\{i\}$, and let $\tilde{f}, \tilde{g}$ be the restrictions of $f,g$ to $\widetilde{N}$, respectively. Let $\tilde{k}_T:=|\tilde{f}^{-1}(T)|$ and $\tilde{\ell}_S:=|\tilde{g}^{-1}(S)|$ for each $T\in K$ and each $S\in\overline{K}$. We now consider two cases.
        
        \textit{Case 1:} For every order ideal $J$ of $K$, one has
        $$\sum_{T\in J}\tilde{k}_T\leq \sum_{T\in J}\tilde{\ell}_{T^C}.$$
        Then, by induction, there is a permutation $\tilde{\sigma}\in\Sym(\widetilde{N})$ such that $\tilde{f}(j)\cap\tilde{g}(\sigma(j))=\varnothing$ for all $j\in\widetilde{N}$. Let $\sigma\in\Sym(N)$ be the permutation defined by
        $$\sigma(j)=\begin{cases} \tilde{\sigma}(j) & \text{if } j\in\widetilde{N} \\
        i & \text{if }j=i\end{cases}.$$
        Since $\tilde{\sigma}$ is a bijection from $\widetilde{N}$ to $\widetilde{N}$, the map $\sigma$ is a well-defined permutation of $N$. Furthermore, one has that $f(j)\cap g(\sigma(j))=T_0\cap S_0^C=\varnothing$ for all $j\in N$, which proves Condition $(c)$.
        
        \textit{Case 2:} There exists an order ideal $J_0$ of $K$ such that
        \begin{equation}\label{eq: tilde case 2}
        \sum_{T\in J_0}\tilde{k}_T > \sum_{T\in J_0}\tilde{\ell}_{T^C}.
        \end{equation}
        Note that $\tilde{k}_{T_0}=k_{T_0}-1$ and $\tilde{k}_T=k_T$ for all $T\neq T_0$. Similarly, $\tilde{\ell}_{S_0^C}=\ell_{S_0^C}-1$ and $\tilde{\ell}_S=\ell_S$ for all  $S\neq S_0^C$. Since $\sum_{T\in J_0} k_T \le \sum_{T\in J_0}\ell_{T^C}$ by Condition $(a)$, in order for (\ref{eq: tilde case 2}) to occur, we must have had
        \begin{equation}\label{eq: equal sum J_0}
        \sum_{T\in J_0} k_T=\sum_{T\in J_0}\ell_{T^C}
        \end{equation}
        and $T_0\in K-J_0$, $S_0\in J_0$.
        Define subsets $N',N''$ of $N$ as follows:
        \begin{align*}
            N'&:=\{j\in N \; \mid \; f(j)\in J_0\}, \\
            N'' &:= \{j\in N \; \mid \; f(j)\in K-J_0\}. 
        \end{align*}
        Then we get the set partition $N=N'\sqcup N''$. Note that $i\in N''$, since $f(i)=T_0$ and $T_0\in K-J_0$. Hence $N''$ is nonempty. Moreover, combining Equation \eqref{eq: equal sum J_0} with the assumption that $\sum_{T\in K}k_T=\sum_{T\in K}\ell_{T^C}$, we conclude that
        $$|N''|=\sum_{T\in K-J_0}k_T=\sum_{T\in K-J_0}\ell_{T^C}<|N|.$$
        The strict inequality holds since $S_0\in J_0$ and $\ell_{S_0^C}\geq 1$. Hence, both $N'$ and $N''$ are proper, nonempty subsets of $N$. Furthermore, the map $f$ induces maps $f' \colon N'\to J_0$ and $f'' \colon N''\to K-J_0$. 

Now use the map $g$ to define subsets $M', M''$ of $N$ as follows:
        \begin{align*}
            M'&:=\{j\in N \; \mid \;  g(j)\in \overline{J_0}\}, \\
            M'' &:= \{j\in N \; \mid \; g(j)\in \overline{K}- \overline{J_0}\}. 
        \end{align*}
Thus, $M' \sqcup M''$ is a partition of $N$ and $|M' | = \sum_{T \in J_0} \ell_{T^C}$.  Combined with Equation \eqref{eq: equal sum J_0}, we get $|M'| = |N'|$, and so $|M''| = |N''|$. Hence, there is a bijection $\sigma \colon N \to N$ satisfying $\sigma (M') = N'$ and $\sigma (M'') = N''$. It follows that after applying $\sigma^{-1}$ to the domain of $g$, we obtain a map, call it again $g$, with    $g(N')\subseteq \overline{J_0}$ and $g(N'')\subseteq \overline{K}-\overline{J_0}$.  Thus,  $g$ induces maps $g' \colon N'\to\overline{J_0}$ and $g'' \colon N''\to\overline{K}-\overline{J_0}$. Our goal is to apply the induction hypothesis on the pairs of maps $f',g'$ and $f'',g''$.
        
        First consider the maps $f':N'\to J_0$ and $g':N'\to \overline{J_0}$. Since $f^{-1}(T)=(f')^{-1}(T)$ whenever $T\in J_0$ and $g^{-1}(T^C)=(g')^{-1}(T^C)$ whenever $T\in J_0$, Equation \eqref{eq: equal sum J_0} implies
        $$\sum_{T\in J_0}\left|(f')^{-1}(T)\right|=\sum_{T\in J_0} \left|(g')^{-1}(T^C)\right|.$$
        Moreover, since any order ideal $J$ of $J_0$ is also an order ideal of $K$, we obtain 
        $$\sum_{T\in J}\left|(f')^{-1}(T)\right| = \sum_{T\in J}|f^{-1}(T)|\leq \sum_{T\in J}|g^{-1}(T^C)|=\sum_{T\in J}\left|(g')^{-1}(T^C)\right|.$$
        Hence, the induction hypothesis applies to $f', g'$, and so there exists a permutation $\sigma'\in\Sym(N')$ such that for each $j\in N'$, we have $f'(j)\cap g'(\sigma'(j))=\varnothing$.
        
        Now consider $f'' \colon N''\to K-J_0$ and $g'' \colon N''\to\overline{K}-\overline{J_0}$. By Lemmas  \ref{lem: subsets of 2^M} and \ref{lem: complement of order ideal}, we know that $\overline{K}-\overline{J_0}=\overline{K-J_0}$ is an order ideal of $\overline{K}$. It follows from our assumptions on $f$ and $g$ and from Equation \eqref{eq: equal sum J_0}, that
        $$\sum_{T\in K-J_0}\left|(f'')^{-1}(T)\right|=\sum_{T\in K-J_0}\left|(g'')^{-1}(T^C)\right|.$$
        Given an order ideal $I$ of $\overline{K}-\overline{J_0}$, it is also an order ideal of $\overline{K}$. Hence by equivalence of (a) and (b) we have
        $$\sum_{S\in I}\left|(g'')^{-1}(S)\right| = \sum_{S\in I} |g^{-1}(S)| \leq \sum_{S\in I} |f^{-1}(S^C)| = \sum_{S\in I} \left| (f'')^{-1}(S^C)\right|.$$
        Thus, the induction hypothesis applies to $f'', g''$, and so there exists a permutation $\sigma''\in \Sym(N'')$ such that $f''(j)\cap g''(\sigma''(j))=\varnothing$ for every $j\in N''$.
        
        Finally, define a permutation $\sigma\in \Sym(N)$ by 
        $$\sigma(j)=\begin{cases}\sigma'(j) & \text{if } j\in N' \\
        \sigma''(j) & \text{if }j\in N''\end{cases}.$$
        Then $\sigma$ is the desired permutation which satisfies Condition (c) of the theorem.
    \end{proof}

%%%%%%%%%%%%%%%%%%%%%%%%%%%%%%%%%%%%%%%%%%%%%%%%%%%%%%%%%%%%
\section{$\Sym$-invariant Chains and Alexander Duality}\label{sec: background}
%%%%%%%%%%%%%%%%%%%%%%%%%%%%%%%%%%%%%%%%%%%%%%%%%%%%%%%%%%%%

We now recall some concepts and results that will be used throughout this paper. For further information on combinatorial commutative algebra we refer to \cite{MS}. 

\subsection{Invariant Chains of Ideals}
\begin{notation}\label{not: main} We denote by $\NN$ the set of positive integers.  Fix $c\in \NN$ and let $\kk$ be any field. Set $R = \kk[x_{i,j} \mid i\in [c],j\geq 1]$ to be a polynomial ring in infinitely many variables. One may view $R$ as the colimit of the ascending chain of polynomial rings
$$
R_1\subseteq R_2\subseteq \dots \subseteq R_n\subseteq \dots
$$
where  $R_n = \mathbf{k}[x_{i,j} \mid i\in [c],  j\in [n]]$ for $n \in \NN$. 

Set $\Sym(\infty) = \bigcup_{n\geq 1} \Sym(n)$ to be the infinite symmetric group, where $\Sym(n)$ is the symmetric group on $\{1,\dots,n\}$. Embedding $\Sym (n)$ as the stabilizer of $n+1$ in $\Sym(n+1)$, one may view $\Sym(\infty)$ as the colimit of the ascending chain of finite groups
$$
\Sym(1) \subseteq \Sym(2) \subseteq \dots \subseteq \Sym(n)\subseteq \dots .
$$
\end{notation}

We always consider the action of $\Sym(\infty)$ on $R$ that is induced by $\sigma\cdot x_{i,j} = x_{i,\sigma(j)}$ for $\sigma\in\Sym(\infty)$, and this action restricts to an action of $\Sym(n)$ on the polynomial ring $R_n$.

\begin{definition}\label{def: symInvariant} Adopt Notation \ref{not: main}. An ideal $I\subseteq R$ is said to be \emph{$\Sym(\infty)$-invariant} (or \emph{$\Sym$-invariant} for short) if $\sigma(f)\in I$ for any $f\in I$ and any $\sigma\in\Sym(\infty)$. A \emph{$\Sym(\infty)$-invariant chain} is a sequence $(I_n)_{n\in\NN}$ of ideals $I_n\subseteq R_n$ satisfying
$$
\Sym(n)(I_m) = \left\{\sigma(f)\mid f\in I_m, \ \sigma\in \Sym(n)\right\} \subseteq I_n \text{ whenever } m\leq n.
$$
\end{definition}

The colimit $I = \lim I_n$ of any such chain is isomorphic to the union of the extension ideals $I_n$ in $R$. Thus, $I$ is a $\Sym (\infty)$-invariant ideal of $R$. 
By results of Cohen, Aschenbrenner-Hillar, and Hillar-Sullivant, \cite{C, AH, HS} the ring $R$ is \emph{$\Sym(\infty)$-Noetherian}, meaning that any $\Sym(\infty)$-invariant ideal is generated by finitely many $\Sym(\infty)$-orbits. This gives one the ability to study $\Sym$-invariant ideals via orbit classes of generators.

\subsection{Alexander duality and Stanley-Reisner complexes}
We will restrict our attention to the case of $\Sym$-invariant squarefree monomial ideals, also known as Stanley-Reisner ideals. This is a particularly interesting class of ideals because homological information about these chains of ideals also yields topological information about the associated chains of simplicial complexes with $\Sym$-invariant facets. We begin by recalling some essential tools in the theory of squarefree monomial ideals, such as Alexander duality and Stanley-Reisner theory.

\begin{notation}
Let $S = \mathbf{k}[x_1,\dots, x_n]$ be a polynomial ring over a field $\mathbf{k}$ in $n$ variables. The monomial $\mathbf{X} = x_1 x_2 \cdots x_n$ denotes the product of all the variables.
Given a simplicial complex $\Delta$ on $n$ vertices, we will frequently refer to faces of simplicial complexes as monomials $\mm\in S$ rather than specifying the subset of $[n]$ corresponding to the face.
If $\mm$ is a monomial in $S$, write $P_\mm$ for the prime ideal generated by the variables dividing $\mm$, that is, $P_\mm \coloneqq \langle x_i \mid x_i \text{ divides } \mm\rangle$.
\end{notation}

\begin{definition}
Let $I$ be a squarefree monomial ideal in a polynomial ring $S$. Then the \textit{Stanley-Reisner complex} of $I$, denoted $\Delta(I)$, is the simplicial complex consisting of squarefree monomials \textit{not} in $I$,
$$
\Delta_I = \{\mm \text{ a squarefree monomial in } S \mid \mm\notin I\}.
$$
For a simplicial complex $\Delta$, the \textit{Stanley-Reisner ideal} of $\Delta$ is the squarefree monomial ideal generated by the non-faces of $\Delta$,
$$
I_\Delta = \langle \mm \text{ a monomial in } S \mid \mm\notin\Delta\rangle.
$$
\end{definition}

It is well-known that the above constructions give bijections between the set of squarefree monomial ideals of $S$ and the set of  simplicial complexes on the vertex set $[n]$.

\begin{definition}
If $\Delta$ is a simplicial complex on $[n]$, the \textit{Alexander dual} of $\Delta$, denoted by $\Delta^\vee$, is the simplicial complex with faces $\{ \frac{\mathbf{X}}{\mm} \mid \mm\notin \Delta\}$. In particular, the facets of $\Delta^\vee$ are exactly the complements of the minimal non-faces of $\Delta$.

If $I$ is a squarefree monomial ideal, then the \textit{Alexander dual} of $I$ is
$$
I^\vee \coloneqq \langle \mm \mid P_\mm \in \Ass(I)\rangle,
$$
where $\Ass(I)$ denotes the set of associated primes of $I$. In this way, associated primes of $I$ correspond to the generators of $I^\vee$.
\end{definition}

The two concepts of Alexander duals are compatible because $I_{\Delta}^{\vee}$ is the Stanley-Reisner ideal of $\Delta^{\vee}$. 

The following fact follows from tracing through the definitions of Alexander duals. It is a useful criterion for translating between generators of the Alexander dual of an ideal and the facets of its associated Stanley-Reisner complex.

\begin{fact}\label{fact: alexDualComplements}
Let $\Delta$ be a simplicial complex. Then the facets of $\Delta^\vee$ are the monomials $\frac{\mathbf{X}}{\mm}$ where $\mm$ ranges over the monomial minimal generators of $I_\Delta$. Analogously, if $I$ is a squarefree monomial ideal, then the generators of $I^\vee$ are the monomials $\frac{\mathbf{X}}{F}$, where $F$ ranges over the facets of $\Delta(I)$.
\end{fact}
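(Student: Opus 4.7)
The plan is to unfold both halves of the statement directly from the definitions, relying on a single order-theoretic observation: the involution $\mm \mapsto \mathbf{X}/\mm$ on squarefree monomials reverses the divisibility order, so that minimal elements on one side correspond to maximal elements on the other.

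For the first assertion, I would start from the definition, which identifies the faces of $\Delta^\vee$ with monomials of the form $\mathbf{X}/\mm$ for squarefree $\mm \notin \Delta$; equivalently, for $\mm \in I_\Delta$. Since $\mathbf{X}/\mm$ divides $\mathbf{X}/\mm'$ if and only if $\mm'$ divides $\mm$, the facets (i.e., the maximal faces) of $\Delta^\vee$ correspond bijectively to those monomials $\mm$ which are \emph{minimal under divisibility} inside $I_\Delta$. But these are exactly the monomial minimal generators of $I_\Delta$, giving the first statement.

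For the second assertion, I would put $\Delta = \Delta(I)$, so that $I = I_\Delta$, and invoke the standard primary decomposition of a Stanley-Reisner ideal,
\[
I_\Delta \; = \; \bigcap_{F \in \text{facets}(\Delta)} P_{[n] \setminus F},
\]
which is easily verified by checking that a squarefree monomial $\mm$ lies in $I_\Delta$ exactly when its support fails to be contained in any facet of $\Delta$. Consequently, $\Ass(I) = \{\, P_{[n]\setminus F} \mid F \text{ a facet of } \Delta(I)\,\}$, and for each such facet the unique squarefree monomial $\mm$ satisfying $P_\mm = P_{[n]\setminus F}$ is precisely $\mathbf{X}/F$. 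Substituting into the definition $I^\vee = \langle \mm \mid P_\mm \in \Ass(I)\rangle$ yields the claim.

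No serious obstacle is anticipated: the statement is labelled a Fact precisely because it is a bookkeeping consequence of the definitions together with the order-reversing nature of $\mm \mapsto \mathbf{X}/\mm$ and the standard primary decomposition of a Stanley-Reisner ideal. The single point requiring some care is the mild notational ambiguity between a facet $F$ viewed as a subset of $[n]$ and as the squarefree monomial $\prod_{i \in F} x_i$; under this identification one has $\mathbf{X}/F = \prod_{i \notin F} x_i$, whose prime is indeed $P_{[n]\setminus F}$.
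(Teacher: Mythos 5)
Your proof is correct; the paper offers no argument for this Fact beyond the remark that it follows by tracing through the definitions, and your unfolding --- the order-reversal of divisibility under $\mm \mapsto \mathbf{X}/\mm$ for the first half, and the standard minimal-prime decomposition $I_\Delta = \bigcap_{F} P_{[n]\setminus F}$ over facets for the second --- is exactly that tracing. The only point left implicit is that the monomials $\mathbf{X}/F$ are the \emph{minimal} generators of $I^\vee$, which follows since distinct facets are incomparable under inclusion, so none of their complements divides another.
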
 

Note that any monomial ideal $I$ has a unique minimal generating set $G(I)$ consisting only of monomials. For brevity, we often refer to an element of $G(I)$ simply as a minimal generator of $I$. 

Using the minimal generating set of a squarefree monomial ideal, one can characterize monomials in its Alexander dual more directly. This is well-known and follows, for example, from \cite[Corollary 5.25]{MS}. 

\begin{fact}
      \label{fact:mon in dual}
Consider an ideal $I \subset S$ that is minimally generated by squarefree monomials $\mm_1,\ldots,\mm_t$. Then a squarefree monomial $\mm \in S$ is not in $I^{\vee}$ if and only if there exists $1\leq i\leq t$ such that the monomial $\mm \cdot \mm_i$ is squarefree. 
\end{fact}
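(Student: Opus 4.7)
The plan is to reformulate the claim in terms of Stanley-Reisner complexes and read off the result from \Cref{fact: alexDualComplements}. Identify each squarefree monomial in $S$ with its support, so that $\mm$ corresponds to $A \subseteq [n]$ and each generator $\mm_i$ corresponds to $B_i \subseteq [n]$. Under this identification, the condition ``$\mm \cdot \mm_i$ is squarefree'' is equivalent to $A \cap B_i = \varnothing$, equivalently $A \subseteq [n] - B_i$.

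Next I would translate membership in the Alexander dual to non-membership in its Stanley-Reisner complex. The squarefree monomial $\mm$ fails to lie in $I^{\vee}$ precisely when $\mm$ is a face of $\Delta(I^{\vee})$. Since the excerpt notes that $I_{\Delta}^{\vee}$ is the Stanley-Reisner ideal of $\Delta^{\vee}$, one has $\Delta(I^{\vee}) = \Delta(I)^{\vee}$. By \Cref{fact: alexDualComplements}, the facets of $\Delta(I)^{\vee}$ are exactly the monomials $\frac{\mathbf{X}}{\mm_i}$, whose supports are the complements $[n] - B_i$. A squarefree monomial lies in $\Delta(I)^{\vee}$ if and only if it divides some facet, i.e., its support is contained in $[n] - B_i$ for some $i$.

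Combining these translations, $\mm \notin I^{\vee}$ is equivalent to the existence of $i$ with $A \subseteq [n] - B_i$, which in turn is equivalent to $\mm \cdot \mm_i$ being squarefree for some $i$. There is no real obstacle here: the argument is essentially a dictionary check between the two incarnations of Alexander duality. The only thing to verify carefully is that the two notions of Alexander dual agree on the nose (so that $\Delta(I^{\vee}) = \Delta(I)^{\vee}$), which is exactly the compatibility statement sandwiched between \Cref{fact: alexDualComplements} and the fact under consideration. Once that is in hand, the equivalence follows by matching complements.
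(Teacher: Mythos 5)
Your argument is correct: the paper itself offers no proof of this fact (it only cites Miller--Sturmfels), and your dictionary between the two incarnations of Alexander duality, via \Cref{fact: alexDualComplements} and the compatibility $\Delta(I^{\vee})=\Delta(I)^{\vee}$, is exactly the intended justification. One could shorten it slightly by using the definition of $\Delta^{\vee}$ directly ($\mm\notin I^{\vee}$ iff $\mm\in\Delta(I)^{\vee}$ iff $\tfrac{\mathbf{X}}{\mm}\in I$ iff some $\mm_i$ divides $\tfrac{\mathbf{X}}{\mm}$), but your route through the facets is equally valid.
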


\subsection{Alexander Duals of Sym-Invariant Chains}\label{sect:AlexDualChain}

We consider the following question. 

\begin{question}
Given a $\Sym$-invariant chain $(I_n)_{n\in\NN}$ of ideals with $I_n\subseteq R_n$ generated by $\Sym$-orbits of squarefree monomials, what are the generators of its Alexander dual $I_n^\vee$ (or, by taking complements, the facets of their associated Stanley-Reisner complexes $\Delta(I_n)$)?
\end{question}

We will observe that the Alexander dual of $I_n$ is also a $\Sym$-invariant ideal, implying that we can study this question by aiming to understand monomial orbit generators of $I_n^\vee$. This allows us to refine the above question. 

We call a simplicial complex  $\Delta$ on vertex set $[n]$ \emph{$\Sym$-invariant} if $F\in \Delta$ implies $\sigma(F)\in \Delta$ for all $\sigma \in \Sym(n)$. In particular, $F$ and $\sigma(F)$ have the same dimension in this case. The next proposition relates $\Sym$-invariant complexes and ideals.

\begin{prop}\label{prop: symmetricFaces} 
Let $\Delta$ be a simplicial complex on vertex set $[n]^c$. Then $\Delta$ is $\Sym(n)$-invariant if and only if $I_{\Delta}$ is $\Sym(n)$-invariant.
\end{prop}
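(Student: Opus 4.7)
The plan is to use the $\Sym(n)$-equivariant bijection between squarefree monomials in $R_n$ and subsets of $[n]^c$. Explicitly, under $F \mapsto \mm_F := \prod_{j_i \in F} x_{i,j}$, the vertex action $\sigma \cdot j_i = (\sigma(j))_i$ translates under $\sigma \cdot x_{i,j} = x_{i,\sigma(j)}$ to $\sigma(\mm_F) = \mm_{\sigma(F)}$. The proof will then rest on the standard characterization of $I_\Delta$: a monomial $\mm \in R_n$ lies in $I_\Delta$ if and only if its support (viewed as a subset of $[n]^c$) contains a non-face of $\Delta$; equivalently, the minimal squarefree generators of $I_\Delta$ are exactly the monomials $\mm_N$ as $N$ ranges over the minimal non-faces of $\Delta$.

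For the forward direction, I would assume that $\Delta$ is $\Sym(n)$-invariant and show $\sigma(\mm) \in I_\Delta$ for every monomial $\mm \in I_\Delta$ and every $\sigma \in \Sym(n)$; since $I_\Delta$ is a monomial ideal, this suffices. I would pick a minimal non-face $N$ of $\Delta$ with $\mm_N$ dividing $\mm$. Then $\sigma(\mm_N) = \mm_{\sigma(N)}$ divides $\sigma(\mm)$, and $\sigma(N)$ is again a non-face (otherwise $N = \sigma^{-1}(\sigma(N))$ would be a face by the $\Sym(n)$-invariance of $\Delta$), so $\mm_{\sigma(N)} \in I_\Delta$ and therefore $\sigma(\mm) \in I_\Delta$.

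For the converse, I would assume $I_\Delta$ is $\Sym(n)$-invariant and take any $F \in \Delta$ together with $\sigma \in \Sym(n)$. Then $\mm_F \notin I_\Delta$. If $\mm_{\sigma(F)} = \sigma(\mm_F)$ were in $I_\Delta$, then applying $\sigma^{-1}$ and using invariance of $I_\Delta$ would give $\mm_F = \sigma^{-1}(\sigma(\mm_F)) \in I_\Delta$, a contradiction. Hence $\mm_{\sigma(F)} \notin I_\Delta$, meaning $\sigma(F) \in \Delta$.

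No serious obstacles are anticipated: the argument is essentially a dictionary between the combinatorial language of faces and the algebraic language of squarefree monomials, and the $\Sym(n)$-equivariance of the correspondence $F \leftrightarrow \mm_F$ reduces each implication to a one-line observation. The only small care needed is in the forward direction, where one must pass to a minimal non-face generator $\mm_N \mid \mm$ rather than arguing directly with the (possibly non-squarefree) $\mm$ itself, in order to apply the invariance hypothesis on faces of $\Delta$.
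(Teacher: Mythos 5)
Your proof is correct and follows essentially the same route as the paper: the equivariant dictionary between faces of $\Delta$ and squarefree monomials, with each implication handled by applying $\sigma^{-1}$ (or passing to a non-face generator) and invoking the invariance hypothesis. You in fact spell out more carefully the direction the paper dismisses as "similar" — in particular the reduction to a minimal non-face generator dividing a possibly non-squarefree $\mm$ — which is a sound refinement rather than a different method.
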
                           

\begin{proof}
First suppose that $I_\Delta$ is $\Sym(n)$-invariant. Let $F\in \Delta$ and assume by contradiction that $\sigma(F)\notin \Delta$, i.e., $\sigma(F)\in I_\Delta$. Since $I_\Delta$ is $\Sym$-invariant, it follows that $F\in I_\Delta$ and hence $F\notin \Delta$ which yields a contradiction. The other direction follows by a similar argument.
\end{proof}

The next proposition shows that $\Sym$-invariance is preserved under taking Alexander duals.

\begin{prop}\label{prop:alexDualComplex}
A simplicial complex $\Delta$ is $\Sym(n)$-invariant if and only if $\Delta^\vee$ is $\Sym(n)$-invariant.
\end{prop}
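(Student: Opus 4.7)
The plan is to reduce the equivalence to a single elementary observation: the action of $\Sym(n)$ on the vertex set $V = [n]^c$ commutes with complementation in $V$. Concretely, for any $\sigma\in\Sym(n)$ and any $F\subseteq V$, one has $\sigma(V \setminus F) = V \setminus \sigma(F)$, since $\sigma$ is a bijection of $V$. Combined with the characterization that $F$ is a face of $\Delta^\vee$ if and only if $V\setminus F$ is not a face of $\Delta$, this will deliver both directions immediately.

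For the forward direction, assume $\Delta$ is $\Sym(n)$-invariant and pick $F\in\Delta^\vee$ together with $\sigma\in\Sym(n)$. By definition of the Alexander dual, $V\setminus F\notin\Delta$. We wish to show $\sigma(F)\in\Delta^\vee$, i.e.\ $V\setminus \sigma(F)\notin\Delta$. By the observation above, $V\setminus\sigma(F) = \sigma(V\setminus F)$. If this set were in $\Delta$, then applying $\sigma^{-1}$ (which $\Delta$ is also closed under, by $\Sym(n)$-invariance) would yield $V\setminus F\in\Delta$, contradicting $F\in\Delta^\vee$. Hence $\sigma(F)\in\Delta^\vee$, proving that $\Delta^\vee$ is $\Sym(n)$-invariant.

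For the converse, the cleanest route is to invoke the involutive property $(\Delta^\vee)^\vee = \Delta$: applying the forward direction to the complex $\Delta^\vee$ shows that if $\Delta^\vee$ is $\Sym(n)$-invariant, then so is $(\Delta^\vee)^\vee = \Delta$. Alternatively, one can bypass this by combining \Cref{prop: symmetricFaces} with \Cref{fact: alexDualComplements}, noting that $I_{\Delta^\vee}= I_\Delta^\vee = \langle \mm \mid P_{\mm}\in \Ass(I_{\Delta})\rangle$, and that $\Sym(n)$ permutes the associated primes of a $\Sym(n)$-invariant ideal; this reduces the statement to the already-established ideal-theoretic version.

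There is essentially no serious obstacle here: the only point requiring care is the bookkeeping for the vertex set $V = [n]^c$ and the action $\sigma\cdot j_i = \sigma(j)_i$, but this action is still a bijection of $V$, so the key identity $\sigma(V\setminus F)=V\setminus\sigma(F)$ remains valid verbatim.
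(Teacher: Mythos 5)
Your argument is correct and is essentially the paper's own proof: both establish the forward direction from the identity $\sigma(V\setminus F)=V\setminus\sigma(F)$ (the paper phrases this as $\frac{\mathbf{X}}{\sigma(F)}\in\Delta$ implying $\frac{\mathbf{X}}{F}\in\Delta$ under $\Sym(n)$-invariance) and both obtain the converse from $(\Delta^\vee)^\vee=\Delta$. No issues.
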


\begin{proof} 
Let $\Delta$ be $\Sym(n)$-invariant and let $V$ be the vertex set of $\Delta$. Assume by contradiction that there exist $F\in \Delta^\vee$ and $\sigma\in \Sym(n)$ with $\sigma(F)\notin \Delta^\vee$. Interpreting faces of $\Delta$ as monomials, it follows that $\frac{\textbf{X}}{\sigma(F)}\in \Delta$. Since $\Delta$ is $\Sym(n)$-invariant, we have that $\frac{\textbf{X}}{F}\in \Delta$, i.e., $F\notin \Delta^\vee$, which is a contradiction. The other direction follows from the first one since $(\Delta^\vee)^\vee=\Delta$.
\end{proof}

Combining \Cref{prop: symmetricFaces} and \Cref{prop:alexDualComplex} gives the following result.
\begin{prop}\label{prop: alexDual} A squarefree monomial ideal $I$ of $R_n$ is $\Sym(n)$-invariant if and only if its Alexander dual $I^\vee$ is $\Sym(n)$-invariant.
\end{prop}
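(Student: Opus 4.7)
The plan is to translate the statement about the ideal $I$ into one about its associated simplicial complex via the Stanley--Reisner correspondence, apply the complex-version statement already proved in \Cref{prop:alexDualComplex}, and then translate back. Concretely, I would use the identifications $I = I_{\Delta(I)}$ and $I^{\vee} = I_{\Delta(I)^{\vee}}$ provided by the Stanley--Reisner dictionary and the fact (recalled after \Cref{fact: alexDualComplements}) that the Alexander dual of the Stanley--Reisner ideal is the Stanley--Reisner ideal of the Alexander dual complex.

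The chain of equivalences I would write down is the following: $I$ is $\Sym(n)$-invariant iff $\Delta(I)$ is $\Sym(n)$-invariant (by \Cref{prop: symmetricFaces} applied to $I = I_{\Delta(I)}$), iff $\Delta(I)^{\vee}$ is $\Sym(n)$-invariant (by \Cref{prop:alexDualComplex}), iff $I_{\Delta(I)^{\vee}} = I^{\vee}$ is $\Sym(n)$-invariant (by \Cref{prop: symmetricFaces} applied once more, this time to the complex $\Delta(I)^{\vee}$). Composing these biconditionals yields the claim.

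There is essentially no obstacle here, since \Cref{prop: symmetricFaces} and \Cref{prop:alexDualComplex} do all the work; the only thing to verify carefully is that the Alexander dual of $I$ in the ideal-theoretic sense really does coincide with the Stanley--Reisner ideal of $\Delta(I)^{\vee}$, so that the outer and inner translations match up. This compatibility is standard and is already invoked immediately after the definition of the Alexander dual in the excerpt, so the proof reduces to writing the three-step equivalence cleanly.
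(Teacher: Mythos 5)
Your proposal is correct and matches the paper exactly: the paper derives this proposition by combining \Cref{prop: symmetricFaces} and \Cref{prop:alexDualComplex}, using the stated compatibility $I_{\Delta}^{\vee} = I_{\Delta^{\vee}}$, which is precisely your three-step chain of equivalences.
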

%
%\begin{proof} By \Cref{prop: symmetricFaces}, $I$ is $\Sym(n)$-invariant if and only if $\Delta (I)$ is $\Sym (n)$-invariant. Thus, its Alexander dual $\Delta (I_n)^{\vee}$ is $\Sym (n)$-invariant as well. It follows that the Stanley-Reisner ideal $I_{\Delta (I_n)^{\vee}} = I_n^{\vee}$ is $\Sym$-invariant. 
%\end{proof}

The following useful fact is likely known to experts. 

\begin{prop}\label{prop: reverse chain}
%Let $R$ be a polynomial ring and let 
If $I,J$ are squarefree monomial ideals of $S$ with $I\subseteq J$, then $J^\vee\subseteq I^\vee$.
\end{prop}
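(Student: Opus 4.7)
The plan is to prove the containment $J^\vee \subseteq I^\vee$ by taking an arbitrary squarefree monomial $\mm \in J^\vee$ and showing it must lie in $I^\vee$. The natural tool is \Cref{fact:mon in dual}, which characterizes membership in the Alexander dual purely in terms of the minimal monomial generators: $\mm \in K^\vee$ if and only if $\mm \cdot \kk$ fails to be squarefree for every minimal generator $\kk$ of $K$, i.e., $\mm$ and $\kk$ share a variable. Thus the whole proof reduces to a bookkeeping comparison between the minimal generating sets $G(I)$ and $G(J)$.

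First, I would write $G(I) = \{\mm_1,\ldots,\mm_t\}$ and $G(J) = \{\nn_1,\ldots,\nn_s\}$. The key structural input is that $I \subseteq J$ forces, for each $\mm_i \in G(I)$, the existence of some $\nn_{j(i)} \in G(J)$ with $\nn_{j(i)} \mid \mm_i$; this is a standard fact about monomial ideals. Now fix $\mm \in J^\vee$. By \Cref{fact:mon in dual}, $\mm \cdot \nn_j$ is not squarefree for every $j \in [s]$, which means $\mm$ and $\nn_j$ have at least one variable in common. Applying this with $j = j(i)$ for any chosen $i$, any variable shared by $\mm$ and $\nn_{j(i)}$ is also a variable of $\mm_i$, because $\nn_{j(i)} \mid \mm_i$. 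Hence $\mm$ and $\mm_i$ share a variable, so $\mm \cdot \mm_i$ is not squarefree. Since $i$ was arbitrary, another application of \Cref{fact:mon in dual} gives $\mm \in I^\vee$.

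There is really no serious obstacle here; the only thing to be careful about is to invoke the right characterization of the Alexander dual. An alternative route would proceed via the primary decomposition $K^\vee = \bigcap_{\kk \in G(K)} P_{\kk}$: the divisibility $\nn_{j(i)} \mid \mm_i$ gives $P_{\nn_{j(i)}} \subseteq P_{\mm_i}$, whence $J^\vee = \bigcap_j P_{\nn_j} \subseteq P_{\nn_{j(i)}} \subseteq P_{\mm_i}$ for every $i$, and intersecting over $i$ yields $J^\vee \subseteq I^\vee$. Either approach is a short two- or three-line argument once the appropriate reformulation of the dual is in hand.
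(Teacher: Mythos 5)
Your argument is correct, but it takes a different route from the paper's. The paper works on the simplicial-complex side: from $I\subseteq J$ it deduces that $\Delta(J)$ is a subcomplex of $\Delta(I)$, so every facet $F_J$ of $\Delta(J)$ is contained in some facet $F_I$ of $\Delta(I)$, and then \Cref{fact: alexDualComplements} converts this containment of facets into divisibility of the corresponding minimal generators of the duals: the complement of $F_I$ lies in $I^\vee$ and divides the complement of $F_J$. You instead stay entirely on the monomial side, combining \Cref{fact:mon in dual} with the standard observation that each minimal generator $\mm_i$ of $I$ is divisible by some minimal generator $\nn_{j(i)}$ of $J$; the shared-variable condition then transfers from $\nn_{j(i)}$ to $\mm_i$, and checking squarefree monomials suffices because $J^\vee$ is generated by them. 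The two proofs are essentially dual formulations of the same divisibility phenomenon and are equally short; the paper's version has the minor advantage of exhibiting, for each minimal generator of $J^\vee$, an explicit minimal generator of $I^\vee$ dividing it, while yours avoids any mention of complexes. Your second alternative via $K^\vee=\bigcap_{\kk\in G(K)}P_{\kk}$ is the slickest of all, though it invokes a description of the dual as an intersection of monomial primes that the paper does not explicitly record (it is \cite[Cor.~5.25]{MS}, the same source cited for \Cref{fact:mon in dual}).
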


\begin{proof} For the convenience of the reader, we provide an argument. 

Denote by $\Delta (I)$ and $\Delta (J)$ the simplicial complexes associated to $I$ and $J$. Let $\mm_J$ be a minimal generator of $J^\vee$. Thus, $\mm_J$ corresponds to the complement of a facet $F_J$ of $\Delta_J$. Since $I\subseteq J$, we have that $\Delta_J$ is a subcomplex of $\Delta_I$. Hence $F_J$ is contained in some facet $F_I$ of $\Delta_I$, and so the complement $\mm_I \in I^{\vee}$ of $F_I$ divides $\mm_J$, which shows $\mm_J \in I^{\vee}$. 
\end{proof}

The next corollary is immediate from \Cref{prop: reverse chain}.
\begin{cor}\label{cor: AD reverse chain}
If  $(I_n)_{n\in\NN}$  is a $\Sym$-invariant  chain of squarefree monomial ideals, then, viewing $I_n^\vee$  as an ideal of $R_{n+1}$, we have $I_{n+1}^\vee\subseteq I_n^\vee \cdot R_{n+1}$. 
%Hence, given a Sym-invariant chain $\mathscr{I}=I_1\subseteq I_2\subseteq \cdots$, we obtain for each $k\in\NN$ a descending chain of Alexander duals $\mathscr{I}^\vee=I_1^\vee \supseteq I_2^\vee\supseteq\cdots$. 
\end{cor}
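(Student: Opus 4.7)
The plan is to reduce the statement to \Cref{prop: reverse chain}, which compares two ideals inside a single polynomial ring, so I would do all the work inside $R_{n+1}$. The defining property $\Sym(n+1)(I_n)\subseteq I_{n+1}$ of a $\Sym$-invariant chain, specialized to the identity permutation, gives $I_n\subseteq I_{n+1}$ as subsets of $R_{n+1}$, and hence $I_n\cdot R_{n+1}\subseteq I_{n+1}$. Applying \Cref{prop: reverse chain} inside $R_{n+1}$ then yields
$$I_{n+1}^\vee \;\subseteq\; (I_n\cdot R_{n+1})^\vee,$$
where both Alexander duals are computed with respect to $R_{n+1}$.

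It remains to identify the extended Alexander dual $(I_n\cdot R_{n+1})^\vee$ with $I_n^\vee\cdot R_{n+1}$, and this is the only real content of the proof. For this I would pass to Stanley-Reisner complexes and observe that extending scalars from $R_n$ to $R_{n+1}$ corresponds to coning $\Delta(I_n)$ over the new apex $n+1$: the (minimal) nonfaces, viewed as subsets of $[n+1]$, are unchanged, since the apex lies in every facet. Taking complements in $[n+1]$ rather than $[n]$ simply adjoins $n+1$ to each facet of $\Delta(I_n)^\vee$, so the Alexander dual of the cone is the cone over $\Delta(I_n)^\vee$ with apex $n+1$. Translating back to ideals via the Stanley-Reisner correspondence gives the desired equality $(I_n\cdot R_{n+1})^\vee = I_n^\vee\cdot R_{n+1}$, and combining this with the containment above yields the corollary.

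The main conceptual point, and what I expect to be the only step requiring care, is the compatibility of Alexander duality with the ring extension $R_n\hookrightarrow R_{n+1}$; once the coning picture above is in place, the statement falls out immediately from \Cref{prop: reverse chain}.
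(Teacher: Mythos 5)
Your proof is correct and is essentially the route the paper intends: the paper declares the corollary ``immediate'' from \Cref{prop: reverse chain}, and you simply make explicit the one implicit step, namely that Alexander duality commutes with the extension $R_n\hookrightarrow R_{n+1}$ via the coning description of $\Delta(I_n\cdot R_{n+1})$. The only cosmetic point is that for $c>1$ there are $c$ new apices $\{(i,n+1)\mid i\in[c]\}$ rather than a single vertex $n+1$, but your argument goes through verbatim for the $c$-fold cone.
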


The following result describes the behavior of the Stanley-Reisner complexes of a $\Sym$-invariant chain of squarefree monomial ideals.
\begin{lemma}\label{lem: subcomplex}
Let $(I_n)_{n\in \mathbb{N}}$ be a $\Sym$-invariant chain of squarefree monomial ideals. Then 
 $\Delta(I_n)$ is a subcomplex of $\Delta(I_{n+1})$ for $n\gg 0$.
\end{lemma}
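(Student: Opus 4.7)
The plan is to reduce the claim to a purely ideal-theoretic statement and then exploit the stabilization of the chain guaranteed by $\Sym(\infty)$-Noetherianity. Concretely, $\Delta(I_n) \subseteq \Delta(I_{n+1})$ holds if and only if every squarefree monomial $\mm \in R_n$ that does not lie in $I_n$ also fails to lie in $I_{n+1}$. Taking contrapositives, it suffices to prove
\[
I_{n+1} \cap R_n \;\subseteq\; I_n \quad \text{for } n \gg 0.
\]
The reverse containment $I_n \subseteq I_{n+1} \cap R_n$ is automatic from the defining property $\Sym(n+1)(I_n) \subseteq I_{n+1}$, so this actually yields equality, but only the one direction is needed here.

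To establish this, I invoke the Aschenbrenner--Hillar--Sullivant theorem (recalled in the paragraph after \Cref{def: symInvariant}) to choose a finite set of monomials $\mm_1,\ldots,\mm_t \in R_{n_0}$ and an integer $n_0$ such that for every $n \geq n_0$, the ideal $I_n$ is (minimally) generated by the $\Sym(n)$-orbits of $\mm_1,\ldots,\mm_t$. Fix $n \geq n_0$ and suppose $\mm \in I_{n+1} \cap R_n$ is a monomial. Then some minimal generator of $I_{n+1}$ divides $\mm$, and by stabilization this generator has the form $\sigma(\mm_i)$ for some $\sigma \in \Sym(n+1)$ and some $i \in \{1,\ldots,t\}$.

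The key observation is that $\sigma$ can be replaced by a permutation lying in $\Sym(n)$. Indeed, since $\sigma(\mm_i)$ divides $\mm \in R_n$, every column-index appearing in $\sigma(\mm_i)$ lies in $[n]$; that is, $\sigma$ sends the (finite) set $J := \supp_j(\mm_i) \subseteq [n_0] \subseteq [n]$ injectively into $[n]$. Since $J$ and $\sigma(J)$ are subsets of $[n]$ of equal cardinality, any bijection between their complements in $[n]$ extends $\sigma|_J$ to a permutation $\tau \in \Sym(n)$. Because $\tau$ and $\sigma$ agree on $J$, we get $\tau(\mm_i) = \sigma(\mm_i)$, and hence $\tau(\mm_i)$ divides $\mm$. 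Since $\tau(\mm_i) \in I_n$, we conclude $\mm \in I_n$, completing the argument.

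There is no serious obstacle: once the stabilization statement is in hand, the proof is just a bookkeeping argument about extending a partial permutation supported on $[n_0]$ from $\Sym(n+1)$ down to $\Sym(n)$. The only place where care is needed is to ensure that the index $n_0$ is chosen so that both $I_n$ and $I_{n+1}$ are simultaneously given by $\Sym$-orbits of the common generating set $\{\mm_1,\ldots,\mm_t\}$; taking $n \geq n_0$ suffices for this.
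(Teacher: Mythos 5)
Your proof is correct and follows essentially the same route as the paper's: both reduce to showing that for $n$ past the stabilization point, a monomial of $R_n$ lying in $I_{n+1}$ must already lie in $I_n$. The only difference is that you spell out the permutation-extension step (restricting $\sigma\in\Sym(n+1)$ to the column support of a generator and extending it to some $\tau\in\Sym(n)$), which the paper's proof leaves implicit in the phrase ``it follows that $F\in\Sym(n)(I_n)$.''
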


\begin{proof}
Let $n\gg 0$ such that the chain has stabilized, i.e., $I_{n+1} = \Sym(n+1)(I_n)$. 
Let $F$ be a face of $\Delta(I_n)$, i.e., $F\notin I_n$, where we identify faces with monomials. If, by contradiction, $F\notin \Delta(I_{n+1})$, then $F\in I_{n+1}=\Sym(n+1)(I_n)$. As $F\in \Delta(I_n)$, it follows that $F\in\Sym(n)(I_n)$, which yields a contradiction.
\end{proof}

These results lead us to the following questions, which the remainder of this paper will be dedicated to answering. Recall that any $\Sym (\infty)$-invariant  monomial ideal of $R$ is generated by the $\Sym(\infty)$-orbits of finitely many monomials. 

\begin{question}\label{question: mainQuestion} 
Consider a $\Sym$-invariant chain $(I_n)_{n\in\NN}$ of squarefree monomial ideals. What are the monomials whose $\Sym(n)$-orbits minimally generate  $I_n^\vee$ for large $n$ in terms of the finitely many monomials whose $\Sym(\infty)$-orbits generate the colimit $\lim I_n$? How does the number of orbit generators of $I_n^{\vee}$ grow as $n\ra \infty$?
\end{question}

\subsection{Invariant Chains of Simplicial Complexes} 

By Stanley-Reisner theory, \Cref{question: mainQuestion} can also be stated in terms of simplicial complexes. We make this explicit now. 

\begin{definition}
         \label{def:invariant chain complexes}
Consider a sequence $(\Delta_n)_{n \in \NN}$  of 
simplicial complexes $\Delta_n$, where $\Delta_n$ has vertex set $V_n = [n]^c\coloneqq\{j_i \; \mid  \; i \in [c], j \in [n]\}$. 
Define an action of $\Sym (n)$  on $V_n$ by $\sigma \cdot j_i = (\sigma(j))_i$. The sequence $(\Delta_n)_{n \in \NN}$ is called a  \emph{$\Sym$-invariant chain} if $\Sym( n)$ maps any  face of $\Delta_m$ onto a face of $\Delta_n$ whenever $m \le n$ and the $c$-fold cone over $\Delta_n$ with apices $V_{n+1} - V_n$ is contained in $\Delta_{n+1}$. 
\end{definition}

As a consequence of the previous definition, every simplicial complex in a $\Sym$-invariant chain is $\Sym$-invariant as defined in the previous subsection. As mentioned in the introduction, any $\Sym$-invariant chain of simplicial complexes stabilizes. 

\begin{prop}
   \label{prop:stabilization of chain of complexes}
If $(\Delta_n)_{n \in \NN}$ is a $\Sym$-invariant chain of simplicial complexes, then there is some integer $n_0$ such that $\Sym(n+1)$-orbit of the cone over $\Delta_n$ with apices $V_{n+1} -V_n$ equals $\Delta_{n+1}$ whenever $n \ge n_0$
\end{prop}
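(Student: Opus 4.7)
The plan is to reduce this statement to $\Sym(\infty)$-Noetherianity applied not to the Stanley--Reisner ideals of the $\Delta_n$ themselves but to their \emph{Alexander duals}. Set $J_n := I_{\Delta_n}^\vee = I_{\Delta_n^\vee}$, whose minimal generators are, by \Cref{fact: alexDualComplements}, the squarefree monomials $\mm_F := \prod_{v \in V_n - F} x_v$ as $F$ ranges over the facets of $\Delta_n$. One direction of the desired equality is immediate: the cone condition combined with $\Sym(n+1)$-invariance of $\Delta_{n+1}$ gives $\Sym(n+1) \cdot C(\Delta_n) \subseteq \Delta_{n+1}$, where $C(\Delta_n)$ denotes the $c$-fold cone with apices $V_{n+1} - V_n$. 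The substance of the proof lies in the reverse inclusion.

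The first step is to verify that $(J_n)_{n \in \NN}$ is itself a $\Sym$-invariant chain of ideals in the sense of \Cref{def: symInvariant}. Each $J_n$ is $\Sym(n)$-invariant by \Cref{prop: alexDual}, so it suffices to show $J_n \cdot R_{n+1} \subseteq J_{n+1}$. Given a facet $F$ of $\Delta_n$, the cone condition implies $F \cup (V_{n+1} - V_n) \in \Delta_{n+1}$; hence this face is contained in some facet $F''$ of $\Delta_{n+1}$. Taking complements in $V_{n+1}$ yields $V_{n+1} - F'' \subseteq V_n - F$, so $\mm_{F''}$ divides $\mm_F$ in $R_{n+1}$ and therefore $\mm_F \in J_{n+1}$.

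Next, by $\Sym(\infty)$-Noetherianity of $R$ \cite{C, AH, HS}, the chain $(J_n)$ stabilizes: there exists $n_0$ such that $J_{n+1} = \Sym(n+1)(J_n) \cdot R_{n+1}$ for all $n \ge n_0$. Since the $J_n$ are squarefree monomial ideals, a routine minimality argument then forces each minimal generator of $J_{n+1}$ to be exactly a $\Sym(n+1)$-translate of a minimal generator of $J_n$: if a minimal generator $\mm_{F'}$ of $J_{n+1}$ is divisible by $\sigma(\mm_F) \in J_{n+1}$ for some $\sigma \in \Sym(n+1)$ and some facet $F$ of $\Delta_n$, then minimality of $\mm_{F'}$ together with the fact that $\sigma(\mm_F)$ is itself divisible by a minimal generator forces $\mm_{F'} = \sigma(\mm_F)$.

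Finally, I translate back to the complex side. The equality $\mm_{F'} = \sigma(\mm_F)$ means $V_{n+1} - F' = \sigma(V_n) - \sigma(F)$; taking complements in $V_{n+1}$ and using $V_{n+1} - \sigma(V_n) = \sigma(V_{n+1} - V_n)$ yields
$$F' = \sigma(F) \cup \sigma(V_{n+1} - V_n) = \sigma\bigl(F \cup (V_{n+1} - V_n)\bigr),$$
which exhibits $F'$ as a $\Sym(n+1)$-translate of a facet of $C(\Delta_n)$. Thus every facet, and hence every face, of $\Delta_{n+1}$ lies in $\Sym(n+1) \cdot C(\Delta_n)$. The hardest step is the first, where the iterated cone condition has to be leveraged carefully to verify the chain property of the Alexander dual ideals; the remaining steps are essentially formal.
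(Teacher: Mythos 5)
Your proposal is correct and follows essentially the same route as the paper: pass to the Alexander dual ideals $I_{\Delta_n}^\vee$, observe (via the cone hypothesis and \Cref{prop: alexDual}/\Cref{cor: AD reverse chain}) that they form an ascending $\Sym$-invariant chain, invoke $\Sym(\infty)$-Noetherianity to stabilize that chain, and translate back to facets. You merely fill in two steps the paper leaves implicit — the direct facet-level verification of the chain property and the final complement-taking argument identifying minimal generators of $I_{\Delta_{n+1}}^\vee$ with $\Sym(n+1)$-translates of coned facets — both of which are carried out correctly.
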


\begin{proof}
Identify any vertex $j_i \in V_n$ and the variable $x_{i, j} \in R_n$. Since each simplicial complex $\Delta_n$ is $\Sym (n)$-invariant, \Cref{prop: symmetricFaces} combined with \Cref{prop:alexDualComplex} implies that the Alexander duals $I_{\Delta_n^\vee}=(I_{\Delta_n})^\vee$ are $\Sym(n)$-invariant. The assumption that the cone over 
$\Delta_n$ with apices $V_{n+1} - V_n$ is contained in $\Delta_{n+1}$ implies that the extension ideal $I_{\Delta_n} \cdot R_{n+1}$ contains $I_{\Delta_{n+1}}$ and by \Cref{cor: AD reverse chain} we get that $I_{\Delta_n^\vee}R_{n+1}\subseteq I_{\Delta_n^\vee}$. Hence, the sequence $(I_{\Delta_n^\vee})_{n \in \NN}$ is $\Sym$-invariant. 
It follows from results in \cite{C, HS} or \cite{NR2} that the sequence stabilizes, that is, there is some integer $n_0$ such that $I_{\Delta_n^\vee} \cdot R_{n+1} = I_{\Delta_{n+1}^\vee}$ for $n\geq n_0$. The required stabilization for $(\Delta_n)_{n\in \NN}$ follows.
\end{proof}

This result shows that \Cref{question: mainQuestion} is equivalent to the problem of describing the facets of $\Delta_n^{\vee}$ and the number of $\Sym (n)$-orbits of facets of $\Delta_n^{\vee}$ for $n \gg 0$ if $(\Delta_n)_{n \in \NN}$ is a $\Sym$-invariant chain of simplicial complexes.

%%%%%%%%%%%%%%%%%%%%%%%%%%%%%%%%%%%%%%%%%%%%%%%%%%%%%%%%%%%%%%%%%%%%%
\section{Alexander Duals of $\Sym$-invariant Chains and Order Ideals}  %in $2^{[c]}$}
\label{sec: alexDualsOrderIdeals}
%%%%%%%%%%%%%%%%%%%%%%%%%%%%%%%%%%%%%%%%%%%%%%%%%%%%%%%%%%%%%%%%%%%%%
We introduce some notation specific to the case of squarefree monomial orbit generators that we will use throughout the rest of the paper. With this notation in hand, we can translate the question of determining whether or not a certain monomial belongs to the Alexander dual of a $\Sym$-invariant ideal into a combinatorial question closely related to the results from Section \ref{sec: avoidanceUpToSym}.

\begin{definition}
Let $\mm$ be a squarefree monomial in $R_n = \mathbf{k}[x_{i,j} ~\mid~ i\in [c], j\in [n] ]$ for a fixed $c,n$. Then the \emph{exponent matrix} associated to $\mm$ is the $c\times n$ matrix \([b_{i,j}]\in \{0,1\}^{c\times n}\), where \(b_{i,j}=1\) if \(x_{i,j}\) divides $\mm$ and $b_{i,j}=0$ otherwise. If $B$ is the exponent matrix of $\mm$, we write $\mm = \xx^B$.
% Fix $c,n\in \NN$. Then for any squarefree monomial \(x^B\in R_n\) has an exponent matrix \(B=[b_{i,j}]\in \{0,1\}^{c\times n}\) where \(b_{i,j}=1\) if \(x_{i,j}\) appears in the monomial and is \(0\) otherwise.
\end{definition}

\begin{example}\label{ex: exponent matrix of a monomial}
For \(c=3\), the monomial \(x_{1,1}x_{1,4} x_{1,5} x_{2,1}x_{2,2}x_{2,5} x_{3,1}x_{3,2}x_{3,5} \in R_5\) has exponent matrix 
\[
B = \begin{bmatrix}
1 & 0 & 0 & 1 & 1\\
1 & 1 & 0 & 0 & 1\\
1 & 1 & 0 & 0 & 1
\end{bmatrix}.
\]
\end{example}

Given an orbit generator $\xx^B$ of a $\Sym$-invariant squarefree monomial ideal $I_n$ in $R_n$ and a permutation $\sigma\in \Sym(n)$, denote by $\sigma(B)$ the matrix obtained from $B$ by applying the permutation $\sigma$ to its set of column vectors. Since any monomial 
$\sigma(\xx^B) = \xx^{\sigma(B)}$ also could have been chosen as an orbit generator of $I_n$, we would like to have a 
``standard'' representative for monomial orbit generators to avoid ambiguity.
% To the end of having a standard representative for a given \(\Sym(n)\) orbit, consider the following.

\begin{notation}\label{not: standardMatrix}
For a subset \(S\subseteq [c]\) and an integer \(k_S \geq 0\), let \(\ds A_S(k_S)\in \{0,1\}^{c\times k_S}\) denote the exponent matrix of the monomial \(\ds\prod_{i\in S}\prod_{j\in [k_S]}x_{i,j}\), i.e., the \((i,j)\) entry of $A_S(k_S)$ with $k_S > 0$ is \(1\) if and only if \(i\in S\) and it is $0$, otherwise.

Order the subsets of \([c]\) with a lexicographic ordering induced by the ordering \(1>2>\cdots >c\) on \([c]\).
That is, for subsets \(S\neq T\) of \([c]\), we let \(S>T\) if 
\begin{itemize}
\item \(|S|>|T|\) or, 
\item $\abs{S} = \abs{T}$ and \(s_i>t_i\), where $S=\{s_1<s_2<\cdots<s_k\}$, $T=\{t_1<t_2<\cdots<t_k\}$ and $i=\min(\ell~\mid~s_\ell\neq t_\ell)$.
\end{itemize}
\end{notation}
% With this ordering, we define the \emph{standard matrix} of a monomial \(x^B\). 

\begin{definition}\label{def: standard matrix}
For any monomial \(\xx^B\in R_n\), we call the unique matrix 
\[
A=\left[A_{S_1}(k_{S_1})\;\bigg\vert\; A_{S_2}(k_{S_2})\;\bigg\vert\;\cdots\;\bigg\vert\; A_{S_{2^c-1}}(k_{S_{2^c-1}})\right]
\]
with \(\ds [c]=S_1>S_2>\cdots >S_{2^c-1}=\{c\}\) and suitable integers \(k_{S_i}\geq 0\) such that \(\xx^A\in \Sym(n)\cdot \xx^B\) the \emph{standard matrix} of \(\xx^B\). 
Using the ordering on the subsets from Notation \ref{not: standardMatrix}, we often simply write 
\[
A=\left[\Ast_{ \emptyset\neq T \subseteq [c]}A_T(k_T)\right]
\]
to denote the standard matrix.
\end{definition}

\begin{remark} 
    \label{rem:padding}
If $A$ is a standard matrix with $k<n$ columns, then it is understood that the monomial $\xx^A\in R_n$ refers to the monomial with exponent matrix $A$ padded with $n-k$ columns of zeroes at the end.
\end{remark}

\begin{example}\label{ex: standard matrix}
For \(c=3\), the monomial \(\xx^B\) in Example~\ref{ex: exponent matrix of a monomial} has standard matrix 
\[
A=\left[A_{[3]}(2)\;\bigg\vert\;A_{\{2,3\}}(1) \;\bigg\vert\; A_{\{1\}}(1)\right]=\begin{bmatrix}1&1&0&1\\1&1&1&0\\1&1&1&0\end{bmatrix}
\]
where \(k_S = 0\) for all other subsets.
\end{example}

\begin{definition}\label{def:support}
Let $A\in \{0,1\}^{c\times n}$ be a $0-1$-matrix with columns $A_1,\ldots,A_n$. 
Define the \textit{support} of a column $A_i$, denoted \(\Supp(A_i)\), to be the set of row indices for which the column \(A_i\) has nonzero entries. We also say that $A_i$ has \textit{support} in $S \subseteq [c]$ or \emph{is supported} in $S$ if $S=\Supp(A_i)$.

 The \emph{support} of the matrix \(A\) is the set 
 \[
 \Supp (A) = \{T \subseteq [c] \; \mid \;  T = \Supp (C) \text{ for some column $C$ of $A$}\}. 
 \]
\end{definition}
We note that matrices $A$ and $\sigma(A)$ have the same support set for any $\sigma \in \Sym(n)$. 
With this notation established, we now apply Theorem \ref{thm: combinatorial statement} to study Question \ref{question: mainQuestion} in the case where ideal $I_n \subset R_n$ is generated by a single squarefree monomial orbit $\Sym (n) \cdot \xx^A$ for $n \gg 0$. We characterize when a monomial $\xx^B\in R_n$ is in the Alexander dual of $I_n$ for a fixed $n$. In this case, the image of the map $f$ in the hypotheses of Theorem \ref{thm: combinatorial statement} will correspond to the support set of the standard matrix of $\xx^A$, while the image of $g$ will be given by the support set of the standard matrix of $\xx^B$.

\begin{definition}\label{not: OneOrbitLemmas}
Given a matrix $A \in \{0,1\}^{c\times m}$ with
$$ A = \newMatrix{2^{[c]}}{k}, \quad  k_T \geq 0,$$
the $\Sym$-invariant chain   \emph{generated by $\xx^A$}  is the chain of ideals $(I_n)_{n \in \NN}$  with   $I_n =\langle \Sym(n) \cdot \xx^A \rangle \subset R_n$ if $n \ge m$ and $I_n = 0$ otherwise. 
\end{definition}
% we now outline two corollaries of Theorem \ref{thm: one orbit polynomial}: a criterion for Alexander dual membership and a criterion for divisibility up to symmetry. These lemmas will be critical for constructing minimal generating sets for $I_n^\vee$. 

\begin{prop}[Alexander Dual Membership Criterion] \label{coro: inequalities for alexander dual} 
Let $(I_n)_{n \in \NN}$ be a $\Sym$-invariant chain of ideals generated by $\xx^A$ with $A = \newMatrix{2^{[c]}}{k} \in \{0,1\}^{c\times m}$.  
Fix an integer $n \geq m$ and consider a matrix $B = \newMatrix{2^{[c]}}{\ell}\in\{0,1\}^{c\times n}$. Then the following conditions are equivalent:
\begin{enumerate}[(a)]
    \item The monomial $\xx^B$ does not lie in $I_n^\vee$.
    \item For every order ideal $J$ of $2^{[c]}$, one has $\sum_{T\in J} k_T \leq \sum_{T\in J}\ell_{T^C}$.
    \item For every order ideal $J$ of $2^{[c]}$, one has $\sum_{T\in J} \ell_T \leq \sum_{T\in J}k_{T^C}$.
\end{enumerate}
\end{prop}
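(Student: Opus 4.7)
The plan is to interpret non-membership of $\xx^B$ in $I_n^{\vee}$ as a combinatorial disjointness condition on column supports and then invoke Theorem \ref{thm: combinatorial statement}.

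First I would use Fact \ref{fact:mon in dual}. Since $I_n$ is generated by the single $\Sym(n)$-orbit of $\xx^A$, the negation of (a) becomes: there exists some $\sigma \in \Sym(n)$ for which $\xx^B \cdot \xx^{\sigma A}$ is squarefree. Viewing $\sigma A$ and $B$ as $\{0,1\}$-matrices with $n$ columns each (padding $A$ with $n-m$ zero columns as in Remark \ref{rem:padding}), this product is squarefree exactly when the $j$-th columns of $B$ and of $\sigma A$ have disjoint supports for every $j \in [n]$. Since $(\sigma A)_j = A_{\sigma^{-1}(j)}$, setting $\tau = \sigma^{-1}$ rephrases the criterion as the existence of $\tau \in \Sym(n)$ satisfying $\Supp(A_i) \cap \Supp(B_{\tau(i)}) = \varnothing$ for every $i \in [n]$.

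Next I would apply Theorem \ref{thm: combinatorial statement} with $M = [c]$, $N = [n]$, $K = 2^{[c]}$ (so $\overline{K} = 2^{[c]}$ as well), and the maps $f, g \colon [n] \to 2^{[c]}$ defined by $f(i) = \Supp(A_i)$ and $g(i) = \Supp(B_i)$. The condition extracted in the previous paragraph is precisely condition (c) of that theorem, and hence is equivalent to conditions (a) and (b) there. To match these with statements (b) and (c) of the proposition, I would simply observe that $|f^{-1}(T)| = k_T$ and $|g^{-1}(T)| = \ell_T$ for every $T \subseteq [c]$, under the convention that absorbs the $n-m$ padding columns into $k_{\varnothing}$; this also guarantees $\sum_T k_T = \sum_T \ell_T = n$, which is the hypothesis needed to apply Lemma \ref{lem: equivalence of inequalities}. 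Under this identification, the inequalities in theorem conditions (a) and (b) become, verbatim, the inequalities in proposition statements (b) and (c).

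The only delicate point I anticipate is keeping track of the empty set. An upper order ideal of $2^{[c]}$ contains $\varnothing$ only if it is all of $2^{[c]}$, in which case both sides of each inequality collapse to $n$ and hold trivially (consistent with the hypothesis $n \ge m$). For every proper upper order ideal the padded columns do not contribute and the correspondence between $k_T,\ell_T$ and $|f^{-1}(T)|,|g^{-1}(T)|$ is immediate. Once this bookkeeping is dispatched, the equivalence of (a), (b), and (c) is a direct reading of Theorem \ref{thm: combinatorial statement}.
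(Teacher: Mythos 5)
Your proposal is correct and follows essentially the same route as the paper: translate non-membership via Fact \ref{fact:mon in dual} into the existence of a permutation making the column supports of $A$ and $B$ disjoint, then apply Theorem \ref{thm: combinatorial statement} to the maps $f(i)=\Supp(A_i)$ and $g(i)=\Supp(B_i)$ with $k_T=|f^{-1}(T)|$ and $\ell_T=|g^{-1}(T)|$. Your extra care about the direction of the permutation and the padding of zero columns into $k_{\varnothing}$ is sound bookkeeping that the paper leaves implicit.
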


\begin{proof}
Define maps $f,g \colon [n] \to 2^{[c]}$ by
$$f(i) = \Supp(A_i), \quad \text{and} \quad g(i) = \Supp(B_i).$$ 
\Cref{fact:mon in dual} implies that  $\xx^B \notin I_n^\vee$ if and only if there exists a permutation $\sigma \in \Sym(n)$ such that $f(i) \cap g(\sigma(i))=\varnothing$ for every $i\in[n]$. By Theorem \ref{thm: combinatorial statement}, this is equivalent to conditions $(b)$ and $(c)$, where $k_T = |f^{-1}(T)|$ and $\ell_T = |g^{-1}(T)|$ for each $T\subseteq [c]$.
\end{proof}

% \subsection{Divisibility up to symmetry}

\begin{definition}\label{not: divUpToSym}
Consider two matrices $B,B'\in \{0,1\}^{c\times n}$. We say that $\xx^{B'}$ \textit{divides $\xx^B$ up to symmetry} if there exists $\sigma\in\Sym(n)$ such that $\xx^{B'}$ divides $\xx^{\sigma\cdot B}$.
\end{definition}

We will frequently say that $\xx^{B'}$ divides $\xx^B$ but mean that this is up to symmetry.

The following criterion for determining divisibility up to symmetry is also a consequence of Theorem \ref{thm: combinatorial statement}. It will be critical for constructing a minimal generating set for $I_n^\vee$ in both the one-orbit and the general case.

\begin{lemma}[Divisibility Lemma]\label{lem: divisibility up to symmetry}
Consider two matrices $$B = \newMatrix{2^{[c]}}{\ell} \quad \text{and} \quad B' = \newMatrix{2^{[c]}}{j}$$ in $\{0,1\}^{c\times n}$. 
% Let $B, B'$ be as in Notation \ref{not: divUpToSym}. 
Then $\xx^{B'}$ divides $\xx^B$ up to symmetry if and only if for every order ideal $J$ of $2^{[c]}$ that is generated by a non-empty  subset of $\Supp (B')$, one has
$$\sum_{T\in J}j_T \leq \sum_{T\in J}\ell_T.$$
\end{lemma}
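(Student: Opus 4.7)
The plan is to deduce this from Theorem \ref{thm: combinatorial statement}. Take $M = [c]$, $N = [n]$, and $K = 2^{[c]}$ (so $\overline{K} = K$), and define $f, g \colon [n] \to 2^{[c]}$ by $f(i) = \Supp(B'_i)$ and $g(i) = \Supp(B_i)^C$. Working columnwise, $\xx^{B'}$ divides $\xx^{\sigma \cdot B}$ exactly when $\Supp(B'_i) \subseteq \Supp(B_{\sigma^{-1}(i)})$ for every $i$; setting $\tau = \sigma^{-1}$, this becomes $f(i) \cap g(\tau(i)) = \varnothing$ for all $i \in N$. Hence ``$\xx^{B'}$ divides $\xx^B$ up to symmetry'' is exactly condition (c) of Theorem \ref{thm: combinatorial statement}.

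Applying that theorem, such a $\tau$ exists if and only if
$$\sum_{T \in J} |f^{-1}(T)| \leq \sum_{T \in J} |g^{-1}(T^C)| \quad \text{for every order ideal } J \text{ of } 2^{[c]}.$$
Since $B'$ and $B$ are in standard form, $|f^{-1}(T)|$ is the number of columns of $B'$ with support $T$, namely $j_T$, and $|g^{-1}(T^C)|$ counts the columns of $B$ whose complement-of-support equals $T^C$ (equivalently, whose support equals $T$), namely $\ell_T$. The criterion therefore reduces to
$$(\ast) \qquad \sum_{T \in J} j_T \leq \sum_{T \in J} \ell_T \quad \text{for every order ideal } J \subseteq 2^{[c]}.$$

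The last step is to show that $(\ast)$ for all order ideals is equivalent to $(\ast)$ restricted to those $J$ generated by a non-empty subset of $\Supp(B')$. One direction is immediate. For the converse, fix an arbitrary order ideal $J$ and set $S = J \cap \Supp(B')$. If $S = \varnothing$, then $j_T = 0$ for all $T \in J$ and $(\ast)$ is trivial. Otherwise, let $J'$ be the order ideal generated by $S$. The key observation is $J' \cap \Supp(B') = S$: any $T' \in J' \cap \Supp(B')$ is a superset of some $T \in S \subseteq J$, and the upward closure of $J$ forces $T' \in J \cap \Supp(B') = S$. This gives $\sum_{T \in J'} j_T = \sum_{T \in S} j_T = \sum_{T \in J} j_T$, while $\sum_{T \in J'} \ell_T \leq \sum_{T \in J} \ell_T$ since $J' \subseteq J$. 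Combining these with the hypothesis applied to the admissible $J'$ yields $(\ast)$ for the arbitrary $J$.

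The main obstacle I expect is the bookkeeping: keeping straight the direction of the permutation (divisibility into $\xx^{\sigma B}$ naturally produces $\sigma^{-1}$, though this is harmless since $\Sym(n)$ is a group), correctly identifying the fiber counts $|f^{-1}(T)|$ and $|g^{-1}(T^C)|$ with the standard-form multiplicities $j_T$ and $\ell_T$, and the order-ideal truncation at the end which allows one to replace arbitrary order ideals by those generated inside $\Supp(B')$.
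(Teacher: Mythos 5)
Your proposal is correct and follows essentially the same route as the paper: apply Theorem \ref{thm: combinatorial statement} with $f$ given by the column supports of $B'$ and $g$ by the complements of the column supports of $B$, identify the fiber counts with $j_T$ and $\ell_T$, and then reduce an arbitrary order ideal $I$ to the one generated by $E = I\cap\Supp(B')$ using $\langle E\rangle\cap\Supp(B')=E$. No gaps.
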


\begin{proof}
By definition, $\xx^{B'}$ divides $\xx^B$ up to symmetry if and only if, up to a permutation of columns, one has $\Supp(B'_i) \subseteq \Supp(B_i)$ for each column $i$. Define $\overline{B}$ to be the matrix whose column supports are the complements of those of $B$; that is, $\Supp(\overline{B}_i) = \Supp(B_i)^C$ for each $i \in [n]$. Then $\xx^{B'}$ divides $\xx^B$ up to symmetry if and only if $\Supp(\overline{B}_i) \cap \Supp(B_i') = \varnothing$ for all $i$. 
  By  \Cref{thm: combinatorial statement}, this equality is true for some permutation of the columns of $\overline{B}$ exactly if
\begin{equation}
   \label{eq:divisibility inequality}
\sum_{T\in J} j_T \leq \sum_{T\in J}\ell_{{(T^C)}^C} = \sum_{T\in J} \ell_T
\end{equation}
for every order ideal $J$ of $2^{[c]}$. 

It remains to show that is suffices to check Inequality \eqref{eq:divisibility inequality} for order ideals generated by non-empty subsets of $\Supp (B')$. To this end consider any order ideal $I$ of $2^{[c]}$ and assume that Inequality \eqref{eq:divisibility inequality} is true for $\langle E\rangle$, where $E = I \cap \Supp (B')$. Then,

\[
\sum_{T\in I} j_T = \sum_{T\in \langle E\rangle} j_T\leq \sum_{T\in \langle E\rangle}\ell_T\leq \sum_{T\in I}\ell_T
\] 
which shows Inequality \eqref{eq:divisibility inequality} for $I$. This completes the argument. 
\end{proof} 

\begin{remark}
    \label{rem:divisibility} 
For $J = 2^{[c]}$, the condition $\sum_{T\in J}j_T \leq \sum_{T\in J}\ell_T$ is automatically true with equality as both sides equal $n$, the number of columns of $B'$ and of $B$. Thus, in \Cref{lem: divisibility up to symmetry}, it is enough to check the stated condition for order ideals $J \neq 2^{[c]}$. 
\end{remark}

We close this section with another result that we will use repeatedly to simplify our arguments and to reduce the number of possible monomials under consideration for the minimal orbit generators of $I_n^\vee$.

\begin{lemma}[Exchange Lemma]\label{lem: exchangeLemma} 
Adopt the notation of \Cref{coro: inequalities for alexander dual}. Let $J$ be an order ideal of $2^{[c]}$. 
%Let $A$ be as in Notation \ref{not: OneOrbitLemmas} and let $B = \newMatrix{2^{[c]}}{\ell}$ be a matrix in the generating set of $I_n^\vee$. 
Assume  that $\xx^B$ is a minimal generator of $I_n^\vee$ satisfying 
$$
\sum_{T\in J} \ell_T > \sum_{T\in J} k_{T^C}.
$$
Then the following hold:
\begin{itemize}
\item[(a)] If $\ell_S>0$ for some $S\in J$, then $S$ is a minimal element of $J$.
\item[(b)] $\ell_U=0$ for all $\emptyset\neq U\in 2^{[c]}-J$ and $\ell_\emptyset=\sum_{T\in 2^{[c]}-\overline{J}} k_{T}-1=n-1-\sum_{T\in J} k_{T^C}$.
\end{itemize}

\end{lemma}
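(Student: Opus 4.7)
The plan is an exchange argument by contradiction in both parts: starting from $\xx^B$, I will modify one column to obtain a matrix $B'$ such that some element of the orbit $\Sym(n)\cdot\xx^{B'}$ literally and properly divides $\xx^B$, and such that $\xx^{B'}$ still lies in $I_n^{\vee}$. Since $I_n^{\vee}$ is $\Sym(n)$-invariant (\Cref{prop: alexDual}), the orbit element witnessing the division also lies in $I_n^{\vee}$, contradicting the minimality of $\xx^B$. To verify that $\xx^{B'}\in I_n^{\vee}$, I will reuse the given order ideal $J$ as a witness violating condition (c) of \Cref{coro: inequalities for alexander dual}. A preliminary observation is that $J$ must be a proper nonempty order ideal: if $J=\varnothing$ both sides of the inequality are zero, while if $J=2^{[c]}$ both sides equal $n$. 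In particular, $\varnothing\notin J$.

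For part (a), suppose for contradiction that $S\in J$ satisfies $\ell_S>0$ but is not minimal in $J$, and pick $S'\in J$ with $S'\subsetneq S$. Construct $B'$ from $B$ by turning one column of support $S$ into a column of support $S'$, so that $\ell'_{S'}=\ell_{S'}+1$, $\ell'_S=\ell_S-1$, and all other $\ell'_T$ are unchanged. Because both $S$ and $S'$ lie in $J$, one has $\sum_{T\in J}\ell'_T=\sum_{T\in J}\ell_T>\sum_{T\in J}k_{T^C}$, so $\xx^{B'}\in I_n^{\vee}$ by \Cref{coro: inequalities for alexander dual}. The in-place edit of $B$ produces a monomial in $\Sym(n)\cdot\xx^{B'}$ that literally divides $\xx^B$ (strictly, since $S'\subsetneq S$), delivering the contradiction.

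For part (b), I would first dispose of the vanishing claim. Assume $\ell_U>0$ for some $\varnothing\neq U\in 2^{[c]}-J$, and replace one column of support $U$ by an empty column; since neither $U$ nor $\varnothing$ belongs to $J$, the sum $\sum_{T\in J}\ell'_T$ is unchanged, so the same argument shows $\xx^{B'}\in I_n^{\vee}$ and yields a proper orbit divisor of $\xx^B$, forcing $\ell_U=0$. With this established, $n=\ell_\varnothing+\sum_{T\in J}\ell_T$, so the identity $\ell_\varnothing=n-1-\sum_{T\in J}k_{T^C}$ is equivalent to $\sum_{T\in J}\ell_T=\sum_{T\in J}k_{T^C}+1$. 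The lower bound $\sum_{T\in J}\ell_T\geq\sum_{T\in J}k_{T^C}+1$ is the hypothesis together with integrality. For the upper bound, assume $\sum_{T\in J}\ell_T\geq\sum_{T\in J}k_{T^C}+2$; then there exists $S\in J$ with $\ell_S>0$, which by part (a) is minimal in $J$. Replacing one column of support $S$ by an empty column gives $\sum_{T\in J}\ell'_T=\sum_{T\in J}\ell_T-1\geq\sum_{T\in J}k_{T^C}+1>\sum_{T\in J}k_{T^C}$, so once again $\xx^{B'}\in I_n^{\vee}$ and we produce a proper orbit divisor of $\xx^B$, contradiction. The alternative description $\ell_\varnothing=\sum_{T\in 2^{[c]}-\overline{J}}k_T-1$ then follows from $\sum_T k_T=n$ combined with the bijection $T\leftrightarrow T^C$ sending $\overline{J}$ to $J$.

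The only subtlety I anticipate is the divisibility bookkeeping: the matrix $B'$ rearranged into standard form (\Cref{def: standard matrix}) generally does not literally divide $\xx^B$, but the \emph{in-place} modification of $B$ does, and it lies in $\Sym(n)\cdot\xx^{B'}$. So appealing to $\Sym(n)$-invariance of $I_n^{\vee}$ is essential for the contradiction. Everything else is a careful accounting of which subsets of $[c]$ belong to $J$, which is handled uniformly by the observations that $\varnothing\notin J$ and that the given strict inequality persists under any modification that adds or removes one column whose support either lies entirely inside $J$ or entirely outside $J$.
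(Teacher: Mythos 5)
Your proof is correct and follows essentially the same exchange argument as the paper: modify one column of $B$ so that the sum $\sum_{T\in J}\ell_T$ is preserved (or stays strictly above $\sum_{T\in J}k_{T^C}$), reuse $J$ as the witness in the Membership Criterion, and contradict minimality via the in-place proper divisor. The only differences are cosmetic (you remove offending columns one at a time where the paper zeroes them out, or deletes the surplus, all at once), and your explicit remarks that $\varnothing\notin J$ and that $\Sym(n)$-invariance is what licenses the divisibility bookkeeping are correct and if anything slightly more careful than the original.
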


\begin{proof} 
For (a), assume by contradiction that there are elements $S' \subsetneqq S$ in $J$ with $\ell_S>0$. 
Consider the matrix
$$
B' = \left[A_{S'}(\ell_{S'}+1)\right]\ast\left[A_S(\ell_S-1)\right]\ast\left[\Ast_{U\in 2^{[c]} - \{S,S'\}} A_U(\ell_U)\right].
$$
It differs from $B$ in only two columns. Since $S' \subsetneqq S$, we know that $\xx^{B'}$ is a proper divisor of $\xx^B$. Thus, it suffices to show that $\xx^{B'}\in I_n^\vee$. This follows by the Alexander Dual Membership Criterion from the fact that the sums of the number of columns of $B$ and $B'$ that are supported in $S$ or $S'$ are the same. 

For (b), we let $a \coloneqq n-\sum_{T\in J} \ell_{T}$. Setting $\tilde{B}=[\Ast_{T\in J}A_T(\ell_T)\ast A_{\emptyset}(a)]$, the Alexander Dual Membership Criterion directly implies that $\xx^{\tilde{B}}\in I_n^\vee$. Since $\xx^{\tilde{B}}$ divides $\xx^B$ and the latter is a minimal generator, we must have $\xx^B=\xx^{\tilde{B}}$ and hence $\ell_U=0$ for all $\emptyset\neq U\in 2^{[c]}-J$. Moreover, we claim that 
$$
\sum_{T\in J} \ell_T = \sum_{T\in J} k_{T^C}+1.
$$
Otherwise, assume that $\sum_{T\in J} \ell_T = \sum_{T\in J} k_{T^C}+c$ for some $c\in \mathbb{N}$ with $c>1$. Denote by $\overline{B}$ a matrix obtained from $B$ by deleting $c-1$ columns with support in some element of $J$ and adding $c-1$ columns of all zeroes. The Alexander Dual Membership Criterion implies that $\xx^{\overline{B}}\in I_n^\vee$. As $\xx^{\overline{B}}$ properly divides $\xx^B$ by construction, we arrive at a contradiction and the claim is shown. 
It follows that 
$$
\ell_\emptyset=n-\sum_{T\in J}\ell_T=n-\sum_{T\in J}k_{T^C}-1=\sum_{T\in 2^{[c]}-\overline{J}}k_T-1,
$$
which finishes the proof.
\end{proof}

%%%%%%%%%%%%%%%%%%%%%%%%%%%%%%%%%%%%%%%%%%%%%%%%%%%%%
\section{The Case of One Orbit}\label{sec: oneOrbit}
%%%%%%%%%%%%%%%%%%%%%%%%%%%%%%%%%%%%%%%%%%%%%%%%%%%%%

Aided by the results and setting established in Section \ref{sec: alexDualsOrderIdeals}, we produce the generating sets for the Alexander duals of $\Sym$-invariant chains of Stanley-Reisner ideals in the case where they have a single generator up to symmetry. We treat this case separately since there the ideas are more transparent and will be helpful for proving the general case. 

\begin{notation}
Throughout this section, fix a nonzero matrix $A \in \{0,1\}^{c\times m}$ given by
$$ A = \newMatrix{2^{[c]}}{k}, \quad  k_T \geq 0, $$
and consider the $\Sym$-invariant chain of ideals $(I_n)_{n \in \NN}$ generated by $\xx^A$.  Thus, $I_n =\langle \Sym(n) \cdot \xx^A \rangle \subset R_n = \mathbf{k}[x_{i,j} \mid i\in [c], j\in [n] ]$ if $n \ge m$ (see  \Cref{not: OneOrbitLemmas}). 
\end{notation}

%Aided by Corollary \ref{coro: inequalities for alexander dual} and Lemma \ref{lem: divisibility up to symmetry}, we now aim to construct a generating set for $I_n^\vee$. 
Below we outline the basic construction for obtaining a generating set for $I_n^\vee$.

\begin{construction}\label{const: GC_(n)}
Let $n \ge m$. By adding $n-m$ columns of zeroes to $A$, we view $A$ as a $(c\times n)$-matrix. Consider a matrix 
$B = \newMatrix{2^{[c]}}{\ell}\in\{0,1\}^{c\times n}$. Thus, $n = \sum_{T \in 2^{[c]}}k_T = \sum_{T \in 2^{[c]}}\ell_{T^C}$. Hence \Cref{coro: inequalities for alexander dual} yields that  
the monomial $\xx^B$ lies in $I_n^\vee$ if and only if there exists a proper order ideal $J\subsetneqq 2^{[c]}$ such that 
\begin{equation}\label{eq:witness}
\sum_{T\in J}\ell_T > \sum_{T\in J}k_{T^C}.
\end{equation}
If, in addition, $\xx^B$ is a minimal generator of $I_n^\vee$, then by the Exchange Lemma we have $\Supp(B)\subseteq \cC\cup\{\emptyset\}$, where $\cC$ is the antichain generating $J$ as an order ideal. Moreover, up to padding with zero columns (see \Cref{rem:padding}), $B$ has to lie in the set
$$\cG_\cC(n) := \left \{\newMatrix{\cC}{j} \; \mid \; j_T \geq 0, \sum_{T \in \cC} j_T = n+1-k_\cC\right\},$$
where 
\begin{equation} 
    \label{eq:def of kC}
k_\cC := \sum_{T\in 2^{[c]} - \overline{\langle \cC \rangle}}k_T.
\end{equation}
In the proof of \Cref{lem: equivalence of inequalities}, we showed that \eqref{eq:witness} is equivalent to
\begin{equation*}
\sum_{T\in 2^{[c]}-\overline{J}}k_T > \sum_{T\in 2^{[c]}-\overline{J}}\ell_{T^C}\geq 0.
\end{equation*}
It follows that $k_\cC \geq 1$.
% and this ensures that the number of nonzero columns of elements in $\cG_\cC(n)$ is at most $n$, and so $x^B$ corresponds to a squarefree monomial in $ R_n = \mathbf{k}[x_{i, j} \; \mid \; i \in [c], j \in [n]]$. 
%Since the operation $J \mapsto 2^{[c]}-\overline{J}$ is an involution on the set of order ideals in $2^{[c]}$, any exponent matrix of a minimal generator $x^B$ of $I_n^{\vee}$ is contained in some set $\cG_\cC(n)$ for a suitable antichain $\cC$ in $2^{[c]}$. 
% {\color{blue} We could prove that statement if needed - it is a quick calculation.} 

% {\color{blue} Sometimes, we leave out the empty set when constructing matrices by their column supports, with it understood that we pad with zeroes until there are $n$ total columns. This should probably be stated explicitly somewhere.}
\end{construction}

Since non-empty antichains are in one-to-one correspondence with proper order ideals, the above construction yields the following generating set, which is not necessarily minimal. 

\begin{prop}\label{prop: redundant gen set one orbit}
For any $n \ge m$, the Alexander dual $I_n^\vee = \langle \Sym(n) \cdot \xx^A\rangle^\vee$ is generated by the set
$$\bigcup_{\substack{\text{ $\cC$ antichain}\\  \varnothing \neq \cC \subsetneqq 2^{[c]}, \ k_\cC \geq 1}} \left \{ \xx^B \; \mid \;  B \in \cG_\cC(n) \right\}.$$
\end{prop}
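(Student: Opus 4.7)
The plan is to leverage Construction~\ref{const: GC_(n)}, which already carries out most of the work. That construction shows that every minimal generator $\xx^B$ of $I_n^\vee$ lies, up to padding with zero columns, in $\cG_\cC(n)$ for some antichain $\cC$ with $\varnothing \neq \cC$ and $k_\cC \geq 1$. Consequently, the set on the right-hand side of the proposition already contains a minimal generating set, and it only remains to verify the reverse containment: every monomial $\xx^B$ with $B \in \cG_\cC(n)$ (for some valid $\cC$) genuinely lies in $I_n^\vee$.

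For this, I would take the order ideal $J := \langle \cC \rangle$ as a witness in the Alexander Dual Membership Criterion (\Cref{coro: inequalities for alexander dual}) and check that
\[
\sum_{T \in J} \ell_T \;>\; \sum_{T \in J} k_{T^C},
\]
which is precisely the negation of condition (c) of that proposition and therefore forces $\xx^B \in I_n^\vee$. The left-hand side collapses to $\sum_{T \in \cC} j_T = n + 1 - k_\cC$, since the only columns of $B$ with support in $J$ are those supported in $\cC$ itself (the $k_\cC - 1$ padded columns have support $\emptyset \notin J$). On the right-hand side, re-indexing by $S = T^C$ converts the sum into $\sum_{S \in \overline{J}} k_S$, which by the definition of $k_\cC$ in~\eqref{eq:def of kC} and the identity $\sum_T k_T = n$ (after padding) equals $n - k_\cC$. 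The difference is therefore exactly $1$, giving the desired strict inequality.

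The one subtlety worth isolating is that $J = \langle \cC \rangle$ must be a \emph{proper} order ideal of $2^{[c]}$ for the witness to be admissible. This could only fail if $\emptyset \in \cC$, which forces $\cC = \{\emptyset\}$ and $J = 2^{[c]}$; but then $k_\cC = \sum_{T \in 2^{[c]} - \overline{2^{[c]}}} k_T = 0$, contradicting the hypothesis $k_\cC \geq 1$. I do not anticipate any other obstacles: the heart of the argument is a one-line counting check enabled by the definition of $\cG_\cC(n)$, which was engineered precisely so that the witness inequality holds with a gap of exactly $1$.
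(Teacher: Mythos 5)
Your proof is correct and takes essentially the same route as the paper, whose entire argument is the one-line observation that Construction~\ref{const: GC_(n)} yields this generating set (the construction already contains both the fact that every minimal generator lands in some $\cG_\cC(n)$ and the membership criterion guaranteeing the reverse containment). Your explicit check that $J=\langle \cC\rangle$ is an admissible witness --- the gap of exactly $1$ between $\sum_{T\in J}\ell_T$ and $\sum_{T\in J}k_{T^C}$, and the observation that $k_\cC\geq 1$ rules out $\cC=\{\emptyset\}$ so that $J$ is proper --- just fills in details the paper leaves implicit.
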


Observe that the condition $k_\cC \geq 1$ implies that the order ideal generated by $\cC$ is not equal to $2^{[c]}$. 

We discuss \Cref {prop: redundant gen set one orbit} in an example. The example also suggests subsequent refinements to obtain a minimal generating set.

\begin{example}\label{ex: one-orbit}
Consider the matrix
$$
A = \begin{pmatrix}
      1&1&0\\
      1&0&1\\
      0&1&1\end{pmatrix}
$$
and set $I_4 = \langle \Sym(4) \cdot \xx^A\rangle$.
First, we illustrate Construction \ref{const: GC_(n)} to describe the set of generators of $I_4^\vee$ in $\cG_\cC(4)$ for the antichain $\cC = \{\{2\},\{3\}\}$ of  $2^{[3]}$. To simplify notation, we will often omit parentheses around sets and commas between elements of sets, e.g., we write $k_{123}$ instead of $k_{\{1,2,3\}}$. Notice that 
$$ \begin{tikzpicture}
    \node (top) at (0,0) {$\{1,2,3\}$}; \node (c) at (-2.5,-1) {$\langle \cC \rangle = $};
    \node (12) at (-1,-1) {$\{1,2\}$};
    \node (23) at (0,-1) {$\{2,3\}$};
    \node (13) at (1,-1) {$\{1,3\}$};
    \node (2) at (-0.5,-2) {$\{2\}$};
    \node (3) at (0.5, -2) {$\{3\}$};
    \draw (top)--(12)--(2)--(23)--(top)--(13)--(3)--(23);
    \node (top2) at (6,-0.5) {$\{1,2,3\}$};
    \node (comp) at (6,-1.5) {$\{2,3\}$};
    \node (c2) at (3.5, -1) {\text{and} \quad $2^{[3]} - \overline{\langle \cC\rangle} = $};
    \draw (top2)--(comp);
\end{tikzpicture}
% \begin{tikzpicture}
%     \node (top) at (0,0) {$\{1,2,3\}$};
%     \node (23) at (0,-1) {$\{2,3\}$};
%     \node (c) at (-2, -0.5) {\text{and } $2^{[3]} - \overline{\langle \cC\rangle} = $};
%     \draw (top)--(23);
% \end{tikzpicture}
$$
so $
k_\cC = \sum_{T\in 2^{[c]}-\overline{\langle\cC\rangle}} k_T = k_{123}+k_{23} = 0+1 = 1$, yielding the equality
\begin{equation}\label{eq: exampleOneOrbit}
j_{2} + j_{3} = n+1 - k_\cC = 4 + 1 - 1 = 4. 
\end{equation}
Below we list all nonnegative integer solutions to Equation \ref{eq: exampleOneOrbit} and the corresponding matrix for each solution.
\begin{align*}
    j_{2} = 4, \quad j_{3} = 0 \qquad &\longleftrightarrow \qquad A_2(4) \\
    j_{2} = 3, \quad j_{3} = 1 \qquad &\longleftrightarrow \qquad \left[A_2(3) \ast A_3(1)\right] \\
    j_{2} = 2, \quad j_{3} = 2 \qquad &\longleftrightarrow \qquad \left[A_2(2) \ast A_3(2)\right]\\
    j_{2} = 1, \quad j_{3} = 3 \qquad &\longleftrightarrow \qquad \left[A_2(1) \ast A_3(3)\right]\\
    j_{2} = 0, \quad j_{3} = 4 \qquad &\longleftrightarrow \qquad A_3(4).
\end{align*}
Second, we show that not all of these generators are \textit{minimal} generators of $I_4^\vee$. To see this, consider the two nonempty antichains properly contained in $\cC$ corresponding to $\cC_1 = \{2\}$ and $\cC_2 = \{3\}$, which have corresponding equations $j_2 = 3$ and $j_3 = 3$, respectively.
By the Divisibility Lemma \ref{lem: divisibility up to symmetry}, the monomial with exponent matrix $A_{2}(3)$ corresponding to the first of these equations will divide the monomials corresponding to the matrices $A_{2}(4)$ and $A_{2}(3)\ast A_{3}(1)$, which arise from the first two solutions of Equation (\ref{eq: exampleOneOrbit}). Similarly, the monomial with exponent matrix $A_{3}(3)$ from the second equation above will divide the monomials with exponent matrices $A_{2}(1)\ast A_{3}(3)$ and $A_{3}(4)$ corresponding to the last two solutions of Equation (\ref{eq: exampleOneOrbit}). It follows that the only minimal generator of $I_n^\vee$ arising from a solution of Equation (\ref{eq: exampleOneOrbit}) has exponent matrix
\begin{equation}\label{eq: exampleOneOrbitMatrix}
A_{2}(2) \ast A_{3}(2) = \begin{bmatrix} 0 & 0 & 0 & 0 \\ 0 & 0 & 1 & 1 \\ 1 & 1 & 0 & 0
\end{bmatrix}. 
\end{equation}
\end{example}

Example \ref{ex: one-orbit} indicates that in order to cut down $\bigcup \cG_\cC (n)$ to a \textit{minimal} generating set, we need to ignore elements of  $\cG_\cC (n)$ which are overridden by elements coming from antichains in the \emph{lower} order ideal $\lowerIdeal{\cC}$. This is true in general because of the following consequence of the Divisibility Lemma \ref{lem: divisibility up to symmetry}. 

\begin{cor} 
   \label{cor:from dibisibility lemma} 
For any  antichain $\cC$ of $2^{[c]}$, one has: 
\begin{itemize}
    \item[(a)]  No two distinct elements of $\cG_\cC(n)$ divide each other.
    \item[(b)]  If $B \in \cG_\cC(n)$ and $B' \in \cG_{\cC'}(n)$ for some antichain $\cC'$ such that $\xx^{B'}$ divides $\xx^B$ up to symmetry, then $\Supp (B')  \subseteq \lowerIdeal{\cC}$.
\end{itemize}
\end{cor}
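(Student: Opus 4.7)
The plan is to derive both parts directly from the Divisibility Lemma (\Cref{lem: divisibility up to symmetry}) applied to principal order ideals, exploiting the antichain property of $\cC$ and $\cC'$.

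For part (a), suppose $B, B' \in \cG_\cC(n)$ are distinct with $\xx^{B'}$ dividing $\xx^B$ up to symmetry. Write $B = [\Ast_{T\in\cC} A_T(j_T)]$ and $B' = [\Ast_{T\in\cC} A_T(j'_T)]$. For each nonempty $S \in \Supp(B')$ (which lies in $\cC$ by construction), apply the Divisibility Lemma to the principal order ideal $J = \langle S \rangle$. Because $\cC$ is an antichain containing $S$, we have $J \cap \cC = \{S\}$, and since both $B$ and $B'$ have supports contained in $\cC \cup \{\emptyset\}$, the inequality $\sum_{T\in J} j'_T \leq \sum_{T\in J} j_T$ collapses to $j'_S \leq j_S$. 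Combining with the trivial bound $j'_T = 0 \leq j_T$ for $T \in \cC \setminus \Supp(B')$ and the equality $\sum_{T\in \cC} j'_T = n+1-k_\cC = \sum_{T\in \cC} j_T$ forced by membership in $\cG_\cC(n)$, we conclude that $j'_T = j_T$ for every $T \in \cC$, hence $B = B'$, a contradiction.

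For part (b), let $S \in \Supp(B')$. If $S = \emptyset$, then $S \in \lowerIdeal{\cC}$ trivially since $\cC$ is nonempty. Otherwise $S \in \cC'$, so $j'_S \geq 1$. Apply the Divisibility Lemma to $J = \langle S \rangle$: because $\cC'$ is an antichain, $\langle S \rangle \cap \cC' = \{S\}$, giving $\sum_{T \in \langle S\rangle} j'_T = j'_S \geq 1$. The lemma then yields $\sum_{T \in \langle S \rangle} j_T \geq 1$, so at least one $T \in \Supp(B)$ satisfies $T \supseteq S$. Since $\Supp(B) \subseteq \cC \cup \{\emptyset\}$ and $T \supseteq S$ is nonempty, this $T$ lies in $\cC$. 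Therefore $S$ is contained in an element of $\cC$, i.e.\ $S \in \lowerIdeal{\cC}$.

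There is no serious obstacle here; the main point is a clean application of the Divisibility Lemma to singleton-generated order ideals, where the antichain property of $\cC$ and $\cC'$ ensures that the relevant sums reduce to single terms. The bookkeeping around the (possibly many) zero columns introduced by padding is handled by the observation that $\emptyset$ is automatically in $\lowerIdeal{\cC}$ and that it contributes $0$ to the relevant sums.
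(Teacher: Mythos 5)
Your proof is correct and follows exactly the route the paper intends: the corollary is stated there as an unproved consequence of the Divisibility Lemma, and your application of that lemma to the principal upper order ideals $\langle S\rangle$, using the antichain property to collapse the sums to single terms, supplies precisely the missing details.
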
 

As further preparation, we establish the following useful criterion. 

\begin{lemma}
   \label{lem:orbit intersect one-orbit case}
For nonempty antichains $\cC, \cC'$ of $2^{[c]}$ satisfying $\{ \varnothing \} \neq \cC' \subset \lowerIdeal{\cC}$ and some integer $\lambda \ge 0$, consider a matrix $B = \newMatrix{\cC}{j}$ with $j_T \in \NN_0$ and a set
\[
\Lambda = \big \{(i_S)_{S \in \cC'} \; \mid \; \sum_{S \in \cC'} i_S = \lambda, i_S \in \NN_0 \big \}. 
\]
Then there is some $(\ell_S)_{S \in \cC'} \in \Lambda$ such that $\xx^{B'}$ with $B' = [\Ast_{S \in \cC'} A_S (\ell_S)]$ divides $\xx^B$ up to symmetry if and only if 
\[
\lambda \le \sum_{T \in \cC \cap \langle \cC' \rangle} j_T. 
\]
\end{lemma}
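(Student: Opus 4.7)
The plan is to apply the Divisibility Lemma (\Cref{lem: divisibility up to symmetry}) in both directions, specializing to the order ideal $J = \langle \cC' \rangle$ (or a suitable sub--order ideal) of $2^{[c]}$. Throughout, when comparing divisibility of $\xx^{B'}$ by $\xx^B$, I tacitly pad $B'$ with zero columns so that both matrices have the same number $n = \sum_{T \in \cC} j_T$ of columns.

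For the forward direction, assume that some $(\ell_S)_{S \in \cC'} \in \Lambda$ yields $\xx^{B'}$ dividing $\xx^B$ up to symmetry. The case $\lambda = 0$ is trivial. For $\lambda \geq 1$, set $E \coloneqq \{S \in \cC' \,:\, \ell_S > 0\}$, a nonempty subset of $\Supp(B')$, and apply \Cref{lem: divisibility up to symmetry} to the order ideal $J = \langle E \rangle$. Since $\ell_T = 0$ outside $E$ (in particular outside $\cC'$), the left-hand side of the resulting inequality collapses to $\sum_{S \in E} \ell_S = \lambda$, while the right-hand side $\sum_{T \in \langle E \rangle} j_T = \sum_{T \in \langle E \rangle \cap \cC} j_T$ is bounded above by $\sum_{T \in \langle \cC' \rangle \cap \cC} j_T$ because $E \subseteq \cC'$ forces $\langle E \rangle \subseteq \langle \cC' \rangle$.

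For the converse, suppose $\lambda \leq \sum_{T \in \cC \cap \langle \cC' \rangle} j_T$. I would construct $(\ell_S)$ by an explicit bipartite matching: select any $\lambda$ columns of $B$ whose supports lie in $\cC \cap \langle \cC' \rangle$ (possible by hypothesis), and for each such column of support $T$ choose any $S \in \cC'$ with $S \subseteq T$ (which exists by definition of $\langle \cC' \rangle$). Setting $\ell_S$ to be the number of columns assigned to $S$, one clearly has $(\ell_S) \in \Lambda$. To check that the resulting $\xx^{B'}$ divides $\xx^B$ up to symmetry, I would verify the criterion of \Cref{lem: divisibility up to symmetry} for every nonempty $E \subseteq \Supp(B')$: if $\emptyset \in E$ then $\langle E \rangle = 2^{[c]}$ and the inequality reads $n \le n$; otherwise $E \subseteq \cC'$, and $\sum_{T \in \langle E \rangle} \ell_T$ equals the number of columns assigned to elements of $\langle E \rangle \cap \cC'$, while each such column has support $T \supseteq S$ for some $S \in \cC'$ and hence lies in $\langle E \rangle \cap \cC$, providing the required upper bound $\sum_{T \in \langle E \rangle \cap \cC} j_T$.

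The main obstacle is the converse, which is essentially a Hall-type matching statement: the single global inequality $\lambda \leq \sum_{T \in \cC \cap \langle \cC' \rangle} j_T$ must simultaneously guarantee all the local Hall-style conditions $\sum_{S \in E} \ell_S \leq \sum_{T \in \langle E \rangle \cap \cC} j_T$ for every $E \subseteq \cC'$. The upward-closure structure of the bipartite compatibility $\{(S,T) : S \in \cC',\, T \in \cC,\, S \subseteq T\}$ is exactly what makes this work: assigning columns of $B$ downward to $\cC'$ automatically respects every order ideal $\langle E \rangle$, since if the assigned $S$ lies in $E$ then its host $T \supseteq S$ lies in $\langle E \rangle$ as well.
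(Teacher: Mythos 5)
Your proof is correct, and the interesting half --- the converse --- takes a genuinely different route from the paper. The paper proves the converse by induction on $\#\cC'$: it peels off one element $S\in\cC'$, sets $\ell_S$ to be the minimum of $\lambda$ and a carefully chosen column count, invokes the induction hypothesis on $\cC'-S$ with budget $\lambda-\ell_S$, and then verifies the Divisibility Lemma inequalities for an arbitrary $\cD\subseteq\cC'$ by splitting $\langle\cD\rangle$ as $\langle\cD-S\rangle\sqcup(\langle S\rangle-\langle\cD-S\rangle)$ and adding the resulting inequalities. Your argument replaces all of this with a single direct construction: distribute $\lambda$ of the columns of $B$ supported in $\cC\cap\langle\cC'\rangle$ downward to elements of $\cC'$, and observe that because the assignment sends each column's support $T$ to some $S\subseteq T$, every upper order ideal $\langle E\rangle$ automatically ``sees'' at least as many host columns as assigned labels --- the injectivity of the column-to-label map is exactly the Hall condition. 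This is shorter, avoids the induction entirely, and makes the structural reason for the equivalence (upward closure of the compatibility relation) explicit; the paper's induction, by contrast, is the template it reuses later for the harder Membership Lemma in the multi-orbit case, which is presumably why it is phrased that way. Your forward direction is the same as the paper's up to the cosmetic choice of $J=\langle E\rangle$ versus $J=\langle\cC'\rangle$. One small point worth making explicit in a write-up: after padding, $\Supp(B')$ may contain $\varnothing$, but any generating set containing $\varnothing$ yields $J=2^{[c]}$, where the inequality is automatic (Remark \ref{rem:divisibility}), so restricting attention to $E\subseteq\cC'$ is legitimate --- you do note this, which is good.
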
 

\begin{proof}
If $\xx^{B'}$ divides $\xx^B$ then, applying the Divisibility Lemma \ref{lem: divisibility up to symmetry} to the order ideal $J = \langle \cC'\rangle$, we obtain
\[
\lambda = \sum_{S \in \langle \cC'\rangle} \ell_S \le \sum_{T \in \langle \cC' \rangle} j_T = \sum_{T\in \langle \cC'\rangle \cap \cC}j_T,
\]
as claimed. 

We show the converse by using induction on the cardinality of $\cC'$. If $|\cC'| = 1$ with $\cC' = \{ S\}$, then $\Lambda=\{\lambda\}$. Since$\lambda \le \sum_{T\in \langle S \rangle \cap \cC}$ by assumption, the monomial $\xx^{B'}$, with $B'=A_S(\lambda)$ divides $\xx^B$. 

Assume $|\cC'| \ge 2$. Choose any $S \in \cC$ and set 
\[
\ell_S := \min \big \{ \lambda, \ \sum_{T \in (\cC - \langle S \rangle) - (\cC \cap \langle \cC' - S \rangle)} j_T \big \}.
\]
Since $\lambda - \ell_S \ge 0$, the induction hypothesis applied to $\cC' - S$ shows there is some $(\ell_T)_{T \in \cC' - S} \in \NN_0^{|\cC'| - 1}$ such that $\sum_{T \in \cC' - S} \ell_T = \lambda - \ell_S$ and $\xx^{\tilde{B}}$ with $\tilde{B} = [\Ast_{T \in \cC' - S} A_T (\ell_T)]$ divides $\xx^B$. By the Divisibility Lemma \ref{lem: divisibility up to symmetry}, this means that for every subset $\cD \neq \varnothing$ of $\cC' - S$, one has
\begin{align}
    \label{eq:ineq induction}
\sum_{T \in \cD} \ell_T \le \sum_{T \in \cC \cap \langle \cD \rangle} j_T. 
\end{align}

We claim that $\xx^{B'}$ with $B'=\tilde{B}\ast [A_S(\ell_S)]$ divides $\xx^B$ up to symmetry.
We need to show that for every subset $\cD \neq \varnothing$ of $\cC'$, one has $\sum_{T \in \cD} \ell_T \le \sum_{T \in \cC \cap \langle \cD \rangle} j_T. $ Indeed, if $S$ is not in $\cD$, this is true by Inequality \eqref{eq:ineq induction}. If $S$ is in $\cD$ then $\cC \cap \langle \cD-S \rangle \subseteq \cC \cap \langle \cC' - S \rangle$, and so 
\begin{align}
\ell_S \le \sum_{T \in (\cC \cap \langle S \rangle) - (\cC \cap \langle \cC' - S \rangle)} j_T & \le \sum_{T \in (\cC \cap \langle S \rangle) - (\cC \cap \langle \cD - S \rangle)} j_T. 
\end{align}
Using Inequality \eqref{eq:ineq induction}, we know that 
\[
\sum_{T \in \cD - S} \ell_T \le \sum_{T \in \cC \cap \langle \cD -S \rangle} j_T.
\]
Adding the last two inequalities, we obtain 
\begin{align*}
\sum_{T \in \cD} \ell_T  & \le \sum_{T \in \cC \cap \langle \cD -S \rangle} j_T +  
 \sum_{T \in (\cC \cap \langle S \rangle) - (\cC \cap \langle \cD - S \rangle)} j_T \\
& = \sum_{T \in \cC \cap \langle \cD \rangle} j_T
\end{align*}
because $\cC \cap \langle \cD \rangle$ is the disjoint union of $\cC \cap \langle \cD -S \rangle$ and $(\cC \cap \langle S \rangle) - (\cC \cap \langle \cD - S \rangle)$. 
\end{proof}

We now apply this result to our particular generating set from Proposition \ref{prop: redundant gen set one orbit} to find  a minimal generating set. For stating the result,  we make the following definition. 
Recall that $k_{\cC} = \sum_{T\in 2^{[c]} - \overline{\langle \cC \rangle}}k_T$ (see \Cref{eq:def of kC}).

\begin{definition}\label{def: minGenSet 1 orbit}
For any antichain $\cC\subset 2^{[c]}$ with $k_{\cC}\geq 1$, set 
\begin{align}\label{eqn: minGenSet 1 orbit}
M\cG_{\cC}(n) := & \left \{ \newMatrix{\cC}{j} \;  \mid \;  j_T \geq 0, \sum_{T\in\cC} j_T = n+1-k_{\cC} \ \text{ and }   \right. \\
& \; \; \left. \sum_{T\in \cC- \ideal{\cC'}}j_T > k_{\cC'}-k_{\cC} \text{ for any antichain } \cC'\subseteq \lowerIdeal{\cC} - \{\varnothing, \cC \} \right\}.   \nonumber
\end{align}
\end{definition}

\begin{theorem}\label{cor: one orbit minimal gen set}
If $n \ge \max\{k_\cC-1 : \cC \text{ an antichain in }2^{[c]} \text{ with }  k_{\cC} \ge 1\}$, then 
the Alexander dual $I_n^\vee$ of $I_n = \ideal{\Sym(n) \cdot \xx^A}$ is minimally generated by the $\Sym (n)$-orbits of the monomials in the  set
$$
\cG (n) \coloneqq \bigcup_{\substack{\cC \text{ antichain} \\ \varnothing \neq \cC \subset 2^{[c]},  k_\cC \geq 1}} \left\{\xx^B ~\mid~ B \in M\cG_{\cC}(n)\right\}.$$ 
%for $n\gg0$. In particular, we require $n \geq \max\{k_\cC-1 : \cC \text{ an antichain in }2^{[c]}\}$.
\end{theorem}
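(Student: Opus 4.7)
The plan is to prove the two inclusions $\Sym(n)\cdot \cG(n)\subseteq G(I_n^\vee)$ and $G(I_n^\vee)\subseteq \Sym(n)\cdot \cG(n)$, where $G(I_n^\vee)$ is the minimal monomial generating set. \Cref{prop: redundant gen set one orbit} already exhibits $\bigcup_{\cC}\cG_\cC(n)$ as a (generally redundant) generating set of $I_n^\vee$, so the task is to isolate the minimal generators. The hypothesis on $n$ guarantees $n+1-k_{\cC'}\ge 0$ for every admissible antichain, so \Cref{lem:orbit intersect one-orbit case} can be invoked throughout with $\lambda=n+1-k_{\cC'}$.

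For the first inclusion, fix $B\in M\cG_\cC(n)$ and suppose $\xx^B$ is not minimal. Then some generator $\xx^{B'}$ with $B'\in \cG_{\cC_0}(n)$ strictly divides $\xx^B$; let $\cC'\subseteq \cC_0$ be the antichain of nonzero column supports of $B'$. \Cref{cor:from dibisibility lemma}(b) forces $\cC'\subseteq \lowerIdeal{\cC}$; strict divisibility rules out $\cC'=\cC$ (as that would force $\cC\subseteq \cC_0$, hence $k_{\cC_0}\le k_\cC$, giving $B'$ at least $n+1-k_\cC$ nonzero columns, contradicting strict divisibility); and $\cC'\neq \varnothing$ since $\xx^{B'}\neq 1$. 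Hence $\cC'$ is eligible in the defining condition of $M\cG_\cC(n)$, which rearranges to $\sum_{T\in \cC\cap \langle \cC'\rangle}j_T < n+1-k_{\cC'}$. On the other hand, \Cref{lem:orbit intersect one-orbit case} (necessity) applied to the divisibility yields $n+1-k_{\cC_0}\le \sum_{T\in \cC\cap \langle \cC'\rangle}j_T$; combined with $k_{\cC'}\ge k_{\cC_0}$ (from $\cC'\subseteq \cC_0$) this forces $n+1-k_{\cC'}\le \sum_{T\in \cC\cap \langle \cC'\rangle}j_T$, contradicting the preceding strict inequality.

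For the reverse inclusion, start with a minimal generator $\xx^B$; \Cref{prop: redundant gen set one orbit} realizes $\xx^B=\xx^{B_0}$ for some $B_0\in \cG_{\cC_0}(n)$ (any strict divisor in $I_n^\vee$ would contradict minimality). The subtlety is that $B_0$ need not lie in $M\cG_{\cC_0}(n)$. Equip the antichains of $2^{[c]}$ with the partial order $\cC''\preceq \cC$ defined by $\cC''\subseteq \lowerIdeal{\cC}$; antisymmetry follows from the antichain property, since $\cC\preceq \cC'\preceq \cC$ forces every $T\in \cC$ to satisfy $T\subseteq S\subseteq T'$ for some $S\in \cC'$ and $T'\in \cC$, whence $T=T'=S$. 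Let $\mathcal{V}$ denote the finite, nonempty set of antichains $\cC$ for which $\xx^B\in \Sym(n)\cdot \cG_\cC(n)$, and choose a $\preceq$-minimal $\cC\in \mathcal{V}$ with corresponding representative $B_\cC$.

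I claim $B_\cC\in M\cG_\cC(n)$. If not, some antichain $\cC'\subseteq \lowerIdeal{\cC}$ with $\cC'\neq \varnothing$, $\cC'\neq \cC$ violates the defining inequality, and nonnegativity of the left-hand side forces $k_{\cC'}\ge k_\cC\ge 1$; a rewrite yields $\sum_{T\in \cC\cap \langle \cC'\rangle}j_T\ge n+1-k_{\cC'}$. The sufficiency in \Cref{lem:orbit intersect one-orbit case} then produces $B'\in \cG_{\cC'}(n)$ with $\xx^{B'}\mid \xx^B$, and $k_{\cC'}\ge 1$ places $\xx^{B'}\in I_n^\vee$. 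Minimality of $\xx^B$ then forces $\xx^{B'}=\xx^B$, so $\cC'\in \mathcal{V}$; but $\cC'\prec \cC$, contradicting the $\preceq$-minimality of $\cC$. The principal obstacle is precisely this equality case $\xx^{B'}=\xx^B$, which blocks an immediate contradiction with minimality; the partial-order descent on $\mathcal{V}$ is what converts it into the required contradiction and is the conceptual heart of the reverse inclusion.
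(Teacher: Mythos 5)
Your argument is correct and is essentially the paper's own proof in different packaging: your first inclusion is the paper's irredundancy step (III), using the same rewriting of the $M\cG_{\cC}(n)$ condition together with the necessity half of \Cref{lem:orbit intersect one-orbit case}, and your second inclusion replaces the paper's iterated strict descent on lower order ideals $\lowerIdeal{\cC_2}\subsetneqq\lowerIdeal{\cC_1}\subsetneqq\lowerIdeal{\cC}$ in step (II) by choosing a $\preceq$-minimal antichain in $\mathcal{V}$, which is the same well-founded argument. One small point to tighten: when ruling out $\cC'=\cC$, equality of the number of nonzero columns does not by itself contradict strict divisibility — you also need that a column support lying in the antichain $\cC$ and contained in another element of $\cC$ must equal it (or simply invoke \Cref{cor:from dibisibility lemma}(a)).
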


\begin{proof} We proceed in several steps. 
\smallskip 

(I) Observe that for any element $B = \newMatrix{\cC}{j}\in M\cG_{\cC}(n)$ and any antichain $\cC'$, one has 
$$
n+1-k_\cC = \sum_{T\in \cC}j_T = \sum_{T\in \cC\cap \ideal{\cC'}}j_T + \sum_{T\in \cC- \ideal{\cC'}}j_T. 
$$
Thus, the condition 
\[
n+1-k_{\cC'} \leq \sum_{T\in\cC \cap \langle \cC' \rangle}j_T 
\]
is equivalent to 
\[
\sum_{T\in \cC- \ideal{\cC'}}j_T \le k_{\cC'}-k_{\cC}. 
\]

(II) We now show that $\cG (n)$ generates the ideal $I_n^{\vee}$. Using the generating set given in  \Cref{prop: redundant gen set one orbit}, it suffices to prove that any monomial $\xx^B$ with $B \in \cG_{\cC} (n) - M\cG_{\cC} (n)$ is divisible by a monomial in $\cG (n)$. By Step (I), the fact that $B$ is not in  $M\cG_{\cC} (n)$ implies the existence of an antichain  $\cC_1\subseteq \lowerIdeal{\cC} - \{\varnothing, \cC \}$ satisfying 
\[
n+1-k_{\cC_1} \leq \sum_{T\in\cC \cap \langle \cC_1 \rangle}j_T.  
\] 
Hence \Cref{lem:orbit intersect one-orbit case} shows that there is a matrix $B_1 = [\Ast_{S \in \cC_1} A_S (\ell_S)] \in \cG_{\cC_1} (n)$ such that $\xx^{B_1}$ divides $\xx^B$. Note that $\cC_1 \neq \cC$ implies $\lowerIdeal{\cC_1} \subsetneqq \lowerIdeal{\cC}$. 

If $B_1$ is not in $M\cG_{\cC_1} (n)$,  repeating the above argument gives the existence  of a matrix $B_2 = [\Ast_{S \in \cC_2} A_S (\ell_S)] \in \cG_{\cC_2} (n)$ for some antichain  $\cC_2\subseteq \lowerIdeal{\cC_1} - \{\varnothing, \cC_1 \}$   such that $\xx^{B_2}$ divides $\xx^{B_1}$. Moreover, we have strict inclusions
\[
\lowerIdeal{\cC_2} \subsetneqq \lowerIdeal{\cC_1} \subsetneqq \lowerIdeal{\cC}. 
\]
Since any such decreasing chain is finite, repeating the above argument sufficiently often we eventually get an antichain $\tilde{\cC} \neq \varnothing$ and some $\tilde{B} \in M\cG_{\tilde{\cC}} (n)$ such that $\xx^{\tilde{B}}$ divides $\xx^B$. Since 
$\xx^{\tilde{B}}$ is in $\cG (n)$, this proves that $\cG (n)$ is a generating set of $I_n^{\vee}$. 
\smallskip 

(III) Finally, we show that $\cG (n)$ minimally generates $I_n^{\vee}$, that is, $\cG (n)$ does not contain any redundant monomials. 

Suppose there are monomial $\xx^B  \neq \xx^{B'}$ in  $\cG (n)$ with $B = [\Ast_{T \in \cC} A_S (j_T)] \in M\cG_{\cC} (n)$ and $B' =  [\Ast_{T \in \tilde{\cC}} A_S (i_T )]\in M\cG_{\tilde{\cC}} (n)$ for some nonempty antichains  $\cC, \tilde{\cC}$ with $k_{\cC} \ge 1$ and $k_{\tilde{\cC}} \ge 1$ such that  $\xx^{B'}$ divides $\xx^{B}$. 
%\Cref{cor:from dibisibility lemma} gives $\cC \neq \tilde{\cC}$. NEEDED ??
Hence,  $\cC' : = \Supp (B')$ is a nonempty antichain,  and $\cC' \subseteq \tilde{\cC}$ implies $k_{\cC'} \ge k_{\tilde{\cC}}  \ge 1$. 
Since $\xx^{B'}$ divides $\xx^B$ we obtain from the Divisibility Lemma \ref{lem: divisibility up to symmetry} applied with 
$J = \langle \cC' \rangle$  that  $\cC'  \subset \lowerIdeal{\cC}$ and 
\[
n+1-k_{\cC'} \le n+1 - k_{\tilde{\cC} } =  \sum_{T \in  \tilde{\cC}}  i_T  =  \sum_{T \in  \cC' } i_T \le  \sum_{T\in \langle \cC'\rangle \cap \cC}j_T.   
\]
It follows that $B$ is not in $M\cG_{\cC} (n)$. This is a contradiction and concludes the argument. 
\end{proof}

We now simplify the description of $M\cG_{\cC}(n)$ by showing that it suffices to test the  condition in \Cref{def: minGenSet 1 orbit} for fewer antichains $\cC'$. 

\begin{prop}\label{prop: simplified MG_C}
If $\cC\subset 2^{[c]}$ is an antichain with $k_\cC\geq 1$ then one has 
\begin{align}\label{eqn: minGenSet 1 orbit}
M\cG_{\cC}(n) = & \left \{ \newMatrix{\cC}{j} \;  \mid \;  j_T \geq 1, \sum_{T\in\cC} j_T = n+1-k_{\cC} \ \text{ and }   \right. \\
& \hspace*{2cm} \left. \sum_{T\in \cC- \cD} j_T > k_{\cD}-k_{\cC} \text{ for any subset } \varnothing \neq \cD \subsetneqq \cC \right\}.   \nonumber
\end{align}
\end{prop}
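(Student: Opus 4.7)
The plan is to establish the equality by proving both inclusions, converting the general antichains $\cC' \subseteq \lowerIdeal{\cC}$ appearing in \Cref{def: minGenSet 1 orbit} into subsets $\cD \subsetneqq \cC$, and upgrading the positivity $j_T \ge 0$ to $j_T \ge 1$ through a particular specialization of the defining conditions.

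For the inclusion $M\cG_\cC(n) \subseteq \mathrm{RHS}$, take $B = \newMatrix{\cC}{j} \in M\cG_\cC(n)$. Any nonempty proper subset $\cD \subsetneqq \cC$ is itself an antichain of $\lowerIdeal{\cC}$ distinct from $\varnothing$ and $\cC$, so plugging $\cC' = \cD$ into the defining inequality and noting that $\cC - \langle \cD \rangle = \cC - \cD$ (since $\cC$ is an antichain, elements of $\cC$ lie in $\langle \cD\rangle$ only if they belong to $\cD$ itself) gives the simplified inequality. To upgrade $j_T \geq 0$ to $j_T \geq 1$ when $|\cC| \geq 2$, I would specialize the defining condition to $\cC' = \cC - \{S_0\}$ for each $S_0 \in \cC$: the antichain property forces $\cC - \langle \cC - \{S_0\} \rangle = \{S_0\}$, so the inequality reads $j_{S_0} > k_{\cC - \{S_0\}} - k_\cC$. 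Monotonicity of $\cE \mapsto k_\cE$ under inclusion of the upper ideals $\langle \cE\rangle$ (the same feature used in the proof of the Exchange Lemma) yields $k_{\cC - \{S_0\}} \geq k_\cC$, hence $j_{S_0} \geq 1$. The case $|\cC| = 1$ is immediate from $j_{S_0} = n+1-k_\cC \geq 1$ under the hypothesis of \Cref{cor: one orbit minimal gen set}.

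For the reverse inclusion, fix $B$ in the right-hand side and any antichain $\cC' \subseteq \lowerIdeal{\cC} - \{\varnothing, \cC\}$. Set $\cD := \cC \cap \langle \cC' \rangle \subseteq \cC$. Since $\cC'$ is a nonempty subset of $\lowerIdeal{\cC}$, each $T \in \cC'$ lies below some $S_T \in \cC$, and that $S_T$ belongs to $\cD$, so $\cD$ is nonempty. The antichain property of $\cC$ again gives $\cC - \langle \cC' \rangle = \cC - \cD$. Using $\cD \subseteq \langle \cC' \rangle$ we get $\langle \cD \rangle \subseteq \langle \cC' \rangle$ and hence $k_\cD \geq k_{\cC'}$ by the same monotonicity. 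When $\cD \subsetneqq \cC$, the simplified condition at $\cD$ yields $\sum_{T \in \cC - \cD} j_T > k_\cD - k_\cC \geq k_{\cC'} - k_\cC$, which is exactly the required defining inequality.

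The main obstacle will be the boundary case $\cD = \cC$, in which the defining inequality collapses to the $B$-independent condition $0 > k_{\cC'} - k_\cC$. Here $\cC' \neq \cC$ together with $\cC \subseteq \langle \cC'\rangle$ forces some $T_0 \in \cC' - \cC$ strictly contained in some $S_0 \in \cC$, producing a strict inclusion $\langle \cC\rangle \subsetneqq \langle \cC'\rangle$. The delicate step is to upgrade this to the strict numerical inequality $k_{\cC'} < k_\cC$ --- equivalently, to exhibit a column of the generating matrix $A$ whose support lies in the set difference $\overline{\langle \cC'\rangle} - \overline{\langle \cC\rangle}$. I expect to close this gap by exploiting the hypothesis $k_\cC \geq 1$ together with the structural role of $A$, most likely via an inductive reduction that replaces $\cC'$ by a carefully chosen sub-antichain of $\cC$ whose simplified inequality already subsumes the constraint at $\cC'$, thereby collapsing the case $\cD = \cC$ into the case $\cD \subsetneqq \cC$ that was handled above.
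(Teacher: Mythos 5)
Your decomposition into the cases $\cD=\cC\cap\ideal{\cC'}\subsetneqq\cC$ and $\cD=\cC$ is exactly right, and everything you do in the first case (including upgrading to $j_T\ge 1$ via $\cD=\cC-\{S_0\}$ and the monotonicity $k_{\cD}\ge k_{\cC'}$ when $\ideal{\cD}\subseteq\ideal{\cC'}$) coincides with the paper's own argument. The problem is precisely the boundary case you flag at the end: it cannot be collapsed into the case $\cD\subsetneqq\cC$, because for antichains $\cC'\subseteq\lowerIdeal{\cC}$ with $\cC'\neq\cC$ and $\cC\subseteq\ideal{\cC'}$ the required inequality $0>k_{\cC'}-k_{\cC}$ is a $B$-independent constraint that the right-hand side of \Cref{prop: simplified MG_C} simply omits, and it does not follow from $k_\cC\ge 1$. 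The paper's own proof silently skips this case (it only fixes $\cD\neq\cC$), and the paper's own \Cref{ex: polynomialBoundSharp}(i) shows the statement fails without it: there $c=2$, $A=A_{\{1,2\}}(1)$, $\cC=\{\{1,2\}\}$, $\cC'=\{\{1\}\}$, and $k_{\cC'}=k_{\cC}=1$, so \Cref{def: minGenSet 1 orbit} gives $M\cG_{\cC}(n)=\varnothing$ (as the table in that example records), whereas the right-hand side of \Cref{prop: simplified MG_C} is the nonempty set $\{A_{\{1,2\}}(n)\}$; and indeed $\xx^{A_{\{1,2\}}(n)}$ is not a minimal generator of $I_n^\vee$, being properly divisible by $\xx^{A_{\{1\}}(n)}$.

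So the "inductive reduction" you hope for cannot exist: the case $\cD=\cC$ must be kept as an extra hypothesis. The corrected statement is that $M\cG_{\cC}(n)$ equals the displayed set provided $k_{\cC'}<k_{\cC}$ for every antichain $\cC'\subseteq\lowerIdeal{\cC}$ with $\cC'\neq\cC$ and $\cC\subseteq\ideal{\cC'}$, and $M\cG_{\cC}(n)=\varnothing$ otherwise; with that amendment both of your inclusions go through as written. One minor additional point: for $|\cC|=1$ the bound $n\ge k_{\cC}-1$ from \Cref{cor: one orbit minimal gen set} only yields $j_{S_0}\ge 0$, so you need $n\ge k_{\cC}$ to get $j_{S_0}\ge 1$; this is harmless but should be said.
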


\begin{proof} 
Fix a subset $\cD \neq \varnothing$  with $\cD \neq \cC$ and consider the set $\cU$ of antichains  $\cC'\subseteq \lowerIdeal{\cC} - \{\varnothing, \cC\}$ with $\cD = \cC \cap \langle \cC' \rangle$. Note that $\cD$ is in $\cU$. Furthermore, for any $\cC' \in \cU$, one has $\cC - \ideal{\cC'} = \cC - \cD$ and $\cD \subseteq \ideal{\cC'}$.  The latter containment gives $\ideal{\cD} \subseteq \ideal{\cC'}$, which implies 
$k_{\cD} \ge k_{\cC'}$. Hence, for $\cC' \in \cU$, the condition $\sum_{T\in \cC- \ideal{\cC'}}j_T > k_{\cC'}-k_{\cC}$ is strongest if $\cC' = \cD$.

It remains to show that $\Supp(\newMatrix{\cC}{j})=\cC$ for all $\newMatrix{\cC}{j}\in M\cG_{\cC}(n)$. If $j_T=0$ for some $T\in \cC$, then, setting $\cD=\cC-\{T\}$, we have
$\sum_{S\in\cC-\cD}j_S=0$ on the one side and  $k_{\cD}-k_{\cC}\geq 0$ on the other side, which is a contradiction to the second condition.
\end{proof}

We illustrate the above results by considering again \Cref{ex: one-orbit}. 

% \begin{example}
% Observe that it is possible for $M\cG_\cC$ to be empty. Recall the matric $A$ from Example \ref{ex: one-orbit} and consider the antichain $\cC = \{123\}$. Then $k_\cC = 0$ and
% % Then $M\cG_\cC$ 
% % corresponds to solutions to the equation
% % $$
% % j_{111} = 2
% % $$
% % subject to the inequalities
% % $$
% % j_{110} > 0, \quad j_{101} > 0, \quad j_{011} > 0.
% % $$
% % In particular, there are \textit{no} integer solutions to this system of inequalities,
% so $M\cG_\cC$ is empty. 
% \sasha{I think this $M\cG_\cC$ is not empty. The $j_S$ should sum to $n-2$, since $k_\cC=3$. When $\cC = \{111\}$, then $k_\cC = 0$ and $M\cG_\cC(n)$ will be empty in this case, for any $n$.} \ayah{Thanks, I will update this soon}
%
% \end{example}

\begin{example}
Recall the matrix $A$ from Example \ref{ex: one-orbit}.
One can now completely describe the set of minimal generators of $I_n^\vee=\langle \Sym(n)\cdot \xx^A\rangle^\vee$ as follows: The minimal generators fall into four classes. For simplicity, we only specify those antichains $\cC$ that contribute minimal generators and the relevant conditions for specifying matrices in $M\cG_{\cC} (n)$. 
\begin{itemize}
    \item[(1)] The antichain $\cC = \{12,13,23\}$ gives the equation $j_{12}+j_{13}+j_{23}=n-2$. There are $\binom{n}{2}$ orbits of  such minimal generators.
    \item[(2)] The antichains of the form $\cC = \{i\}$ for $i\in [3]$ correspond to the equation $j_{i}=n-1$.  There are three orbits of  such minimal generators.
    \item[(3)] The antichains of the form $\cC = \{i,123- i\}$ with $i \in [3]$ correspond to the equation $j_i+j_{\{123\}- \{i\}}=n-1$ and the inequalities $j_i\geq 2$, $j_{\{123\}- \{i\}}\geq 1$. There are $3(n-3)$ orbits of  such minimal generators.
    \item[(4)] The antichains of the form $\cC = \{i,k\}$ with $1\leq i< k\leq 3$ correspond to the equation $j_i+j_k=n$ and the inequalities $j_i\geq 2$ and $j_k\geq 2$. There are $3(n-3)$ orbits of  such minimal generators.
\end{itemize}
In total, the number of orbits of minimal generators of $I_n^\vee$ equals $\frac{n^2}{2}+\frac{11}{2}n-15$.

\end{example}

%%%%%%%%%%%%%%%%%%%%%%%%%%%%%%%%%%%%%%%%%%%%%%%%%%%%%%%%%
\section{The General Case}\label{sec: generalCase}
%%%%%%%%%%%%%%%%%%%%%%%%%%%%%%%%%%%%%%%%%%%%%%%%%%%%%%%%%

Given any $\Sym$-invariant chain $(I_n)_{n \in \NN}$ of squarefree monomial ideals, we produce a generating set of the Alexander dual $I_n^{\vee}$ for any $n \gg 0$. We proceed in several steps. Using the results of the previous section, we first obtain a rather redundant generating set. In a second step, we trim down this set by identifying additional properties of any minimal generator of $I_n^{\vee}$. Since the resulting generating set is still not necessarily minimal, we need to adjust it to achieve minimality.

\begin{notation} 
    \label{nota:set-up general case}
Throughout this section, fix positive integers $c$ and $m$ as well as $0 - 1$-matrices $A_1,\ldots,A_s$ of the form 
$$
A_i = \left[\Ast_{\varnothing \neq T\in 2^{[c]}} A_T (k_T^{(i)})\right] 
$$
with $c$ rows and at most $m$ columns, where $k_T^{(i)}\in \mathbb{N}_0$. Furthermore, let $(I_n)_{n \in \NN}$ be the $\Sym$-invariant chain of ideals generated by $\xx^{A_1},\dots, \xx^{A_s}$, that is, $I_n = 0$ if $n < m$ and 
$I_n = \langle \Sym(n)\cdot(\xx^{A_1},\dots, \xx^{A_s}) \rangle \subset R_n = \mathbf{k}[x_{i,j} \mid i\in [c], j\in [n] ]$ if $n \ge m$. 
When thinking of $A_i$ as an exponent matrix of a monomial in $R_n$, we abuse notation and follow our convention by considering $A_i$ as a $c \times n$ matrix padded with zero columns, that is, we use 
$$
A_i = \left[\Ast_{T\in 2^{[c]}} A_T (k_T^{(i)})\right] \in \{0,1\}^{c\times n}
$$
with $k_{\varnothing}^{(i)} = n - \sum_{\varnothing \neq T\in 2^{[c]}} k_T^{(i)}$. 
\end{notation}

We now outline a construction that produces a generating set of $I_n^{\vee}$. 

\begin{construction}\label{const:generalCase} 
Let $n \ge m$ and consider a matrix
$$
B = \left[\Ast_{T\in 2^{[c]}} A_T(\ell_T)\right] \in \{0,1\}^{c\times n}
$$ 
with $\ell_T\in\mathbb{N}_0$. Using the fact that $I_n^\vee = \bigcap_{i\in [s]} \left\langle \Sym(n)\cdot \xx^{A_i}\right\rangle^\vee$ and the Alexander Dual Membership Criterion, it follows that 
$\displaystyle{\xx^B}$ is in  $I_n^\vee$  if and only if for every $i\in [s]$, there is an order ideal  $\varnothing  \neq J_i \subsetneqq 2^{[c]}$ with
$$
    \sum_{T\in J_i} k_T^{(i)} > \sum_{T\in J_i} \ell_{T^C}.
$$
Hence, to obtain a generating set of $I_n^\vee$, fix any $s$-tuple $\jJ = (J_1,\dots, J_s)$ of proper order ideals of $2^{[c]}$, and observe that it determines a set of monomials $\xx^B\in I_n^\vee$ satisfying the system of inequalities
\begin{equation}\label{eq: generalIneqs}
    \sum_{T\in J_i} k_T^{(i)} > \sum_{T\in J_i} \ell_{T^C} = \sum_{S\in \overline{J}_i} \ell_S,  \quad i\in [s].
\end{equation}

Define $\cR \coloneqq \overline{J}_1\cup\dots \cup \overline{J}_s$. 
Now, fix a solution $L = (\ell_S \; \mid \; S\in \cR) \in \NN_0^{|\cR|}$ of the system of inequalities (\ref{eq: generalIneqs}). In order to describe a matrix $B$ with $\xx^B \in I_n^\vee$, it remains to determine the columns of $B$ whose support is not in $\cR$. Let $\cC_\jJ$ be the antichain generating $2^{[c]}- \cR$ which is an order ideal by \Cref{lem: complement of order ideal}. By a similar argument as in the proof of part (a) of the Exchange \Cref{lem: exchangeLemma}, it follows that for the description of a minimal generating set of $I_n^\vee$ it suffices to consider matrices $B$ whose columns (with support not in $\cR$) are supported on minimal elements of $2^{[c]}- \cR$, i.e.,  $\ell_T = 0$ if $T\notin \cC_\jJ$.
\end{construction}

The following result is now immediate.

\begin{prop}\label{prop: generalGenSet} Adopt the notation and hypotheses of Construction \ref{const:generalCase}.
For $n \ge m$, the ideal $I_n^\vee$ is generated by the $\Sym(n)$-orbits of monomials $\xx^B$ with $B$ in the set
\begin{equation}\label{eq: generalCaseRedundant}
\bigcup_{\substack{\jJ = (J_1,\dots, J_s) \\ \varnothing \neq J_i \subsetneqq 2^{[c]}}} \left\{ \left[ \Ast_{S\in \cR} A_S(\ell_S)\right] \ast \left[\Ast_{S\in \cC_\jJ} A_S(\ell_S)\right] \; \;  \mid  \;\; \substack{\ell_S \ge 0, \ L = (\ell_S \; \mid \;  S\in \cR) \text{ a solution of } (\ref{eq: generalIneqs}),\\ \sum_{S\in \cC_\jJ} \ell_S = n-\sum_{S\in \cR} \ell_S} \right\}.
\end{equation}
\end{prop}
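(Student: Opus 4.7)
The proof formalizes the reasoning sketched in Construction \ref{const:generalCase}, which I would carry out in two steps.

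First, I would verify that every monomial $\xx^B$ with $B$ in the stated set lies in $I_n^\vee$. Since $I_n^\vee = \bigcap_{i \in [s]}\langle \Sym(n) \cdot \xx^{A_i}\rangle^\vee$, the Alexander Dual Membership Criterion (\Cref{coro: inequalities for alexander dual}) reduces the claim to producing, for each $i \in [s]$, a proper nonempty order ideal of $2^{[c]}$ satisfying the inequality of \eqref{eq: generalIneqs}. For a matrix $B$ in the set indexed by the tuple $\jJ = (J_1, \ldots, J_s)$, the order ideal $J_i$ itself serves as such a witness by construction.

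Second, to prove generation, I would show that every monomial $\xx^B \in I_n^\vee$ is divisible, up to symmetry, by some monomial in the set. Applying \Cref{coro: inequalities for alexander dual} to each single-orbit ideal $\langle \Sym(n) \cdot \xx^{A_i}\rangle$ yields proper nonempty order ideals $J_i \subsetneqq 2^{[c]}$ with
\[
\sum_{T \in J_i} k_T^{(i)} > \sum_{T \in J_i} \ell_{T^C}.
\]
Assemble these into a tuple $\jJ$, let $\cR = \overline{J}_1 \cup \cdots \cup \overline{J}_s$, and observe that $2^{[c]} - \cR = \bigcap_i (2^{[c]} - \overline{J}_i)$ is an upper order ideal (by \Cref{lem: complement of order ideal} together with closure of upper order ideals under intersection), generated by its antichain $\cC_\jJ$ of minimal elements.

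Next, I would modify $B$ column by column to exhibit a divisor of the required form: leave every column of $B$ whose support lies in $\cR$ unchanged, and for each column whose support $T$ lies in $2^{[c]} - \cR$, replace it by a column supported on some $T' \in \cC_\jJ$ with $T' \subseteq T$ (which exists since $\cC_\jJ$ generates $2^{[c]} - \cR$ as an upper order ideal). The resulting matrix $B'$ divides $B$ coordinatewise, still has $n$ columns, and retains the same $\cR$-column counts as $B$, so the system \eqref{eq: generalIneqs} continues to hold. A permutation $\sigma \in \Sym(n)$ then rearranges the columns of $B'$ into the canonical form $\left[\Ast_{S \in \cR} A_S(\ell_S)\right] \ast \left[\Ast_{S \in \cC_\jJ} A_S(\ell_S)\right]$ required by the set in \eqref{eq: generalCaseRedundant}, yielding a matrix $B''$ in that set with $\xx^{B''}$ dividing $\xx^B$ up to symmetry.

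The one delicate step is verifying that these column replacements do not disturb \eqref{eq: generalIneqs}: this is immediate because the right-hand side $\sum_{T \in J_i} \ell_{T^C} = \sum_{S \in \overline{J}_i} \ell_S$ of each inequality counts only columns whose support lies in $\overline{J}_i \subseteq \cR$, and those columns are explicitly left untouched by the reduction. This confirms that the displayed set generates $I_n^\vee$ and completes the proof.
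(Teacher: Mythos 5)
Your proposal is correct and follows essentially the same route as the paper: the paper treats the proposition as an immediate consequence of Construction \ref{const:generalCase}, which likewise combines the Membership Criterion applied to $I_n^\vee = \bigcap_i \langle \Sym(n)\cdot\xx^{A_i}\rangle^\vee$ with an Exchange-Lemma-style column replacement pushing supports outside $\cR$ down to the minimal elements $\cC_\jJ$. Your write-up merely makes explicit the divisibility and the preservation of the inequalities \eqref{eq: generalIneqs}, which the paper leaves implicit.
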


The above generating set is typically not minimal. The following example gives a hint on how to trim down the generating set. 

\begin{example}\label{ex: two-Orbit} Fix matrices
$$
A_1 = \begin{bmatrix} 1 & 1& 1 \\ 1 & 1& 0 \\ 0 & 0 & 1
\end{bmatrix}, \qquad 
A_2 = \begin{bmatrix} 1 & 0 & 0 \\ 1 & 1 & 1 \\ 0 & 1 & 1
\end{bmatrix}
$$
and consider the ideal $I_n = \langle \Sym(n)\cdot \xx^{A_1}, \; \Sym(n)\cdot \xx^{A_2}\rangle$. For order ideals $J_1 = \langle 12,13\rangle $ and $J_2 = \langle 12, 23\rangle$ form the tuple $\jJ = (J_1, J_2)$. Here, we omit commas and parentheses around and between elements of a set. In this case, $\cR = \overline{J}_1 \cup \overline{J}_2 = \{\varnothing, 1,2,3\}$ and $2^{[c]} - \cR$ is generated by the antichain $\cC_\jJ = \{12, 13, 23\}$.
The system of equations corresponding to Equation (\ref{eq: generalIneqs}) for this choice of $\jJ$ is
\begin{align*}
    k_{12}^{(1)} + k_{13}^{(1)} + k_{123}^{(1)} = 3 &> \ell_3 + \ell_2 + \ell_\varnothing \\
    k_{12}^{(2)} + k_{23}^{(2)} + k_{123}^{(2)} = 3 &> \ell_3 + \ell_1 + \ell_\varnothing.
\end{align*}
In order to avoid exponent matrices from (\ref{eq: generalCaseRedundant}) arising from solutions of the above inequalities that give non-minimal generators, we maximize the number of zero columns by setting $\ell_\varnothing$ equal to $2$. This forces $\ell_1 = \ell_2 = \ell_3 = 0$ and
$$
\sum_{S\in \cC_\jJ} \ell_S = \ell_{12} + \ell_{13} + \ell_{23} = n - \sum_{S\in \cR} \ell_S = n - 2.
$$
For instance, in the case where $n = 4$, this equation gives the following $6$ orbit generators of exponent matrices for monomials in $I_4^{\vee}$:
\begin{align*}
& \begin{bmatrix}
0 & 0 & 1 & 1 \\ 0 & 0 & 1 & 1 \\ 0 & 0 & 0 & 0
\end{bmatrix}, \quad
\begin{bmatrix}
0 & 0 & 1 & 0 \\ 0 & 0 & 1 & 1 \\ 0 & 0 & 0 & 1
\end{bmatrix}, \quad
\begin{bmatrix}
0 & 0 & 1 & 1 \\ 0 & 0 & 1 & 0 \\ 0 & 0 & 0 & 1
\end{bmatrix}, \quad \\
& \begin{bmatrix}
0 & 0 & 1 & 0 \\ 0 & 0 & 0 & 1 \\ 0 & 0 & 1 & 1
\end{bmatrix}, \quad
\begin{bmatrix}
0 & 0 & 1 & 1 \\ 0 & 0 & 0 & 0 \\ 0 & 0 & 1 & 1
\end{bmatrix}, \quad
\begin{bmatrix}
0 & 0 & 0 & 0 \\ 0 & 0 & 1 & 1 \\ 0 & 0 & 1 & 1
\end{bmatrix}.
\end{align*}
\end{example}

% For the following lemma, we return to the setting of computing Alexander duals for Sym-invariant squarefree monomial ideals as in Construction \ref{const:generalCase}. The goal is to identify one class of matrices appearing in the generating set (\ref{eq: generalCaseRedundant}) giving redundant generators of the Alexander dual. 

We need some notation in order to identify non-minimal generators of $I_n^{\vee}$. 

\begin{notation}\label{not: restrictionLemma} For any $s$-tuple $\jJ = (J_1,\dots, J_s)$ of proper order ideals of $2^{[c]}$ and any $\Lambda\subseteq [s]$, define $D_{\jJ,\Lambda}$ to be the set of minimal elements of 
$$\bigcap_{k\in \Lambda} \overline{J}_k - \bigcup_{k\in [s]-\Lambda} \overline{J}_k$$ if $\Lambda\neq \varnothing$, and the  set of minimal elements of
$$
2^{[c]} - \overline{J}_1\cup\dots\cup\overline{J}_s = 2^{[c]} - \cR
$$
if $\Lambda = \varnothing$.
\end{notation}

\begin{remark}\label{rem: DspecialCases}
Note that $D_{\jJ,[s]} = \{\varnothing\}$ and $D_{\jJ,\varnothing}$ is the antichain generating the order ideal $2^{[c]} -\cR$.
\end{remark}

The following result is a first pass at a description of the minimal generators of $I_n^\vee$.

\begin{lemma}[Restriction Lemma]\label{lem: restrictionLemma} 
If $\xx^B$ is a minimal generator of $I_n^\vee$, then $B$ is of the form
$$
B = \left[ \Ast_{S\in F} A_S(\ell_S)\right]\ast\left[\Ast_{S\in \cC} A_S(\ell_S)\right] \in \{0,1\}^{c\times n}, 
$$
where:
\begin{enumerate}[(i)]
   
    \item $\cC = D_{\jJ,\varnothing}$ for some $s$-tuple of proper order ideals $\jJ$;
    \item $\displaystyle{F\subseteq \sqcup_{\varnothing\neq\Lambda\subseteq[s]} D_{\jJ,\Lambda}}$;
    \item $F\cap \langle \cC\rangle = \varnothing$; 
    \item $F\subset \Supp B$, i.e., $\ell_S > 0$ if $S \in F$; and
     \item setting $\ell_S = 0$ for any $S\in \cR - F$, the $|\cR|$-tuple  $(\ell_S : S\in \cR) \in \NN_0^{|\cR|}$ is a solution of the system of inequalities (\ref{eq: generalIneqs}). 
\end{enumerate}
\end{lemma}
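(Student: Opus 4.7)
The plan is to extract witness order ideals from the hypothesis $\xx^B \in I_n^\vee$ and then repeatedly apply an Exchange-Lemma-style argument to pin down the supports of the columns of $B$.

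First, since
\[
I_n^\vee \;=\; \bigcap_{i \in [s]} \left\langle \Sym(n) \cdot \xx^{A_i} \right\rangle^\vee,
\]
Proposition \ref{coro: inequalities for alexander dual} applied to each orbit generator yields, for every $i \in [s]$, a proper nonempty order ideal $J_i \subsetneq 2^{[c]}$ with
\[
\sum_{T\in J_i} k_T^{(i)} \;>\; \sum_{T\in J_i} \ell_{T^C} \;=\; \sum_{S\in \overline{J}_i} \ell_S.
\]
Set $\jJ := (J_1,\ldots,J_s)$, and $\cR := \overline{J}_1 \cup\cdots\cup \overline{J}_s$, which is a lower order ideal of $2^{[c]}$, being a union of such (by Lemma \ref{lem: complement of order ideal}). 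Let $\cC := D_{\jJ,\varnothing}$; the order ideal generated by $\cC$ is then precisely $2^{[c]}-\cR$ because $\cC$ is by definition its antichain of minimal elements. Finally let $F := \Supp B \cap \cR$, and write $\ell_S$ for the number of columns of $B$ supported on $S$.

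With these choices, conditions (iv) and (v) will be immediate: $\ell_S>0$ holds for $S\in F$ by the very definition of $F$, while padding $\ell_S = 0$ on $\cR - F$ simply records the true exponents of $B$ on all of $\cR$, so that the displayed inequalities are precisely the system (\ref{eq: generalIneqs}). Once (ii) is proved, condition (iii) will be automatic, since (ii) places $F$ inside $\cR$ whereas $\langle \cC \rangle = 2^{[c]} - \cR$.

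The substance of the argument is therefore to establish (i) and (ii), namely that every column support $S$ of $B$ lies either in $\cC$ (when $S \notin \cR$) or in $D_{\jJ,\Lambda}$ for $\Lambda := \{i \in [s] \mid S \in \overline{J}_i\}\neq \varnothing$ (when $S \in \cR$). Both reduce to a single uniform exchange argument. Suppose $S \in \Supp B$ is not minimal in its designated stratum
\[
\bigcap_{k\in\Lambda}\overline{J}_k \;-\; \bigcup_{k\in[s]-\Lambda}\overline{J}_k,
\]
where the case $\Lambda=\varnothing$ is interpreted as the stratum $2^{[c]}-\cR$. Choose $S' \subsetneq S$ in the same stratum and form $B'$ from $B$ by replacing one column with support $S$ by a column with support $S'$. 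Then $\xx^{B'}$ properly divides $\xx^B$. The key check is that $\xx^{B'}$ still lies in $I_n^\vee$, for which I reuse the witness $\jJ$: for every $i \in [s]$, the elements $S$ and $S'$ have identical membership in $\overline{J}_i$ (both in exactly when $i \in \Lambda$), so the sum $\sum_{U\in\overline{J}_i}\ell_U$ is unchanged by the exchange and each strict inequality of (\ref{eq: generalIneqs}) persists. Hence $\xx^{B'} \in I_n^\vee$, contradicting minimality of $\xx^B$. This forces each $S$ into its stratum's antichain of minimal elements, giving (i) and (ii).

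I anticipate no serious technical obstacle beyond the bookkeeping; the main conceptual point is that stratifying $\cR$ by the set $\Lambda$ of witnessing indices is precisely what makes within-stratum column exchanges invisible to all witness inequalities simultaneously, so one uniform exchange step handles every case and parallels cleanly the one-orbit Exchange Lemma \ref{lem: exchangeLemma}.
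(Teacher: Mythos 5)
Your proof is correct and follows essentially the same route as the paper: the paper obtains (i), (v), and $\Supp B \subseteq \cR \cup \cC$ from Proposition \ref{prop: generalGenSet} (whose content is exactly your extraction of the witness tuple $\jJ$ from the membership criterion), and then establishes (ii) by the same stratification of $\cR$ into the sets $\bigcap_{k\in\Lambda}\overline{J}_k - \bigcup_{k\in[s]-\Lambda}\overline{J}_k$ together with the within-stratum column exchange that you spell out. Your write-up just makes explicit the step the paper compresses into ``by a similar argument as in part (a) of the Exchange Lemma,'' namely that an exchange $S \rightsquigarrow S'\subsetneq S$ inside a stratum leaves every sum $\sum_{U\in\overline{J}_i}\ell_U$ unchanged and hence preserves membership in $I_n^\vee$.
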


\begin{proof} 
Proposition \ref{prop: generalGenSet} implies Properties $(i)$ and $(v)$ as well as $\Supp B \subseteq \cR \cup \cC$.  
Next we establish Property $(ii)$. Note that
$$
\cR = \overline{J}_1\cup\dots \cup \overline{J}_s = \bigsqcup_{\varnothing\neq\Lambda \subseteq [s]} \left(\bigcap_{k\in\Lambda} \overline{J}_k - \bigcup_{k\in [s] - \Lambda} \overline{J}_k\right).
$$
By a similar argument as in the proof of part (a) of the Exchange Lemma \ref{lem: exchangeLemma}, we see that $\Supp B \subseteq F \sqcup \cC$ for some 
$F\subseteq \sqcup_{\varnothing\neq \Lambda\subseteq [s]} D_{\jJ,\Lambda}$. As making $F$ smaller does not affect (ii) and (iii), we may assume that $F\subset \Supp B$.

 Property (iii) follows from the following chain of containments
$$
F \subseteq \sqcup_{\varnothing\neq\Lambda\subseteq[s]} D_{\jJ,\Lambda} \subseteq \overline{J}_1\cup\cdots\cup\overline{J}_s=\cR=2^{[c]}-\langle \cC\rangle.
$$
%
%Suppose there is some $S\in F\cap \langle \cC\rangle$. Thus, there is  some set $\Lambda\neq \varnothing$ such that minimal  $S$ is minimal in
%$$
%\bigcap_{k\in \Lambda} \overline{J}_k - \bigcup_{k \in [s]-\Lambda} \overline{J}_k. 
%$$
%Moreover, there is some $T\in \cC$ with $T\subseteq S$. It follows that
%$$
%S\in \langle\cC\rangle = 2^{[c]} - \bigcup_{k\in [s]} \overline{J}_k,
%$$
%contradicting $S \in \bigcap_{k\in\Lambda} \overline{J}_k$. Therefore, we have that $F\cap\langle\cC\rangle = \varnothing$, as desired.
\end{proof}

\begin{notation}\label{not: almostMinimalGenSet} Recall the definition of $D_{\jJ, \Lambda}$ from Notation \ref{not: restrictionLemma}. Denote by $\cP$ the set of pairs $(\cF, \cC)$ such that there is an $s$-tuple $\jJ = (J_1,\dots, J_s)$ of proper order ideals of $2^{[c]}$, where $\cC = D_{\jJ,\varnothing}$ is an antichain and $\cF = (\ell_S \in \NN \; \mid \;  S\in F)$ for some subset $F \neq \varnothing$ of $2^{[c]}$ satisfying the following conditions:
\begin{itemize}
    \item $F\cap \langle \cC\rangle = \varnothing$,
    \item $F\subseteq \bigsqcup_{\varnothing \neq \Lambda \subseteq [s]} D_{\jJ,\Lambda}$, and
    \item $\cF$ can be extended to a solution $L = (\ell_S : S\in \overline{J}_1\cup\dots \cup \overline{J}_s)$ of the system of Inequalities \eqref{eq: generalIneqs} by setting $\ell_S = 0$ if $S \in \overline{J}_1\cup\dots \cup \overline{J}_s - F$. 
\end{itemize}
\end{notation}

\begin{definition}\label{def: generalGenSet}
For any integer $n \ge m$ and any pair $(\cF,\cC)\in \cP$, define the set
\begin{align*}
%\cG_{\cF,\cC} = 
\cG_{\cF,\cC}(n)&= \left\{ \left[ \Ast_{S\in F} A_S(\ell_S) \right] \ast \left[ \Ast_{S\in\cC} A_S(\ell_S)\right] \; \mid \; \sum_{S\in\cC}\ell_S = n - \sum_{S\in F}\ell_S \text{ and } \ell_S\geq 0 \text{ if } S\in \cC\right\} \\
&=\left\{ M_{\cF} \ast \left[ \Ast_{S\in\cC} A_S(\ell_S)\right] \; \mid \; \sum_{S\in\cC}\ell_S = n - \sum_{S\in F}\ell_S \text{ and } \ell_S\geq 0 \text{ if } S\in \cC\right\},
\end{align*}
where $M_{\cF}\coloneqq \left[ \Ast_{S\in F} A_S(\ell_S) \right] $.
\end{definition}
 
 Some observations are in order. 
 
\begin{remark}\label{obs: usefulFactsAboutGs} \mbox{}
%We make the following observations in the setting of Notation \ref{not: almostMinimalGenSet} and Definition \ref{def: generalGenSet}.
\begin{enumerate}[(i)] 
    \item By definition, the support of the matrix $M_{\cF}$ is $F$. Sometimes we will refer to $F$ also as the support of $\cF$. 
    %???do we? - delete otherwise??? %  write $F = \Supp \cF$. 
    \item Any matrix $B\in \cG_{\cF,\cC} (n)$ satisfies $F \subset \Supp B \subseteq F\sqcup \cC$. 
    
    \item The set $\cP$ is finite as, for fixed $c$ and $s$, there are only finite $s$-tuples of order ideals and, for each $s$-tuple $\jJ$, the system of inequalities (\ref{eq: generalIneqs}) has only finitely many solutions with entries in $\NN_{0}$.
\end{enumerate}
\end{remark}

\begin{cor} 
     \label{cor:smaller gen set}
For $n \ge m$, the ideal $I_n^\vee$ is generated by the $\Sym(n)$-orbits of monomials $\xx^B$ with $B$ in the set
 $\bigcup_{(\cF,\cC)\in \cP} \cG_{\cF,\cC} (n)$.
\end{cor}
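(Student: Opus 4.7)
The plan is to combine Proposition \ref{prop: generalGenSet} with the Restriction Lemma \ref{lem: restrictionLemma}, both of which are already in hand. The first inclusion --- that every $\xx^B$ with $B \in \bigcup_{(\cF,\cC) \in \cP} \cG_{\cF,\cC}(n)$ lies in $I_n^\vee$ --- is essentially a definition chase. By Definition \ref{def: generalGenSet} and Notation \ref{not: almostMinimalGenSet}, the tuple $\cF = (\ell_S : S \in F)$ associated to $B$ extends (by zero on $\cR - F$) to a solution of the system \eqref{eq: generalIneqs} for some $s$-tuple $\jJ$. Hence $B$ is among the matrices appearing in \eqref{eq: generalCaseRedundant}, and membership in $I_n^\vee$ follows from Proposition \ref{prop: generalGenSet}.

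For the reverse inclusion, it suffices to verify that every minimal generator of $I_n^\vee$ lies in $\bigcup_{(\cF,\cC) \in \cP} \cG_{\cF,\cC}(n)$. Let $\xx^B$ be such a minimal generator. The Restriction Lemma \ref{lem: restrictionLemma} produces an $s$-tuple $\jJ$ of proper order ideals of $2^{[c]}$, a subset $F$, and a decomposition
\[
B = \left[\Ast_{S \in F} A_S(\ell_S)\right] \ast \left[\Ast_{S \in \cC} A_S(\ell_S)\right]
\]
with $\cC = D_{\jJ,\varnothing}$ satisfying conditions (i)--(v) of that lemma. I would then set $\cF = (\ell_S : S \in F)$; condition (iv) guarantees $\ell_S \in \NN$ for every $S \in F$, which is the positivity requirement implicit in Notation \ref{not: almostMinimalGenSet}. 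The remaining conditions (ii), (iii), and (v) of the Restriction Lemma translate verbatim into the three bullet points defining $\cP$: that $F \subseteq \bigsqcup_{\varnothing \neq \Lambda \subseteq [s]} D_{\jJ,\Lambda}$, that $F \cap \langle \cC \rangle = \varnothing$, and that the extension of $\cF$ by zero solves \eqref{eq: generalIneqs}. Therefore $(\cF, \cC) \in \cP$, and the remaining column data $(\ell_S)_{S \in \cC}$ satisfies $\sum_{S \in \cC} \ell_S = n - \sum_{S \in F} \ell_S$ because $B$ has exactly $n$ columns; hence $B \in \cG_{\cF,\cC}(n)$.

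I do not anticipate a genuine obstacle: the real combinatorial content --- in particular, the minimization argument that forces $\Supp B \subseteq F \sqcup \cC$ with $F$ drawn from the $D_{\jJ,\Lambda}$'s --- has already been absorbed into the Restriction Lemma, and the corollary is a bookkeeping matching between its conclusion and the definition of $\cP$. The only small sanity check worth flagging is that nothing in the argument requires the resulting set to be \emph{minimal}; that refinement will be addressed separately, presumably in the analogue of \Cref{cor: one orbit minimal gen set} for the general case.
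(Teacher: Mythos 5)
Your proof is correct and takes exactly the paper's route: the paper's own proof of this corollary is the single observation that the set is obtained from Proposition \ref{prop: generalGenSet} by applying the Restriction Lemma \ref{lem: restrictionLemma}, and your write-up just makes that bookkeeping explicit. The one point both your argument and the paper leave unaddressed is a minimal generator with $F=\varnothing$ (all columns supported in $\cC$, as happens e.g.\ for the generator with $\ell_3=n$ in \Cref{ex: two-Orbit yet again}), which is formally excluded by the requirement $F\neq\varnothing$ in \Cref{not: almostMinimalGenSet} --- a defect of the paper's definition of $\cP$ rather than of your reasoning.
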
 

\begin{proof} 
The given set is obtained from the generating set in \Cref{prop: generalGenSet} by applying the Restriction Lemma \ref{lem: restrictionLemma}. 
\end{proof} 

We now describe, for any sufficiently large $n$, a minimal generating set of the Alexander dual of the ideal $I_n$. In Section \ref{sec: counting},   we will study the cardinalities of these generating sets as a function in $n$. 
%Recall that the sets $\cG_{\cF,\cC}(n)$ are defined in \Cref{def: generalGenSet}. right after \Cref{ex: two-Orbit again}

\begin{theorem}
    \label{thm:min gens in general} 
If $n \ge m$ then the Alexander dual $I_n^{\vee}$ of $I_n$ is minimally generated by the $\Sym (n)$-orbits of monomials $\xx^B$ with $B$ in the set 
\begin{equation}\label{eq:minGens}
    \cG(I_n^\vee)=\bigcup_{(\cF,\cC)\in \cP} \big [ \cG_{\cF,\cC}(n) - \bigcup_{\substack{(\cF',\cC'))\in \cP\\ (\cF,\cC)\neq (\cF',\cC')}} \left(I_{\cF',\cC'}(n)- \cG_{\cF',\cC'}(n)\right) \big ], 
\end{equation}
where 
\begin{equation*}
   I_{\cF,\cC}(n)=\{B \in \{0, 1\}^{c \times n} \; \mid \;  \xx^B\in  \langle \xx^A \; \mid \; A\in \cG_{\cF,\cC}(n) \rangle \subset R_n\}.
\end{equation*}   
\end{theorem}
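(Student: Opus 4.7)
The proof has two parts. For the generating property, note that by \Cref{cor:smaller gen set} the $\Sym(n)$-orbits of $\xx^B$ for $B \in \bigcup_{(\cF,\cC) \in \cP} \cG_{\cF,\cC}(n)$ already generate $I_n^\vee$. If $B$ is removed in the definition of $\cG(I_n^\vee)$, then by construction there exist a pair $(\cF',\cC') \neq (\cF,\cC)$ and a matrix $A \in \cG_{\cF',\cC'}(n)$ with $A \neq B$ and $\xx^A \mid \xx^B$ in $R_n$, so $\deg \xx^A < \deg \xx^B$. Iterating along this strictly decreasing degree sequence must terminate at some $\tilde A \in \cG(I_n^\vee)$ with $\xx^{\tilde A} \mid \xx^B$, so the orbit of $\xx^{\tilde A}$ still captures $\xx^B$, and $\cG(I_n^\vee)$ remains a generating set.

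For minimality, I first unpack the set-theoretic expression to obtain a clean characterization: $B \in \cG(I_n^\vee)$ if and only if $B \in \cG_{\cF,\cC}(n)$ for some $(\cF,\cC) \in \cP$ and, for every $(\cF',\cC') \in \cP$ with $B \in I_{\cF',\cC'}(n)$, also $B \in \cG_{\cF',\cC'}(n)$ (the inclusion $\cG_{\cF,\cC}(n) \subseteq I_{\cF,\cC}(n)$ ensures that any witnessing pair $(\cF,\cC)$ automatically satisfies the exclusion). Now assume for contradiction that $B, B' \in \cG(I_n^\vee)$ are distinct and $\xx^{B'}$ divides $\xx^B$ up to symmetry; fix a pair $(\cF',\cC')$ with $B' \in \cG_{\cF',\cC'}(n)$. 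The central technical step is to produce some $\tilde B' \in \cG_{\cF',\cC'}(n)$ satisfying $\xx^{\tilde B'} \mid \xx^B$ in $R_n$, which would place $B$ in $I_{\cF',\cC'}(n)$. I intend to feed the symmetric-divisibility hypothesis into the Divisibility Lemma~\ref{lem: divisibility up to symmetry} to obtain order-ideal inequalities on exponents, then use them to choose multiplicities $(\tilde j_T)_{T \in \cC'}$—summing to the required total $n - \sum_{S \in F'} \ell'_S$—so that the resulting matrix, carrying the common $\cF'$-part inherited from $B'$, embeds column-by-column into $B$. This is essentially a Hall-type matching argument driven by those inequalities.

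Granted $B \in I_{\cF',\cC'}(n)$, the characterization forces $B \in \cG_{\cF',\cC'}(n)$, so $B$ and $B'$ lie inside a common $\cG_{\cF',\cC'}(n)$. To finish, I prove a general-case analogue of \Cref{cor:from dibisibility lemma}(a): distinct elements of a single $\cG_{\cF',\cC'}(n)$ cannot symmetrically divide each other. The argument mirrors the one-orbit case—the shared $\cF'$-part cancels in each Divisibility-Lemma inequality, and the equal totals $\sum_{T \in \cC'} j_T = \sum_{T \in \cC'} j'_T$, combined with the inequalities applied to each principal order ideal $\langle \{T\}\rangle$ for $T \in \cC'$ (which meets $\cC'$ only in $\{T\}$ since $\cC'$ is an antichain), force $j_T = j'_T$ for every $T \in \cC'$. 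Hence $B = B'$, contradicting the assumption. The main obstacle is the construction of $\tilde B'$ above: turning a symmetric-divisibility relation into an ordinary-divisibility relation by an element of a prescribed family of matrices requires carefully redistributing $\cC'$-multiplicities so that the specified column ordering of $\tilde B'$ aligns with that of $B$, while simultaneously respecting the divisibility and summation constraints of $\cG_{\cF',\cC'}(n)$.
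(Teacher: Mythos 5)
Your overall architecture is the same as the paper's: generation by finite descent along proper divisors of removed elements, and minimality by unfolding the exclusion sets to force any two comparable elements of $\cG(I_n^\vee)$ into a common $\cG_{\cF',\cC'}(n)$, where the Divisibility Lemma rules out nontrivial divisibility. Your descent argument and your final step (distinct elements of one $\cG_{\cF',\cC'}(n)$ cannot divide each other up to symmetry, via the principal order ideals $\langle T\rangle$ for $T\in\cC'$ and the antichain/$F'\cap\langle\cC'\rangle=\varnothing$ conditions) are both correct and essentially identical to what the paper does, in fact spelling out details the paper compresses into ``It follows that $\cG(I_n^\vee)$ does not contain redundant matrices.''

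The gap is exactly the step you flag as the ``main obstacle,'' and it remains open in your write-up: you never actually produce $\tilde B'\in\cG_{\cF',\cC'}(n)$ with $\xx^{\tilde B'}\mid\xx^B$ in $R_n$. Moreover, as formulated this step cannot in general be carried out: every element of $\cG_{\cF',\cC'}(n)$ has the \emph{fixed} matrix $M_{\cF'}$ in its first $\sum_{S\in F'}\ell'_S$ columns and its $\cC'$-blocks in a prescribed order, so ordinary columnwise divisibility into the fixed column arrangement of $B$ imposes conditions on the leading columns of $B$ that redistributing the $\cC'$-multiplicities cannot repair; no Hall-type matching will align two incompatible block orderings. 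The correct (and much shorter) way to close the gap is to note that membership in $I_{\cF',\cC'}(n)$ depends only on the multiset of column supports of $B$ --- equivalently, $I_{\cF',\cC'}(n)$ is stable under column permutations. This is precisely the content of the Membership Lemma \ref{lem: membership} (its criterion involves only the counts $j_T$), or can be seen directly by replacing the divisor $\xx^{B'}$ of $\sigma(\xx^B)$ with its conjugate under $\sigma^{-1}$ and noting that the ideal relevant here is the $\Sym(n)$-invariant one generated by the orbits. With that observation, ``$\xx^{B'}$ divides $\xx^B$ up to symmetry'' immediately gives $B\in I_{\cF',\cC'}(n)$, and the rest of your argument goes through.
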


\begin{proof}
The Divisibility \Cref{lem: divisibility up to symmetry} implies that any set $\cG_{\cF,\cC}(n)$ does not contain two distinct matrices $B, B'$ such that $\xx^B$ divides $\xx^{B'}$ up to symmetry. It follows that  $\cG(I_n^\vee)$ does not contain redundant matrices. It remains to show that the orbits of $\xx^B$ with $B \in \cG(I_n^\vee)$ generate $I_n^{\vee}$. 

By \Cref{cor:smaller gen set}, we know that the orbits of $\xx^B$ with $B \in\bigcup_{(\cF,\cC)\in \cP} \cG_{\cF,\cC} (n)$ generate $I_n^{\vee}$. Hence, it suffices to show, for any $B \in \cG_{\cF,\cC}(n)\cap \bigcup_{\substack{(\cF',\cC'))\in \cP\\ (\cF,\cC)\neq (\cF',\cC')}}   \left(I_{\cF',\cC'}(n)- \cG_{\cF',\cC'}(n)\right)$, there is some matrix 
$B_1 \in \cG(I_n^\vee)$ such that $\xx^{B_1}$ properly divides $\xx^B$ up to symmetry, and so in particular $B_1 \neq B$. Indeed, by choice of $B$, there is some $(\cF_1, \cC_1) \neq (\cF, \cC)$ in  $\cP$ such that $B_1 \in \cG_{\cF_1,\cC_1}(n)$ and  $\xx^{B_1}$ properly divides $\xx^B$ up to symmetry. 
If $B_1$ is not contained in any set $I_{\cF_2,\cC_2}(n)- \cG_{\cF_2,\cC_2}(n)$ with $(\cF_2, \cC_2) \neq (\cF_1, \cC_1)$, we get $B_1 \in  \cG(I_n^\vee)$ and are done. Otherwise, there is a matrix $B_2$ in  $ \cG_{\cF_2,\cC_2}(n)$ such  $\xx^{B_2}$ properly divides $\xx^{B_1}$ up to symmetry. Recall that $\xx^{B_1}$ properly divides $\xx^{B}$ up to symmetry. Since any such chain of proper divisors of $\xx^B$ is finite, eventually we get a matrix $B' \in \cG(I_n^\vee)$ such that $\xx^{B'}$ properly divides $\xx^{B}$ up to symmetry, which concludes the argument. 
\end{proof}

\begin{example}
      \label{ex: two-Orbit yet again} 
Continuing Example \ref{ex: two-Orbit}, let $c = 3$ and consider,  for any $n \ge 3$, the ideal $I_n = \langle \Sym(n)\cdot \xx^{A_1}, \; \Sym(n)\cdot \xx^{A_2}\rangle$, where $$
A_1 = \begin{bmatrix} 1 & 1& 1 \\ 1 & 1& 0 \\ 0 & 0 & 1
\end{bmatrix}, \qquad 
A_2 = \begin{bmatrix} 1 & 0 & 0 \\ 1 & 1 & 1 \\ 0 & 1 & 1
\end{bmatrix}. 
$$
In $2^{[3]}$, there are 20 order ideals. It turns out that only five pairs of order ideals lead to minimal generators. 
%\ayah{Maybe the rest of this should be moved elsewhere, but I don't really know where} ???Decide location later??? 
%In total, there are six pairs of order ideals and seven sets of inequalities which give all the generators of $I_n^\vee$; 
We list them and their corresponding conditions on the matrices $[\Ast_{S \in F \sqcup \cC} A_S (\ell_S)] \in \cG_{\cF, \cC}$ mentioning only the numbers $\ell_S$ that can be not zero. 
\begin{center}
\begin{tabular}{|c|c|c|} \hline
$\jJ = (J_1, J_2)$ & Conditions & \# minimal generators \\ \hline \hline
$(\langle 12\rangle,\langle23\rangle)$ & $\ell_\varnothing = 1, \; \ell_2 = n-1 \ge 3,$ & 1 \\ \hline
%& $\ell_{1} = \ell_{3} = \ell_{12} = \ell_{13} = \ell_{23} = \ell_{123} = 0$ & \\ \hline
$( \langle12\rangle,\langle12\rangle)$ & $\ell_1 \geq 3,  \; \ell_2 \geq 0, $ & $n-2$ \\
%& $\ell_3 = \ell_{12} = \ell_{13} = \ell_{23} = \ell_{123} = 0$ & 
& $\ell_1 + \ell_2  = n$ &\\ \hline
$(\langle12\rangle,\langle12,23\rangle)$ & $\ell_\varnothing = \ell_1 = 1,  \; \ell_2 \geq 1,  \; \ell_{13}\geq 0, $ & $n-2$ \\
%& $ \ell_3 = \ell_{12} = \ell_{23} = \ell_{123} = 0$ & 
& $ \ell_2 + \ell_{1 3} = n-2 $ & \\ \hline
$(\langle13\rangle,\langle23\rangle)$ & $\ell_3 = n  \geq 3,$ & 1 \\ \hline
%& $\ell_\varnothing = \ell_1 = \ell_2 = \ell_{12} = \ell_{13} = \ell_{23} = \ell_{123} = 0$ & \\ \hline
$(\langle13\rangle,\langle23\rangle$ & $\ell_3 \geq 2,  \; \ell_1 = 1,  \; \ell_{12} \geq 0$ & $n-2$ \\
%& $\ell_\varnothing = \ell_2 = \ell_{13} = \ell_{23} = \ell_{123} = 0$ & 
& $ \ell_3 + \ell_{1 2} = n-1 $ & \\ \hline
$(\langle12,13\rangle,\langle12,23\rangle)$ & $\ell_\varnothing = 2,  \; \ell_{12} \geq 0,  \; \ell_{13} \geq 0,  \; \ell_{23}\geq 0$ & $\binom{n}{2} = \frac{n(n-1)}{2}$ \\
% & $\ell_1 = \ell_2 = \ell_{3} = \ell_{123} = 0
& $ \ell_{1 2} + \ell_{1 3} + \ell_{2 3}  = n-2 $ & \\ \hline
\end{tabular}
\end{center}
We note that the pair of order ideals $(\langle13\rangle,\langle23\rangle)$ gives two different classes of minimal generators coming from different solutions of the system of inequalities \eqref{eq: generalIneqs}.
In total, this gives that the number of minimal orbit generators of the Alexander dual is $\frac{n^2}{2} + \frac{5}{2}n - 4$.
\end{example}

The polynomial growth of the number of orbit generators is true in general as we show in Section \ref{sec: counting}.  The argument  uses a result from convex geometry that we establish in the following section.  

%%%%%%%%%%%%%%%%%%%%%%%%%%%%%%%%%%%%%%%%%%%%%%%%%%%%%%%%%%%%%%%%%%%%%%%%%%
\section{Cone Decompositions} \label{sec: coneDecomp}
%%%%%%%%%%%%%%%%%%%%%%%%%%%%%%%%%%%%%%%%%%%%%%%%%%%%%%%%%%%%%%%%%%%%%%%%%%

%Adjust intro to focus on cone decomposition ??? 

We consider a particular class of polyhedra. Their characteristic feature is that the bounding hyperplanes are defined by conditions on sums of variables. Our main result is that any such polyhedron can be decomposed as union of pointed rational cones. Thus, Ehrhart theory can be applied to study the number of integer points of the polyhedron. 

%The goal of this section is to show that the number of minimal generators of the Alexander duals of $\Sym$-invariant chains of Stanley-Reisner ideals grows polynomially. In order to achieve this goal, we convert the inequalities appearing in the generating sets from Section \ref{sec: generalCase} to a geometric setting. Once this setting is established, we will show that the polynomial determining the growth of the number of generators of the Alexander dual is a sum of Ehrhart polynomials.
%% In order to determine the size of each $M\cG_\cC(n)$, we will convert the inequalities appearing in the definition of $M\cG_\cC(n)$ to a geometric setting and show that $\#M\cG_\cC(n)$ is a sum of Ehrhart polynomials. First, we consider a more general situation.

\begin{notation}\label{not: polyhedron}
Given any $k\in \NN$,  fix $\aa=(a_T ~\mid~  T \subseteq [k]) \in \ZZ^{2^{[k]}-\{\varnothing\}}$ and $\bb=(b_T ~\mid~: T\in U)\in \ZZ^{2^{|U|}}$ for some fixed $U\subseteq 2^{[k]}$. Define a polyhedron $P_{\aa,\bb}$ as the set 
\begin{equation}\label{eq: polyhedron}
P_{\aa,\bb} := \left\{(x_1,\ldots,x_k) \in \RR^k ~\mid~ \sum_{i\in T}x_i \geq a_T \text{ for all } \varnothing \neq T \subseteq [k], \sum_{i\in T}x_i\leq b_T\text{ for all } T\in U\right\}.
\end{equation}
If $U=\emptyset$, we write $P_{\aa}$ for short.
 %In the simpler case ??? Omit here, as it is included in the general case. If needed, define $P_{\aa}$ in the proof of \Cref{prop: disjoint cone decomposition} ???  where $U = \varnothing$, set
%\begin{equation}
%P_{\aa} := \left\{(x_1,\ldots,x_s) \in \RR^s : \sum_{i\in T}x_i \geq a_T \text{ for all } \varnothing \neq T \subseteq [s]\right\}.
%\end{equation}
\end{notation}

The goal of this section is to determine the cardinality of the set of integer points of $P_{\aa,\bb}$.
\begin{theorem}\label{prop: disjoint cone decomposition}
Adopt Notation \ref{not: polyhedron}.
Either $P_{\aa,\bb}$ is empty or there exist disjoint, pointed rational cones $C_1,\ldots,C_t\subset \RR^k$ with integral apices such that
$$P_{\aa,\bb} \cap \ZZ^k = \bigcup_{i=1}^t (C_i \cap \ZZ^k).$$
Moreover, each cone $C_i$ is of the form 
$$C_i = \left \{(x_1,\ldots,x_k)\in \RR^k~\mid~ x_i = c_i \text{ for }i\in J, \; x_i \geq c_i \text{ for }i\in[k]- J\right\}$$
for some $J\subseteq[k]$ and $(c_1,\ldots,c_k)\in\NN^k$. We will call such a cone an \emph{orthant}.
\end{theorem}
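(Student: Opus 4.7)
The plan is to prove the decomposition by induction on $k$. The base case $k=1$ is straightforward: $P_{\aa,\bb}\cap\ZZ$ is either empty, a finite set of integers, or the union of finitely many integers with an infinite arithmetic ray, each piece being a $0$- or $1$-dimensional orthant with integer apex. For the inductive step, assuming the statement in $\RR^{k-1}$ and that $P\cap\ZZ^k\neq\varnothing$ (otherwise take $t=0$), I set $m_1:=\min\{x_1\mid x\in P\cap\ZZ^k\}$, which is a well-defined integer since the inequality $x_1\geq a_{\{1\}}$ is among the defining conditions. I then split into two cases according to whether $x_1$ is bounded above on $P$.

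In the bounded case, let $M_1:=\max\{x_1\mid x\in P\cap\ZZ^k\}\in\ZZ$ and partition
\[
P\cap\ZZ^k=\bigsqcup_{c=m_1}^{M_1}\{x\in P\cap\ZZ^k\mid x_1=c\}.
\]
Substituting $x_1=c$, each constraint $\sum_{i\in T}x_i\geq a_T$ with $T\ni 1$, $|T|\geq 2$ becomes $\sum_{i\in T-\{1\}}x_i\geq a_T-c$, which combines with the $T\not\ni 1$ constraints to give lower bounds $\max(a_{T'},\,a_{T'\cup\{1\}}-c)$ on $\sum_{i\in T'}x_i$ for $\varnothing\neq T'\subseteq\{2,\dots,k\}$; upper bounds transform analogously. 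Each slice is therefore a polyhedron of the required form in $\RR^{k-1}$, so the induction hypothesis decomposes it into integer-apex orthants, which lift to orthants in $\RR^k$ by appending $x_1=c$ (placing $1\in J$). Disjointness across slices is automatic from distinct values of $x_1$.

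In the unbounded case, I first argue that no $T\in U$ can contain $1$, because otherwise $\sum_{i\in T}x_i\leq b_T$ together with the lower bounds $x_i\geq a_{\{i\}}$ would force $x_1\leq b_T-\sum_{i\in T-\{1\}}a_{\{i\}}$. Setting $M_1:=\max\bigl(a_{\{1\}},\ \max_{T\ni 1,\,|T|\geq 2}(a_T-a_{T-\{1\}})\bigr)$, the estimate $M_1+a_{T-\{1\}}\geq a_T$ guarantees that whenever $x_1\geq M_1$ and the constraints on $(x_2,\dots,x_k)$ are satisfied, the constraints $\sum_{i\in T}x_i\geq a_T$ with $1\in T$ are automatically implied. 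Consequently $\{x\in P\mid x_1\geq M_1\}=\{x_1\geq M_1\}\times P'$ for a $(k-1)$-dimensional polyhedron $P'$ of the required form. I then partition
\[
P\cap\ZZ^k=\bigsqcup_{c=m_1}^{M_1-1}\{x\in P\cap\ZZ^k\mid x_1=c\}\,\sqcup\,\{x\in P\cap\ZZ^k\mid x_1\geq M_1\},
\]
handle the finite slices as in the bounded case, and lift the inductive orthant decomposition of $P'$ by appending the free coordinate $x_1\geq M_1$ (now with $1\notin J$ and apex first coordinate $M_1$).

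The principal subtlety will be identifying the threshold $M_1$ in the unbounded case so that the $x_1$-coordinate truly decouples from the remaining constraints beyond it; this requires exploiting that each $T\ni 1$ has $T-\{1\}$ itself indexing one of the defining inequalities. Once this is in place, integrality of all apices is automatic because $m_1$, $M_1$, the slice values $c$, and every inductively chosen apex coordinate lie in $\ZZ$ by construction.
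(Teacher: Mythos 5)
Your proof is correct and follows essentially the same strategy as the paper's: induct on $k$, slice the polyhedron along the integer hyperplanes of one coordinate up to a threshold, and show that the region beyond the threshold is a product of a ray with a lower-dimensional polyhedron of the same type. The only notable differences are that you fold the $U\neq\varnothing$ case directly into the induction (the paper first treats $U=\varnothing$ and then reduces to it by fixing all coordinates appearing in $U$), and that your threshold $M_1=\max\bigl(a_{\{1\}},\max_{T\ni 1}(a_T-a_{T-\{1\}})\bigr)$ is chosen more carefully than the paper's $\max\{a_T\mid 1\in T\}$, which decouples the sliced coordinate only when the remaining lower bounds $a_{T-\{1\}}$ are non-negative (as they are in the paper's application, but not in the generality of the statement).
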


Before proving this theorem, we demonstrate it with an example in the simplest case when $U = \varnothing$.

\begin{example}\label{ex: coneDecomp}
Let $k=3$ and $U = \varnothing$. Fix the vector $\aa\in \ZZ^{2^{[3]}}$ such that $a_{\{1\}} =  a_{\{2\}}= a_{\{3\}} = 1$,  $a_{\{1,2\}} = 3$, and $a_T = 0$ for all other $T\subseteq [3]$. Figure~\ref{fig:one_orbit_polyhedron} shows both $P_\aa$ and its cone decomposition.
% Consider the ideal \(I=\langle A\rangle\) generated by 
% \[
% A=\begin{bmatrix}
% 1&0&0\\
% 1&0&0\\
% 1&1&1
% \end{bmatrix}
% \]
% together with the antichain \(\cC=\{1,2,3\}\subset2^{[3]}\). 
% When \(n=4\), these produce the following polyhedron:
% \[
% \widetilde{M\cG}(4)=\left\{ (x_1,x_2,x_3)\in \RR^3\::\: x_1,x_2,x_3\geq 1,\; x_1+x_2\geq 3\right\}
% \]
% shown to the left in Figure~\ref{fig:one_orbit_polyhedron}. 
% To the right in the figure is the decomposition of this polyhedron into orthants.
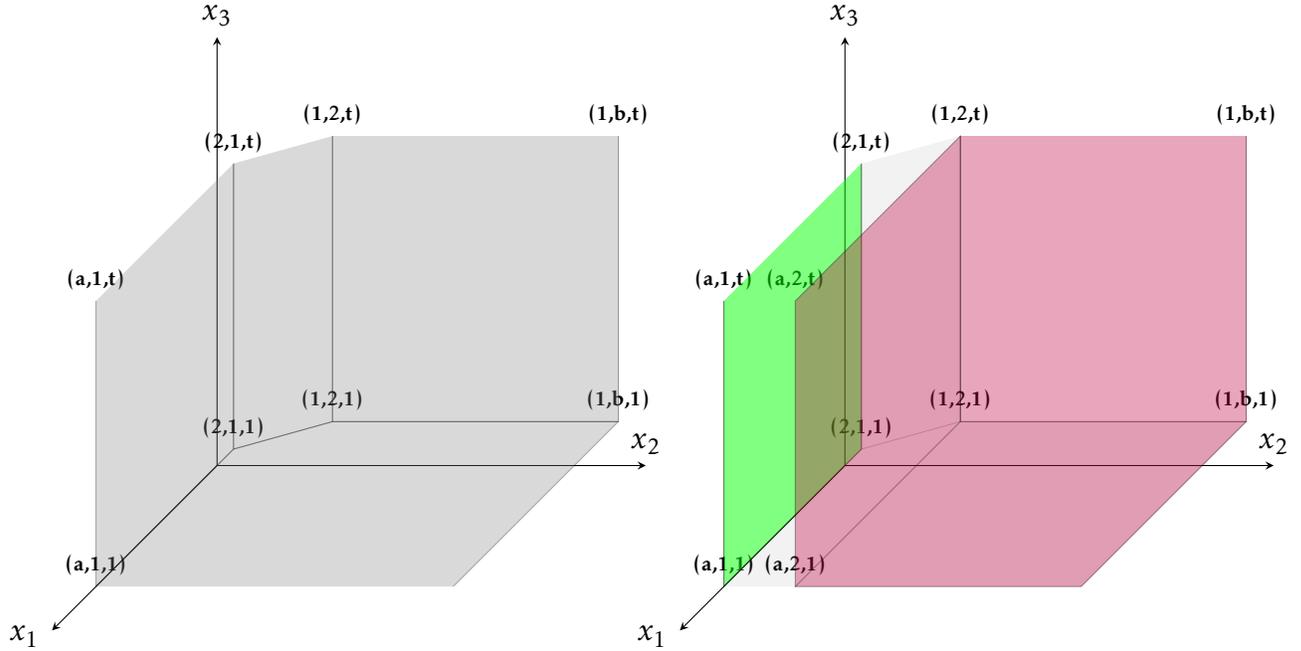
\begin{figure}[h]
    \centering

 \begin{tikzpicture}
 
 \begin{scope}[scale=0.95, xshift=0, yshift=0]
 %axes
\draw[thin, -stealth] (0,0,0)--(0,0,6); 
\draw[thin, -stealth] (0,0,0)--(0,6,0);
\draw[thin, -stealth] (0,0,0)--(6,0,0);

%axis labels
\coordinate[label=$x_1$] (x1) at (0,0,7);
\coordinate[label=$x_2$] (x2) at (6,0,0);
\coordinate[label=$x_3$] (x3) at (0,6,0);

%labeled points
 \coordinate[label={\bf\tiny(2,1,1)}] (A) at (1,1,2);
 \coordinate[label={\bf\tiny(1,2,1)}] (B) at (2,1,1);
 \coordinate[label={\bf\tiny(2,1,t)}] (C) at (1,5,2);
 \coordinate[label={\bf\tiny(1,2,t)}] (D) at (2,5,1); 
 \coordinate[label={\bf\tiny(a,1,1)}] (E) at (1,1,7); 
 \coordinate[label={\bf\tiny(a,1,t)}] (F) at (1,5,7);
 \coordinate[label={\bf\tiny(1,b,1)}] (G) at (6,1,1); 
 \coordinate[label={\bf\tiny(1,b,t)}] (H) at (6,5,1);
 \coordinate[] (I) at (6,1,7); %the "outside' corner

%filling areas with colors
 \draw[fill=gray,opacity=0.3] (C)--(A)--(B)--(D);
 \draw[fill=gray,opacity=0.3] (F)--(E)--(A)--(C);
 \draw[fill=gray,opacity=0.3] (D)--(B)--(G)--(H);
 \draw[fill=gray,opacity=0.3] (E)--(A)--(B)--(G)--(I);
\end{scope}

\begin{scope}[scale=0.95, xshift=250, yshift=0]
 %axes
\draw[thin, -stealth] (0,0,0)--(0,0,6); 
\draw[thin, -stealth] (0,0,0)--(0,6,0);
\draw[thin, -stealth] (0,0,0)--(6,0,0);

%axis labels
\coordinate[label=$x_1$] (x1) at (0,0,7);
\coordinate[label=$x_2$] (x2) at (6,0,0);
\coordinate[label=$x_3$] (x3) at (0,6,0);

%labeled points
 \coordinate[label={\bf\tiny(2,1,1)}] (A) at (1,1,2);
 \coordinate[label={\bf\tiny(1,2,1)}] (B) at (2,1,1);
 \coordinate[label={\bf\tiny(2,1,t)}] (C) at (1,5,2);
 \coordinate[label={\bf\tiny(1,2,t)}] (D) at (2,5,1); 
 \coordinate[label={\bf\tiny(a,1,1)}] (E) at (1,1,7); 
 \coordinate[label={\bf\tiny(a,1,t)}] (F) at (1,5,7);
 \coordinate[label={\bf\tiny(1,b,1)}] (G) at (6,1,1); 
 \coordinate[label={\bf\tiny(1,b,t)}] (H) at (6,5,1);
 \coordinate[] (I) at (6,1,7); %the "outside' corner
 \coordinate[label={\bf\tiny(a,2,1)}] (J) at (2,1,7);
 \coordinate[label={\bf\tiny(a,2,t)}] (K) at (2,5,7); 

%filling areas with colors
 \draw[fill=gray,opacity=0.1] (C)--(A)--(B)--(D);
 \draw[fill=green,opacity=0.5] (F)--(E)--(A)--(C);
 \draw[fill=purple,opacity=0.35] (D)--(B)--(G)--(H);
 \draw[fill=gray,opacity=0.1] (E)--(A)--(B)--(G)--(I);
 \draw[fill=purple, opacity=0.35] (B)--(D)--(K)--(J);
 \draw[fill=purple, opacity=0.35](B)--(J)--(I)--(G);
 \end{scope}
\end{tikzpicture}
    \caption{
    The polyhedron $P_\aa$ from Example \ref{ex: coneDecomp} and its decomposition into orthants.
    % On the left is the polyhedron $\widetilde{M\cG}_\cC$  generated by the ideal \(I\) and antichain \(\cC\) above. On the right, the green and purple areas show its decomposition into orthants.}
    }
    \label{fig:one_orbit_polyhedron}
\end{figure}
\end{example}

\begin{proof}[Proof of \Cref{prop: disjoint cone decomposition}]
We assume that $a_{\varnothing}\leq 0$, since otherwise $P_{\aa,\bb}=\emptyset$. In the former case however, does not impose any further restrictions on $P_{\aa,\bb}$ which is why we will not mention it further in our considerations and just assume that $\aa\in \ZZ^{2^{[k]}-\{\varnothing\}}$.

First consider the case that $U=\varnothing$. 
We prove the statement by induction on $k$. When $k=1$, $P_\aa$ has the form 
$$P_\aa = \{x\in\RR~ \mid~ x\geq a\}$$
for some $a\in \ZZ$. This is itself a cone, so the statement holds. %Similarly for $s=2$, $P_\aa$ has the form 
%$$P_\aa = \{(x_1,x_2)\in\RR^2 : x_1 \geq a_{\{1\}}, x_2 \geq a_{\{2\}}, x_1 + x_2 \geq a_{\{1,2\}}\},$$ which is itself an orthant with apex $(a_{\{1\}},a_{\{2\}})$. 
Now fix $k\geq 2$ and $\aa\in\ZZ^{2^{[k]}-\{\varnothing\}}$. We decompose the integer points in $P_\aa$ by their $x_k$ coordinate. First, set $\widetilde{a}_{\{k\}}:=\max\{a_T ~\mid~ T\subseteq [k], \; k\in T\}$. Define $\widetilde{P}_\aa$ by
$$\widetilde{P}_\aa\coloneqq\left\{(x_1,\ldots,x_k)\in\RR^k~\mid~ x_k \geq \widetilde{a}_{\{k\}}, \sum_{i\in T} x_i \geq a_T \text{ for all } \varnothing \neq T \subseteq [k]\right\}.$$
In other words, $\widetilde{P}_\aa$ is the part of $P_\aa$ lying on or above the plane  $\{x_k = \widetilde{a}_{\{k\}}\}$. Now we consider the points in $P_\aa$ that do not lie in $\widetilde{P}_\aa$. By construction, any such point $(x_1,\ldots,x_k)$ must satisfy $a_{\{k\}}\leq x_k <\widetilde{a}_{\{k\}}$. If such a point is also an integer point, it must lie on some plane $\{x_k = i\}$, for some integer $i$ with $a_{\{k\}}\leq i \leq \widetilde{a}_{\{k\}}-1$. Hence, we define for each such $i$ the polyhedron $P_\aa^{(i)}$ as follows:
$$P_\aa^{(i)} \coloneqq \left\{ (x_1,\ldots,x_k)\in\RR^k~\mid~ x_k = i, \sum_{j\in T}x_j \geq \max(a_T,a_{T\cup\{i\}}-i)\text{ for all }\varnothing \neq T \subseteq [k-1]\right\}.$$
Note that $P_\aa^{(i)}$ is simply the intersection of $P_\aa$ with the plane $\{x_k=i\}$, since the inequalities from $P_\aa$ of the form $x_k + \sum_{j\in T}x_j \geq a_{T\cup\{k\}}$ become $\sum_{j\in T}x_j \geq a_{T\cup\{k\}}-i$ for each $\varnothing \neq T \subseteq [k-1]$. It follows that 
$$P_\aa\cap \ZZ^k = (\widetilde{P}_\aa \cap \ZZ^k) \cup \left( \bigcup_{i=a_{\{k\}}}^{\widetilde{a}_{\{k\}-1}} (P_\aa^{(i)}\cap \ZZ^k)\right)$$
and that this union is disjoint. Moreover, we can write
$$\widetilde{P}_\aa = \left\{x\in\ZZ^{k-1}~\mid~\sum_{i\in T}x_i \geq a_T\text{ for all }\varnothing \neq T \subseteq [k-1]\right\} \times [\widetilde{a}_{\{k\}},\infty)$$
and 
$$P_\aa^{(i)} = \left\{x\in \ZZ^{k-1}~\mid~ \sum_{j\in T}x_j \geq \max\{a_T, a_{T\cup\{k\}}-i\} \text{ for all } \varnothing \neq T \subseteq[k-1]\right\} \times\{i\}.$$
 Call the first factor in each Cartesian product $\widetilde{P}_{\aa,1}$ and $P_{\aa,1}^{(i)}$, respectively. By induction, we may decompose the integer points of $\widetilde{P}_{\aa,1}$ and $P_{\aa,1}^{(i)}$ into a disjoint union of integer points of pointed rational orthants. The claim follows.
 
 Now assume that $U\neq \varnothing$. Let $U=\{T_1,\ldots,T_r\}$. The points in $P_{\aa,\bb}$ satisfy 
\begin{equation}\label{eq: system of equations}
    a_{T_i}\leq \sum_{j\in T_i}x_j\leq b_{T_i} \qquad \text{for all } 1\leq i\leq r.
\end{equation}
If this system of inequalities as a system of inequalities in $\RR^{T_1\cup\cdots\cup T_r}$ does not have integral solutions, then $P_{\aa,\bb}=\varnothing$. Otherwise, we are interested in  integral solutions of \eqref{eq: system of equations} with $x_j\geq a_j$ for all $j\in T_1\cup\cdots \cup T_r$. Since componentwise those must be bounded from above as well, it follows that there are only finitely many such (or none). Denote the set of those solutions by $\cL$ and assume $\cL\neq \varnothing$ since otherwise $P_{\aa,\bb}=\varnothing$. Given $w\in \cL$ we set
$$
P_{\aa,\bb}(w)\coloneqq=\{x\in P_{\aa,\bb}~\mid~x_j=w_j\text{ for } j\in T_1\cup\cdots\cup T_r\}.
$$
We then have
$$
P_{\aa,\bb}\cap \ZZ^k=\left(\bigcup_{w\in\cL}P_{\aa,\bb}(w)\right) \cap\ZZ^k.
$$
Let $w\in\cL$ and without loss of generality assume $T_1\cup\cdots\cup T_r=\{m,m+1,\ldots,k\}$ for some $m\geq 1$. If $m=1$, then $P_{\aa,\bb}\cap \ZZ^k$ only consists of the set $\cL$. In particular, it is finite and there is nothing to show. Assume $m\geq 2$. In this case, by the same reasoning as above for $P_\aa^{(i)}$, we have
$$
P_{\aa,\bb}(w)=\left\{x=(x_1,\ldots,x_{m-1})\times w\in \RR^{m-1}\times \{w\}~\mid~\sum_{j\in T}x_j\geq \widetilde{a}_T \mbox{ for all } \varnothing\neq T\subseteq [m-1]\right\},
$$
where $\widetilde{a}_T=\max\{a_{T\cup S}-\sum_{i\in S}w_i~\mid~\varnothing\subseteq S\subseteq\{m,m+1,\ldots,k\}\}$. Finally, it follows from the case $U=\varnothing$ that the polyhedron $P_{\aa,\bb}(w)$ can be decomposed into a union of disjoint orthants. This finishes the proof.
\end{proof}

\begin{cor}
    \label{cor:counting}
If $P_{\aa,\bb}$ is not empty then the cardinality of the set
$$
P_{\aa,\bb} \cap \ZZ^k \cap \{(x_1,\ldots,x_k)\in \RR^k \; \mid \; x_1 + \cdots + x_k = n\} 
$$
is given by a polynomial in $n$ whose degree is at most $k-1$ whenever $n$ is a sufficiently large integer. 
\end{cor}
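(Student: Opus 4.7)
\medskip

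\noindent\textbf{Proof plan.} The plan is to use the cone decomposition from \Cref{prop: disjoint cone decomposition} to reduce the counting problem to a finite sum of lattice-point counts in hyperplane slices of orthants, each of which is a polynomial in $n$ by a direct binomial coefficient computation (no Ehrhart machinery is needed).

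First I would invoke \Cref{prop: disjoint cone decomposition} to write
\[
P_{\aa,\bb} \cap \ZZ^k = \bigsqcup_{i=1}^{t} (C_i \cap \ZZ^k),
\]
where each $C_i$ is an orthant of the form
\[
C_i = \{(x_1,\ldots,x_k)\in\RR^k \; \mid \; x_j = c_j^{(i)} \text{ for } j\in J_i, \; x_j \geq c_j^{(i)} \text{ for } j\in [k]-J_i\}
\]
for some $J_i \subseteq [k]$ and integer apex $(c_1^{(i)},\ldots,c_k^{(i)}) \in \NN^k$. Let $s_i := \sum_{j=1}^k c_j^{(i)}$ and $r_i := k - |J_i|$.

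Next, for the slice of $C_i$ with the affine hyperplane $H_n := \{x_1+\cdots+x_k = n\}$, the substitution $y_j = x_j - c_j^{(i)}$ for $j \notin J_i$ translates the count of $C_i \cap \ZZ^k \cap H_n$ into the number of tuples $(y_j)_{j\notin J_i} \in \NN_0^{r_i}$ satisfying $\sum_{j\notin J_i} y_j = n - s_i$. This count equals
\[
\binom{n - s_i + r_i - 1}{r_i - 1}
\]
whenever $n \geq s_i$ and $r_i \geq 1$, and is zero otherwise (with the convention that if $r_i = 0$ the slice consists of the single point when $n = s_i$ and is empty otherwise). In particular, for each $i$ with $r_i \geq 1$, the function $n \mapsto |C_i \cap \ZZ^k \cap H_n|$ agrees with the polynomial $\binom{n-s_i+r_i-1}{r_i-1}$ in $n$ of degree $r_i - 1 \leq k - 1$, as soon as $n \geq s_i$; and for each $i$ with $r_i = 0$ the count is eventually zero.

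Finally, since the decomposition is disjoint and $t$ is finite, for $n \geq \max_i s_i$ the cardinality in question equals the finite sum
\[
\sum_{i \; : \; r_i \geq 1} \binom{n - s_i + r_i - 1}{r_i - 1},
\]
which is a polynomial in $n$ of degree at most $k-1$. The only mild issue to address is the case $P_{\aa,\bb} = \varnothing$, handled by hypothesis, and the degenerate orthants with $r_i = 0$, which contribute zero for large $n$; neither is a serious obstacle. The hardest step is really just keeping the bookkeeping clean — the substantive work was already done in \Cref{prop: disjoint cone decomposition}. \qed
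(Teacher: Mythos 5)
Your proposal is correct and follows essentially the same route as the paper: decompose $P_{\aa,\bb}\cap\ZZ^k$ into disjoint orthants via \Cref{prop: disjoint cone decomposition}, count lattice points in each hyperplane slice by stars and bars to get a binomial coefficient of degree (number of free coordinates) $-1\le k-1$, and sum over the finitely many orthants. Your explicit treatment of the degenerate orthants with $r_i=0$ is a small but welcome addition that the paper leaves implicit.
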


\begin{proof}
To show the statement it is enough to show it for each orthant in the decomposition of \Cref{prop: disjoint cone decomposition}. For each orthant $C$, the integer points in the intersection of $C$ with the hyperplane $\{(x_1,\ldots,x_k)\in \RR^k \; \mid \; x_1 + \cdots + x_k = n\}$ are (up to permuting indices) of the form:
\begin{equation}\label{eq: count}
\{(x_1,\ldots,x_m)\in\mathbb{Z}^m~\mid ~ x_i\geq c_i\text{ for } 1\leq i\leq m,\;x_1+\cdots +x_m=n-a\}\times \{w\}
\end{equation}
for some $w\in \ZZ^{k-m}$ and integers $a$ and $c_i$ for $1\leq i\leq m$. The cardinality of the set in \eqref{eq: count} is given by $\binom{n-a+m-\sum_{i=1}^mc_i}{m-1}$ and for $n\geq a+\sum_{i=1}^mc_i-m$, this is a polynomial in $n$ of degree $m-1$. Since $m\leq k$ and the lattice points in $P_{\aa,\bb}$ are given as the disjoint union of the lattice points in each orthant of the decomposition, the claim follows.
\end{proof}

We note that after shifting the apex of each orthant of the cone decomposition of $P_{\aa,\bb}$ to the origin, the lattice points counted in each orthant can be seen as the lattice points of dilations of simplices whose vertices are the unit vectors.
 
%%%%%%%%%%%%%%%%%%%%%%%%%%%%%%%%%%%%%%%%%%%%%%%%%%%%%%%%%%%%%%%%%%%%%%%%%%
\section{Counting Minimal Generators} \label{sec: counting}
The goal of this section is to study the number of minimal generators of Alexander duals to any $\Sym$-invariant chain  $(I_n)_{n \in \NN}$ of squarefree monomial ideals. We show that the number of $\Sym(n)$-orbits of monomials that minimally generate  $I_n^{\vee}$ grows eventually as a polynomial in $n$.
%To accomplish this task, it is crucial to understand what prevents a generator $\xx^B$ with $B$ in some set $\cG_{\cF,\cC}(n)$ from being minimal. 

We continue to use the notation from Section \ref{sec: generalCase}. Our starting point is \Cref{thm:min gens in general}, which provides a description of the minimal generating set via the set $\cG (I_n^{\vee})$. In the following, we use  
\[
\cP^{\langle 2\rangle}\coloneqq\cP^2- \{((\cF,\cC),(\cF,\cC))~\mid~(\cF,\cC)\in \cP\}
\]
to denote the set of pairs of distinct elements of $\cP$. The next result, though technical, is one of the key steps to achieve our goal.%Furthermore, we write $\#S$ for the cardinality of a finite set $S$. 

\begin{prop}\label{lem:InclusionExclusion} If $n \ge m$, one has that
\begin{align*}
    \#\cG(I_n^\vee)=   \sum_{\varnothing \neq \cA \subseteq \cP} (-1)^{\# \cA + 1} \  \#\bigcap_{(\cF,\cC)\in \cA}\cG_{\cF,\cC}(n) 
    %\#\cG(I_n^\vee)=\sum_{i=1}^{\#\cP}(-1)^{i+1}\sum_{\cA\in \binom{\cP}{i}} \#\bigcap_{(\cF,\cC)\in \cA}\cG_{\cF,\cC}(n)\\
    %
    + %\sum_{i=1}^{\#\cP^{\langle 2\rangle}}(-1)^i 
    \sum_{\varnothing \neq \cA \subseteq \cP^{\langle 2\rangle}} \sum_{C\subseteq \cA}(-1)^{\# \cA + \#C + 1} \ \# \cG_{\cA, C}(n), 
    \end{align*}
where
\[
\cG_{\cA, C}(n) = 
%\left(\bigcap_{((\cF,\cC),(\cF',\cC'))\in \cA}\cG_{\cF,\cC}(n)\cap\bigcap_{((\cF,\cC),(\cF',\cC'))\in C}\cG_{\cF',\cC'}(n)\cap \bigcap_{((\cF,\cC),(\cF',\cC'))\in \cA- C}I_{\cF',\cC'}(n)\right). 
\bigcap_{((\cF,\cC),(\cF',\cC'))\in C}(\cG_{\cF,\cC}(n)\cap \cG_{\cF',\cC'}(n))\cap \bigcap_{((\cF,\cC),(\cF',\cC'))\in \cA- C}(\cG_{\cF,\cC}(n)\cap (I_{\cF',\cC'}(n))
\]
\end{prop}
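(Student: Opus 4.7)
Writing $p = (\cF,\cC) \in \cP$ for brevity and setting $Z_p(n) := I_p(n) - \cG_p(n)$, my starting point is the reformulation
\[
\cG(I_n^\vee) \;=\; \Bigl(\bigcup_{p \in \cP} \cG_p(n)\Bigr) \;-\; \Bigl(\bigcup_{q \in \cP} Z_q(n)\Bigr).
\]
To see this, note that $B$ lies in $\cG(I_n^\vee)$ precisely when there exists some $p$ with $B \in \cG_p(n)$ and $B \notin Z_q(n)$ for every $q \neq p$; since $\cG_p(n) \subseteq I_p(n)$ forces $\cG_p(n) \cap Z_p(n) = \varnothing$, the constraint on $q$ may be extended at no cost to all $q \in \cP$, giving the displayed equality. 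This reorganization of the expression from \Cref{thm:min gens in general} is the key step that converts a complicated union of differences into a single difference of unions.

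Using $\#(A - B) = \#A - \#(A \cap B)$ together with $\cG_p(n) \cap Z_p(n) = \varnothing$, I obtain
\[
\#\cG(I_n^\vee) \;=\; \#\bigcup_{p \in \cP} \cG_p(n) \;-\; \#\bigcup_{(p,q) \in \cP^{\langle 2 \rangle}} W_{p,q}(n), \qquad W_{p,q}(n) := \cG_p(n) \cap Z_q(n).
\]
Standard inclusion-exclusion on the first union immediately yields the first sum of the proposition. For the second union I would run a second round of inclusion-exclusion, this one indexed over nonempty subsets $\cA \subseteq \cP^{\langle 2 \rangle}$, reducing the task to computing $\#\bigcap_{(p,q) \in \cA} W_{p,q}(n)$ for each such $\cA$.

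To evaluate each such intersection, the key algebraic identity is
\[
\chi_{W_{p,q}(n)} \;=\; \chi_{\cG_p(n)\cap I_q(n)} \;-\; \chi_{\cG_p(n)\cap\cG_q(n)},
\]
which follows from $\cG_q(n) \subseteq I_q(n)$. Multiplying these indicators over $(p,q) \in \cA$ and distributing produces exactly $2^{\#\cA}$ terms, one for each subset $C \subseteq \cA$: the term associated with $C$ carries sign $(-1)^{\#C}$ and has support precisely $\cG_{\cA,C}(n)$ as defined in the statement. Summing over all matrices $B$ in the ambient universe, then combining the two inclusion-exclusions, recovers the second sum. The main bookkeeping obstacle will be tracking signs through the two nested inclusion-exclusions to arrive at exactly the factor claimed; the conceptual content lies entirely in the reformulation above and the two-term decomposition of $\chi_{W_{p,q}(n)}$.
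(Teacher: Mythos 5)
Your proposal is correct and follows essentially the same route as the paper: the same reformulation of $\cG(I_n^\vee)$ as (first union) minus (union of the sets $\cG_p(n)\cap(I_q(n)-\cG_q(n))$), the same two rounds of inclusion--exclusion, and a final expansion of $\#\bigcap_{(p,q)\in\cA}W_{p,q}(n)$ that is the indicator-function version of the paper's ``inclusion-exclusion-like fact'' $\#\bigcap_j(A_j-B_j)=\sum_{S}(-1)^{\#S}\#(\bigcap_{j\in S}B_j\cap\bigcap_{j\notin S}A_j)$. The only residue is the sign bookkeeping you defer at the end, which the paper likewise compresses into a single substitution step.
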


\begin{proof}
For counting the number of elements of $\cG(I_n^\vee)$, it is convenient to rewrite the minimal generating set given in \Cref{thm:min gens in general}. 
Note that a monomial $\xx^B$ with $B\in \cG_{\cF,\cC}(n)$ is \emph{not} a minimal generator of $I_n^\vee$ if and only if there exists $(\cF',\cC')\in \cP - \{(\cF,\cC)\}$ with
\begin{equation*}
    \xx^B\in \ideal{ \xx^A~\mid~ A\in \cG_{\cF',\cC'}(n) }- \{\xx^A~\mid~A\in \cG_{\cF',\cC'}\}.
\end{equation*}
It follows that 
\begin{equation*}
    \cG(I_n^\vee)=\bigcup_{(\cF,\cC)\in \cP}\cG_{\cF,\cC}(n) \ - \bigcup_{\substack{((\cF,\cC),(\cF',\cC'))\in \cP^2\\ (\cF,\cC)\neq (\cF',\cC')}}\cG_{\cF,\cC}(n)\cap \left(I_{\cF',\cC'}(n)- \cG_{\cF',\cC'}(n)\right).
\end{equation*}
Applying the principle of inclusion-exclusion to the first union yields the first sum in the claimed expression for $\#\cG(I_n^\vee)$. Similarly, by the inclusion-exclusion principle, the cardinality of the second union equals
\begin{equation}\label{eq:secondUnion}
\sum_{\varnothing \neq \cA \subseteq \cP^{\langle 2\rangle}} (-1)^{\# \cA + 1} \ 
   % \sum_{i=1}^{\#\cP^{\langle 2\rangle}}(-1)^{i+1} \sum_{\cA\in\binom{\cP^{\langle 2\rangle}}{i}}
    \#\bigcap_{((\cF,\cC),(\cF',\cC'))\in\cA}\left(\cG_{\cF,\cC}(n)\cap (I_{\cF',\cC'}(n)-\cG_{\cF',\cC'}(n))\right).
\end{equation}
It remains to control the cardinalities of the above intersections. For this, we use the following inclusion-exclusion-like fact: Given finite sets $A_1,\ldots,A_i$ and $B_1,\ldots,B_i$ with $B_j\subseteq A_j$ for $1\leq j\leq i$ one has
$$
\#\bigcap_{j=1}^i(A_j- B_j)=\sum_{S\subseteq [i]}(-1)^{\#S} \ \#\left(\bigcap_{j\in S}B_j\cap\bigcap_{j\in [i]- S}A_j\right).
$$
Since $\cG_{\cF',\cC'}(n)\subseteq  I_{\cF',\cC'}(n)$ and 
$$
\cG_{\cF,\cC}(n)\cap (I_{\cF',\cC'}(n)-\cG_{\cF',\cC'}(n))=\left(\cG_{\cF,\cC}(n)\cap I_{\cF',\cC'}(n)\right)-\left(\cG_{\cF,\cC}(n)\cap \cG_{\cF',\cC'}(n)\right), 
$$
we conclude
\begin{align*}
    &\#\bigcap_{((\cF,\cC),(\cF',\cC'))\in\cA}\left(\cG_{\cF,\cC}(n)\cap (I_{\cF',\cC'}(n)-\cG_{\cF',\cC'}(n))\right)\\
    & = \sum_{C\subseteq \cA}(-1)^{\#C} \# \left(\bigcap_{((\cF,\cC),(\cF',\cC'))\in C}(\cG_{\cF,\cC}(n)\cap \cG_{\cF',\cC'}(n))\cap \bigcap_{((\cF,\cC),(\cF',\cC'))\in \cA- C}(\cG_{\cF,\cC}(n)\cap (I_{\cF',\cC'}(n))\right).
\end{align*}
Substituting this expression in Equation \eqref{eq:secondUnion} yields the claim.
\end{proof}

\begin{remark}
    \label{rem:enough to count}
By  \Cref{lem:InclusionExclusion}, in order to show that $\#\cG(I_n^\vee)$ is a polynomial in $n$ for $n\gg 0$, it suffices to establish that for any  subsets $U$ and $V$ of $\cP$, we have that 
\[ 
\#\left(\bigcap_{(\cF,\cC)\in U}\cG_{\cF,\cC}(n)\cap \bigcap_{(\cF',\cC')\in V}I_{\cF',\cC'}(n)\right )
\]
    is a polynomial in $n$ for $n\gg 0$.
\end{remark}

To analyze the cardinalities of the sets in \Cref{rem:enough to count}, we need to describe the involved intersections.  This requires some preparations. 
    
 It is useful to extend the definitions of $\cG_{\cF, \cC}$ and ideal $I_{\cF,\cC}(n)$ (see in \Cref{def: generalGenSet} and \Cref{thm:min gens in general}) which were originally for pairs $(\cF, \cC) \in \cP$ to more general pairs.

\begin{notation}\label{not: generalized} Denote by $\cQ$ the set of pairs $(\cF, \cC)$, where $\cC$ is an antichain in $2^{[c]}$ and $\cF = (\ell_S \in \NN \; \mid \;  S\in F)$ for some nonempty subset $F$ of $2^{[c]}$ satisfying $F\cap \langle \cC\rangle = \varnothing$. In particular, $\cP\subseteq \cQ$.

Any such $\cF$ corresponds to a unique matrix $M_{\cF} = \left[\Ast_{S \in F} A_S(\ell_S) \right]$ whose support is $F$. As above, we set for an integer $n \ge \sum_{S \in F} \ell_S$, 
\begin{align*}
%\cG_{\cF,\cC} = 
\cG_{\cF,\cC}(n)
%&= \left\{ \left[ \Ast_{S\in F} A_S(\ell_S) \right] \ast \left[ \Ast_{S\in\cC} A_S(\ell_S)\right] \; \mid \; \sum_{S\in\cC}\ell_S = n - \sum_{S\in F}\ell_S \text{ and } \ell_S\geq 0 \text{ if } S\in \cC\right\} \\
&=\left\{ M_{\cF} \ast \left[ \Ast_{S\in\cC} A_S(\ell_S)\right] \; \mid \; \sum_{S\in\cC}\ell_S = n - \sum_{S\in F}\ell_S \text{ and } \ell_S\geq 0 \text{ if } S\in \cC\right\} 
\end{align*}
and 
\begin{equation*}
   I_{\cF,\cC}(n)=\{B \in \{0, 1\}^{c \times n} \; \mid \;  \xx^B\in  \langle \xx^A \; \mid \; A\in \cG_{\cF,\cC}(n) \rangle \subset R_n\}.
\end{equation*}  
\end{notation} 

We now describe the intersections arising in \Cref{rem:enough to count} in the case where $V = \varnothing$. We begin with the special case that identifies matrices which appear simultaneously in two different sets of the form $\cG_{\cF,\cC}(n)$. 

\begin{lemma}
     \label{lem: almostMinimalGenSet} 
Consider $(\cF,\cC) \neq (\cF', \cC')$ in $\cQ$. Set 
$$
M_\cF = \left[ \Ast_{T\in F} A_T(j_T)\right] \quad \text{and}\quad M_{\cF'} = \left[ \Ast_{T\in F'} A_T(\ell_T)\right].
$$
Then one has that $\cG_{\cF,\cC}(n)\cap \cG_{\cF',\cC'}(n)\neq \varnothing$ for all $n > \sum_{T \in F} j_T + \sum_{T \in F'} \ell_T$ if and only if the following three conditions are satisfied:
\begin{enumerate}[(i)]
    \item $F\subseteq F'\cup \cC'$ and $F'\subseteq F\cup\cC$;
    \item If $T\in F\cap F'$, then $j_T = \ell_T$; and
    \item $\cC\cap\cC' \neq \varnothing$.
\end{enumerate}
In this case,  we have $\cG_{\cF,\cC}(n)\cap \cG_{\cF',\cC'}(n)=\cG_{\tilde{\cF},\cC\cap\cC'}$,
%\begin{align*}
%\cG_{\tilde{\cF},\cC \cap \cC'}(n) = \big \{ M_{\tilde{\cF}} \ast \left[\Ast_{T\in \cC\cap \cC'} A_T(i_T) \right] ~ \mid ~ & 
 %i_T \geq 0 \text{ if } T\in \cC\cap\cC' \text{ and }   \\
 %&  \sum_{T\in \cC\cap\cC'} i_T = n - \sum_{T\in F} j_T - \sum_{T\in F'\cap \cC} \ell_T  \big \}, 
%\end{align*} 
where $\tilde{\cF} :=(i_T~\mid~T\in F\cup F')$ with $i_T=j_T$ if $T\in F$ and $i_T=\ell_T$ if $T\in F'$.
\end{lemma}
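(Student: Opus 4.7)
The plan is to exploit that a standard matrix $B$ is uniquely determined by its column-support multiplicities $c_T$ for $T\subseteq [c]$. If $B\in \cG_{\cF,\cC}(n)$, then $c_T=j_T$ for $T\in F$, $c_T=\mu_T\ge 0$ for $T\in \cC$, and $c_T=0$ otherwise; likewise for $\cG_{\cF',\cC'}(n)$ with values $\ell_T$ and $\nu_T$. Membership of $B$ in the intersection thus forces the same multiset of column-supports to admit two such decompositions, and I will read off necessary and sufficient conditions from this identification. Throughout, I will freely use that $F\cap \langle \cC\rangle=\varnothing$ (so in particular $F\cap\cC=\varnothing$) and the symmetric relation for $(F',\cC')$.

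For the forward direction, I fix $B$ in the intersection. Since $j_T\ge 1$ for every $T\in F$ and $\Supp(B)\subseteq F'\cup\cC'$, we get $F\subseteq F'\cup\cC'$; by symmetry $F'\subseteq F\cup\cC$, giving (i). For $T\in F\cap F'$, the two column counts must agree, yielding $j_T=\ell_T$, which is (ii). For (iii) I will argue that if $\cC\cap\cC'=\varnothing$ then $n$ is rigidly determined: for $T\in \cC- F'$ with $c_T>0$, the second decomposition forces $T\in\cC'$, hence $T\in\cC\cap\cC'=\varnothing$, a contradiction, so $c_T=0$; and for $T\in\cC\cap F'$ the count $c_T=\ell_T$ is forced. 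This pins $n=\sum_{T\in F}j_T+\sum_{T\in\cC\cap F'}\ell_T$ to a single integer, contradicting that the intersection is nonempty for all $n$ above the stated bound.

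For the converse, assume (i), (ii), (iii). I first verify $(\tilde{\cF},\cC\cap\cC')\in \cQ$: if some $T\in F$ were above an $S\in \cC\cap\cC'\subseteq\cC$, then $T\in \langle\cC\rangle$, contradicting $F\cap\langle\cC\rangle=\varnothing$, and likewise on the $F'$ side. The equality $\cG_{\cF,\cC}(n)\cap \cG_{\cF',\cC'}(n)=\cG_{\tilde{\cF},\cC\cap\cC'}(n)$ then follows by a regrouping of columns, relying on the identities $F'- F=F'\cap\cC$ and $F- F'=F\cap\cC'$ (consequences of (i) combined with the disjointness hypotheses): columns in the $\tilde{\cF}$-block whose support lies in $F'- F$ get absorbed into the $\cC$-block of the $(\cF,\cC)$-description, and dually for $F- F'$. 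Nonemptiness for $n>\sum_{T\in F}j_T+\sum_{T\in F'}\ell_T$ is automatic, since this bound exceeds $\sum_{T\in F\cup F'}i_T$, and by (iii) the remaining columns can be freely distributed over the nonempty antichain $\cC\cap\cC'$. The main obstacle will be the bookkeeping: every $T\subseteq[c]$ lies in some cell of the partition by membership in each of $F,F',\cC,\cC'$, and keeping the case analysis organized while verifying that column-count totals coincide across the three descriptions requires the simultaneous use of $F\cap\langle\cC\rangle=\varnothing$, its symmetric analogue, and (i).
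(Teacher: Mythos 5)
Your proposal is correct and takes essentially the same route as the paper's proof: both read off conditions (i)--(iii) from the column-support multiplicities of a matrix lying in the intersection (with (iii) forced because otherwise the column count is pinned at a value not exceeding $\sum_{T\in F}j_T+\sum_{T\in F'}\ell_T$), and both establish the identification with $\cG_{\tilde{\cF},\cC\cap\cC'}(n)$ via the regrouping identities $F'-F=F'\cap\cC$ and $F-F'=F\cap\cC'$ coming from (i) and the disjointness hypotheses. No gaps.
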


We note that $\tilde{\cF}$ is well-defined due to $(ii)$.

\begin{proof} 
For ease of notation, set  $\tilde{\cC} = \cC \cap \cC'$.  %\cG_{\tilde{\cF},\tilde{\cC}}(n)

First we show necessity of Conditions $(i)$--$(iii)$. %Let $n\gg 0$, and 
Consider any matrix  $B = \left[\Ast_{T\in 2^{[c]}} A_T(i_T)\right] \in \cG_{\cF,\cC}(n)\cap \cG_{\cF',\cC'}(n)$. Then $F\cup F' \subseteq \Supp B \subseteq (F\sqcup \cC)\cap (F'\sqcup \cC')$, giving necessity of Condition $(i)$. We also have the following facts:
\begin{enumerate}[(a)]
    \item If $T\in F\cap F'$, then $i_T = \ell_T = j_T$. This is exactly Condition $(ii)$.
    \item If $T\in F\cap \cC'$, then $i_T = j_T$,
    \item If $T\in F'\cap \cC$, then $i_T = \ell_T$ ,
    \item If $T\in \cC-(F'\cup \cC')$ or if $T\in \cC' - (\cC\cup F)$, then $i_T = 0$ , and so, 
    $$
    \sum_{T\in \cC\cap \cC'} i_T = n - \sum_{T\in F} j_T - \sum_{T\in F'\cap \cC} \ell_T.
    $$
    \end{enumerate}
Since we assume $n >  \sum_{T \in F} j_T + \sum_{T \in F'} \ell_T \ge \sum_{T\in F} j_T + \sum_{T\in F'\cap\cC} \ell_T$, Fact (d) forces $\cC\cap \cC' \neq \varnothing$. Moreover, it follows that  $B$ is in $\cG_{\tilde{\cF},\tilde{\cC}}(n)$. 

To show sufficiency of $(i)$--$(iii)$, observe that Condition $(i)$ implies 
\[
F \sqcup (F' \cap \cC) = (F \cap F') \sqcup (F \cap \cC') \sqcup (F' \cap \cC) = F' \sqcup (F \cap \cC'). 
\]
Together with Condition $(ii)$, this gives 
\[
M_\cF \ast \left[\Ast_{T\in F'\cap \cC}A_T(\ell_T)\right] = M_{\cF'} \ast \left[\Ast_{T\in F\cap \cC'}A_T(j_T)\right]. 
\]
%if $n\gg 0$,
%$$
%\sum_{T\in F\cap \cC'} j_T \leq \sum_{T\in \cC'} i_T - \abs{\cC' - F} = n - \sum_{T\in F'} \ell_T - \abs{\cC' - F}. 
%$$
It follows that $\cG_{\tilde{\cF},\tilde{\cC}}(n)$ is contained in $\cG_{\cF,\cC}(n)\cap \cG_{\cF',\cC'}(n)$. Moreover, $\cG_{\tilde{\cF},\tilde{\cC}}(n)$ is not empty if $n > \sum_{T \in F} j_T + \sum_{T \in F'} \ell_T$. 
\end{proof} 

%\begin{remark}
    %\label{rem:support of intersect}
%Adopting the notation of \Cref{lem: almostMinimalGenSet}, note that 
%\[
%\Supp (M_{\tilde{\cF}}) = F \cup F'. 
%\]
%\end{remark}

The previous result easily generalizes.  
\begin{cor} \label{cor: easy intersection}
Consider any subset $U$ of $\cP$. If $n > \# U \cdot s (m-1)$, then the set $\bigcap_{(\cF,\cC) \in U} \cG_{\cF,\cC}(n)$ is either empty or equal to $\cG_{\tilde{\cF},\tilde{\cC}}(n)$ for some $(\tilde{\cF},\tilde{\cC}) \in \cQ$ with $\tilde{\cC} = \bigcap_{(\cF,\cC)\in U}\cC \neq \varnothing$. 

In the latter case,  
\[
\# \bigcap_{(\cF,\cC) \in U} \cG_{\cF,\cC}(n) = \binom{\# \tilde{\cC} + n - k - 1}{-1 +  \# \tilde{\cC}} 
\]
is a polynomial in $n$ of degree $-1 + \#\tilde{\cC}$, where $k$ is the number of columns of the matrix $M_{\tilde{\cF}}$. 
\end{cor}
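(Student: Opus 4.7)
The plan is to proceed by induction on $\#U$. The base case $\#U = 1$ is immediate: take $(\tilde{\cF}, \tilde{\cC}) = (\cF, \cC)$, which lies in $\cQ \supseteq \cP$. For the inductive step, write $U = U' \sqcup \{(\cF_0, \cC_0)\}$. By the inductive hypothesis applied to $U'$ (for the same $n$), the intersection $\bigcap_{(\cF, \cC) \in U'} \cG_{\cF, \cC}(n)$ is either empty---in which case we are done---or equals $\cG_{\cF', \cC'}(n)$ for some $(\cF', \cC') \in \cQ$ with $\cC' = \bigcap_{(\cF, \cC) \in U'} \cC$. I then apply \Cref{lem: almostMinimalGenSet} to combine with $\cG_{\cF_0, \cC_0}(n)$: a careful reading shows the lemma's proof uses nothing about the pairs being in $\cP$ beyond the structural data of $\cQ$, so it applies verbatim. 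The conclusion is that the combined intersection is either empty or equals $\cG_{\tilde{\cF}, \tilde{\cC}}(n)$ with $\tilde{\cC} = \cC' \cap \cC_0 = \bigcap_{(\cF, \cC) \in U} \cC$, as required.

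The key technical point is that the hypothesis $n > \#U \cdot s(m-1)$ suffices to apply \Cref{lem: almostMinimalGenSet} at each step. The crucial observation is that for any $(\cF, \cC) \in \cP$ with $\cF = (\ell_S \mid S \in F)$, the matrix $M_\cF$ has at most $s(m-1)$ columns. Indeed, $\cF$ extends to a solution of the system \eqref{eq: generalIneqs}, so for each $i \in [s]$,
\[
\sum_{S \in F \cap \overline{J}_i} \ell_S \;\le\; \sum_{S \in \overline{J}_i} \ell_S \;<\; \sum_{T \in J_i} k_T^{(i)} \;\le\; m,
\]
and since $F \subseteq \overline{J}_1 \cup \dots \cup \overline{J}_s$, summing over $i$ yields $\sum_{S \in F} \ell_S \le s(m-1)$. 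When we merge two pairs via \Cref{lem: almostMinimalGenSet}, the resulting $\tilde{\cF}$ satisfies $\sum_{T \in F \cup F'} i_T \le \sum_{T \in F} j_T + \sum_{T \in F'} \ell_T$, so the column count of $M_{\tilde{\cF}}$ is at most the sum of the column counts of the two input matrices. After $|U|-1$ iterations, the accumulated matrix has at most $|U| \cdot s(m-1) < n$ columns, meeting the hypothesis of the lemma at each stage. If at some stage the intersection of antichains is empty, then the output is empty; otherwise the nonemptiness from the lemma is guaranteed by the $n$-threshold.

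For the counting statement, once the intersection equals $\cG_{\tilde{\cF}, \tilde{\cC}}(n)$ for a nonempty antichain $\tilde{\cC}$, its elements are in bijection with non-negative integer tuples $(\ell_S)_{S \in \tilde{\cC}}$ satisfying $\sum_{S \in \tilde{\cC}} \ell_S = n - k$, where $k$ is the number of columns of $M_{\tilde{\cF}}$. The number of such tuples is the standard stars-and-bars count $\binom{n - k + \#\tilde{\cC} - 1}{\#\tilde{\cC} - 1}$, a polynomial in $n$ of degree $\#\tilde{\cC} - 1$ once $n \ge k$. The main obstacle in this proof is bookkeeping the column bound through the iterations and verifying that \Cref{lem: almostMinimalGenSet} indeed extends from $\cP$ to $\cQ$; both are checked by direct inspection.
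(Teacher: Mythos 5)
Your proposal is correct and follows essentially the same route as the paper: induction on $\#U$, peeling off one pair at a time and merging via Lemma \ref{lem: almostMinimalGenSet} (which is indeed already stated for pairs in $\cQ$, so no extension is needed), with the same bound $\sum_{S\in F}\ell_S\le s(m-1)$ derived from the system \eqref{eq: generalIneqs} to keep the column count below $n$ at every stage, and the same stars-and-bars count at the end. No gaps.
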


\begin{proof} 
We use induction on $\# U$ to prove the assertion and to prove that $M_{\tilde{\cF}}$ has at most $\# U \cdot s\cdot (m-1)$ columns. 
If $U$ is empty, there is nothing to show. Assume $U = \{(\cF, \cC)\}$. By the definition of $\cP$, the pair $(\cF, \cC)$ is determined by an 
$s$-tuple $\jJ = (J_1,\dots, J_s)$  of proper order ideals of $2^{[c]}$. Since every matrix $A_i$ has at most $m$ nonzero columns, the System of Inequalities \eqref{eq: generalIneqs} shows that any solution $L = (\ell_S \; \mid \; S\in \overline{J}_1\cup\dots \cup \overline{J}_s)$ satisfies $\sum_{\ell_S \in \overline{J}_i} \ell_S < m$. It follows that $M_{\cF}$ has at most $s (m-1)$ columns. This completes the argument if $\# U = 1$. 

Assume $\# U \ge 2$. Choose some $(\cF', \cC') \in U$. By the induction hypothesis,  \Cref{lem: almostMinimalGenSet} is applicable to the sets $\cG_{\cF', \cC'}(n)$ and $\bigcap_{(\cF,\cC) \in U - (\cF', \cC') } \cG_{\cF,\cC}(n)$. It shows that $\bigcap_{(\cF,\cC) \in U} \cG_{\cF,\cC}(n)$ is either empty or equal to $\cG_{\tilde{\cF},\tilde{\cC}}(n)$ for some $(\tilde{\cF},\tilde{\cC}) \in \cQ$ with $\tilde{\cC} = \bigcap_{(\cF,\cC)\in U}\cC \neq \varnothing$. As by \Cref{lem: almostMinimalGenSet}, the support of $\tilde{\cF}$ is the union of the supports of all $\cF$ with  $(\cF,\cC)\in U$, this completes the induction step.

If $\cG_{\tilde{\cF},\tilde{\cC}}(n)$ is not empty, its cardinality is $\binom{\# \tilde{\cC} + n - k - 1}{-1 +  \# \tilde{\cC}}$. 
\end{proof}

%%%%%%%%%%%%%%%%%%%
%\subsection{Membership and containment lemmas} 

In order to describe an arbitrary intersection as specified in \Cref{rem:enough to count}, we need a result in the spirit of Lemma \ref{lem:orbit intersect one-orbit case} for the one-orbit case. It may also be used to identify   redundancies in the generating set in \Cref{cor:smaller gen set}.

\begin{lemma}[Membership Lemma] 
    \label{lem: membership} 
Let  $(\cF, \cC) \in \cQ$ be any pair, where  $M_{\cF} = \left[\Ast_{T\in F} A_T(\ell_T)\right]$ (and so $F = \Supp(M_{\cF})$).  
For any integer $\lambda \ge 0$, set $n = \lambda + \sum_{T \in F} \ell_T$ and consider any $0-1$-matrix  $B = \left[\Ast_{T\in 2^{[c]}} A_T(j_T)\right]$ in $\ZZ^{c\times n}$. 
%    
%Consider  nonempty subsets $F, \cC$ of $2^{[c]}$ and, for $T \in F \cup \cC$, integers $\ell_T \ge 0$ such that  
%%??? $\varnothing \in F$ needed, \ref{lem: divisibility up to symmetry} assumes both matrices have same number of columns  ???
%\begin{itemize}
%  %  \item $F\subseteq 2^{[c]}$;
%    \item $\cC\neq \{\varnothing \}$ is an antichain;
%%    \item $\varnothing \in F\cup \cC \subseteq 2^{[c]}$;  ??? needed???
%    \item $F\cap \langle \cC\rangle = \varnothing$;
%    \item $\ell_T>0$  if $T\in F$.
%\end{itemize}
%Define a matrix $M_{\cF} = \left[\Ast_{T\in F} A_T(\ell_T)\right]$ (and so $F = \Supp(M_{\cF})$) and, for any integers $n \ge \sum_{T \in F} \ell_T$ and $\lambda \ge 0 $,
%an ideal
%$$
%I = \left\langle \xx^A \; \mid \; A \in  M_{\cF} \ast \left[\Ast_{T\in \cC} A_T(\ell_T) \; \mid \; \sum_{T\in \cC} \ell_T = \lambda \right]\right\rangle \subset R_n. 
%$$
%%with a fixed matrix $M_F = \left[\Ast_{T\in F} A_T(\ell_T)\right]$ where
%%\begin{itemize}
%%    \item $F\subseteq 2^{[c]}$;
%%    \item $\cC\neq \varnothing$ is an antichain;
%%    \item $\varnothing \in F\cup \cC \subseteq 2^{[c]}$;
%%    \item $F\cap \langle \cC\rangle = \varnothing$;
%%    \item $F- \varnothing \subseteq \Supp(M_F)$, i.e. there is some $T\in F- \varnothing$ with $\ell_T>0$.
%%\end{itemize}
%Consider any $0-1$ matrix  $B = \left[\Ast_{T\in 2^{[c]}} A_T(j_T)\right]$ in $\ZZ^{c\times n}$. 
%% with $n = \lambda + \sum_{T \in F} \ell_T$. 
%
Then $\xx^B$ is in the ideal $I_{\cF,\cC}(n)$ if and only if the following two conditions are satisfied:
\begin{enumerate}[(i)]
    \item For every nonempty subset $E\subseteq F$ with $\cC\not \subseteq \langle E\rangle$, one has
    $$
    \sum_{T\in \langle E \rangle} j_T \geq \sum_{T\in F\cap\langle E\rangle} \ell_T.
    $$
    \item For every subset $E\subseteq F\cup \cC$ with $\cC\subseteq E$, one has 
    $$
    \sum_{T\in \langle E\rangle} j_T \geq \sum_{T\in \langle E\rangle} \ell_T = \lambda +  \sum_{T\in F\cap \langle E\rangle} \ell_T.
    $$
\end{enumerate}
\end{lemma}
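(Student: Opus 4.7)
My plan proceeds by handling the forward and backward directions separately. The forward direction is a direct application of the Divisibility Lemma (\Cref{lem: divisibility up to symmetry}). Suppose $\xx^B \in I_{\cF, \cC}(n)$; then there exists $(m_S)_{S \in \cC} \in \NN_0^{\cC}$ with $\sum_S m_S = \lambda$ such that $\xx^{A'}$ divides $\xx^B$ up to symmetry, where $A' = M_{\cF} \ast [\Ast_{S \in \cC} A_S(m_S)]$. \Cref{lem: divisibility up to symmetry} yields, for every nonempty $E \subseteq F \cup \cC$,
\[
\sum_{T \in F \cap \langle E \rangle} \ell_T + \sum_{S \in \cC \cap \langle E \rangle} m_S \leq \sum_{T \in \langle E \rangle} j_T.
\]
Specializing to $E \subseteq F$ with $\cC \not\subseteq \langle E \rangle$ and dropping the nonnegative $m_S$-sum yields (i); specializing to $\cC \subseteq E \subseteq F \cup \cC$ (so that $\cC \cap \langle E \rangle = \cC$ and $\sum_{S \in \cC} m_S = \lambda$) yields (ii).

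For the converse, I would reformulate the existence of $(m_S)$ as a polymatroid feasibility problem. By \Cref{lem: divisibility up to symmetry}, finding such $(m_S)$ amounts to finding $(m_S) \in \NN_0^{\cC}$ with $\sum_S m_S = \lambda$ and $\sum_{S \in J \cap \cC} m_S \leq \rho(J)$ for every order ideal $J$ of $2^{[c]}$, where $\rho(J) := \sum_{T \in J} j_T - \sum_{T \in F \cap J} \ell_T$ is modular in $J$. Define
\[
\phi(P) := \min\{\rho(J) : J \text{ an order ideal of } 2^{[c]},\ J \cap \cC \supseteq P\} \qquad (P \subseteq \cC),
\]
so the constraints become $m(P) \leq \phi(P)$ for all $P \subseteq \cC$.

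The technical heart of the proof is verifying that $\phi$ is monotone and submodular with $\phi(\varnothing) = 0$ and $\phi(\cC) \geq \lambda$. Monotonicity is immediate; submodularity follows from modularity of $\rho$ together with the fact that both $J_1 \cup J_2$ and $J_1 \cap J_2$ are order ideals, so that witnesses for $\phi(P_1), \phi(P_2)$ produce witnesses for $\phi(P_1 \cup P_2), \phi(P_1 \cap P_2)$ whose $\rho$-values sum to $\rho(J_1) + \rho(J_2)$. For $\phi(\varnothing) = 0$: for any order ideal $J$, the identity $F \cap \langle F \cap J \rangle = F \cap J$ (valid because $J$ is upward closed) gives $\rho(J) \geq \rho(\langle F \cap J \rangle)$; applying (i) if $\cC \not\subseteq \langle F \cap J \rangle$ or else (ii) to $E' := (F \cap J) \cup \cC$ (with the case $F \cap J = \varnothing$ being trivial) shows the latter is nonnegative. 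For $\phi(\cC) \geq \lambda$: for any order ideal $J$ with $\cC \subseteq J$, setting $E' := (F \cap J) \cup \cC \subseteq F \cup \cC$ one checks, using $F \cap \langle \cC \rangle = \varnothing$ from the definition of $\cQ$, that $\langle E' \rangle \subseteq J$ and $F \cap \langle E' \rangle = F \cap J$; hence (ii) gives $\rho(J) \geq \rho(\langle E' \rangle) \geq \lambda$.

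With these properties of $\phi$ in place, Edmonds' theorem on integer polymatroids (applied to the truncation $P \mapsto \min(\phi(P), \lambda)$, which is again integer-valued, monotone, and submodular) supplies an integer vector $(m_S) \in \NN_0^{\cC}$ with $\sum_S m_S = \lambda$ and $m(P) \leq \phi(P)$ for all $P \subseteq \cC$; this yields $A' \in \cG_{\cF, \cC}(n)$ with $\xx^{A'} \mid \xx^B$ up to symmetry, so $\xx^B \in I_{\cF, \cC}(n)$. The principal obstacle is the case analysis required to derive the bounds $\phi(\varnothing) \geq 0$ and $\phi(\cC) \geq \lambda$ from hypotheses (i) and (ii), which only directly constrain $\rho$ on order ideals generated by certain antichains inside $F \cup \cC$; the key reductions are the operations $J \mapsto \langle F \cap J \rangle$ and $J \mapsto \langle (F \cap J) \cup \cC \rangle$, which bring arbitrary order ideals into the scope of (i) and (ii) respectively.
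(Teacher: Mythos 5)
Your necessity argument coincides with the paper's: apply the Divisibility Lemma (\Cref{lem: divisibility up to symmetry}) to $J=\langle E\rangle$ and either drop or fully evaluate the $\cC$-part of the left-hand sum. For sufficiency, however, you take a genuinely different route. The paper argues by induction on $\#\cC$: it picks $S\in\cC$, greedily sets $\ell_S=\min\{\lambda,\sum_{T\in\langle S\rangle-\langle F\cup\cC-S\rangle}j_T\}$, invokes the induction hypothesis on $\cC-S$ with budget $\lambda-\ell_S$, and then checks the divisibility inequalities for all $E$ by a case analysis on whether $S\in E$. You instead package feasibility as an integer polymatroid problem: $\phi(P)=\min\{\rho(J):J\cap\cC\supseteq P\}$ is monotone and submodular because $\rho$ is modular and the order ideals form a lattice; your reductions $J\mapsto\langle F\cap J\rangle$ and $J\mapsto\langle (F\cap J)\cup\cC\rangle$ correctly convert hypotheses (i)--(ii) into $\phi(\varnothing)=0$ and $\phi(\cC)\ge\lambda$ (the identity $F\cap\langle F\cap J\rangle=F\cap J$ and the defining condition $F\cap\langle\cC\rangle=\varnothing$ of $\cQ$ are exactly what is needed, and the subcase $\cC\subseteq\langle F\cap J\rangle$ is covered by applying (ii) to $(F\cap J)\cup\cC$); Edmonds' theorem applied to the truncation $\min(\phi,\lambda)$ then yields the required integral $(m_S)$ with $\sum_S m_S=\lambda$. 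Both arguments are correct and both interpret membership in $I_{\cF,\cC}(n)$ as divisibility up to symmetry by some element of $\cG_{\cF,\cC}(n)$, as the paper's own proof does. Your version is shorter and more conceptual but imports Edmonds' integrality theorem as a black box, whereas the paper's greedy induction is self-contained---and is, in essence, the greedy algorithm for your polymatroid executed one element of $\cC$ at a time.
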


\begin{proof}
Applying the Divisibility Lemma \ref{lem: divisibility up to symmetry} to the case where $J = \langle E\rangle$ gives $\sum_{T\in \langle E\rangle} j_T \geq \sum_{T\in \langle E\rangle} \ell_T$, which shows 
necessity of  the conditions in $(i)$ and $(ii)$. 
Notice that the obtained inequality  is stronger than the condition in $(i)$ for the case where $E\subseteq F$.

To prove that Conditions $(i)$ and $(ii)$ are sufficient, we proceed by induction on $\abs{\cC}$. Assume that $\#\cC = 1$. By Lemma \ref{lem: divisibility up to symmetry}, we have to show that $\sum_{T\in \langle E\rangle} j_T \geq \sum_{T\in \langle E\rangle} \ell_T$
%$\sum_{T\in J} j_T \geq \sum_{T\in J} \ell_T$ for any order ideal $J$ generated by 
for any nonempty subset $E$ of $F \cup \cC$. 
We consider several cases. If $\cC\subseteq E$, then the desired inequality is true by Condition (ii). Suppose $\cC\not \subseteq E$. 
 Since $\#\cC = 1$ and $E \subset F \cup \cC$ this implies $E \subseteq F$. If $\cC\subseteq \langle E\rangle$, the divisibility condition of Lemma \ref{lem: divisibility up to symmetry} for $J = \langle E\rangle$ is the same as condition $(ii)$ for $E\cup \cC$. 
 If $\cC\not \subseteq \langle E\rangle$, then the assumption $\#\cC = 1$ implies that $\cC\cap \langle E\rangle = \varnothing$, and so the divisibility condition for $J = \langle E\rangle$ is true by Condition $(i)$.
%and consider any $\varnothing = E \subseteq F \cup \cC$. We have several cases to consider:
%\begin{itemize}
%    \item $\cC\subseteq E$: The desired inequality is true by Condition (ii). 
%    
%    Condition (ii) for $E$ is true by applying Lemma \ref{lem: divisibility up to symmetry} in the case where $J = \langle E\rangle$.
%    \item $\cC\not \subseteq E$ (so $E\subset F$) and $\cC\subseteq \langle E\rangle$: The divisibility condition of Lemma \ref{lem: divisibility up to symmetry} for $J = \langle E\rangle$ is the same as condition (ii) for $E\cup \cC$.
%    \item $\cC\not \subseteq E$ (so $E\subset F$) and $\cC\not \subseteq \langle E\rangle$: the assumption $\abs{\cC} = 1$ implies that $\cC\cap \langle E\rangle = \varnothing$, so the divisibility condition for $J = \langle E\rangle$ is true by $(i)$.
%\end{itemize}
%
%$\cC\not \subseteq \langle E\rangle$: the assumption $\abs{\cC} = 1$ implies that $\cC\cap \langle E\rangle = \varnothing$, so the divisibility condition for $J = \langle E\rangle$ is true by $(i)$.

Now suppose that $\#\cC \geq 2$. Choose any $S \in \cC$ and set
$$
\ell_S = \min \big \{\lambda, \sum_{T\in \langle S\rangle - \langle F\cup \cC- S\rangle} j_T \big \}.
$$
By the induction hypothesis, there is a fixed tuple $(\ell_T)_{T\in \cC- S}$ such that $\xx^B$ is divisible by  $\xx^{\tilde{B}}$ with 
$$
\tilde B = M_F \ast \left[\Ast_{T\in \cC-S} A_T(\ell_T)\right], 
$$
where $\sum_{T\in \cC-S} \ell_T = \lambda -\ell_S$. 
By Divisibility Lemma \ref{lem: divisibility up to symmetry}, this means that for every subset $\varnothing \neq E \subseteq F\cup \cC- S$, one has
\begin{equation}\label{eq: ineqMembershipLemma1}
    \sum_{T\in \langle E\rangle - S} \ell_T \leq \sum_{T\in \langle E\rangle} j_T.
\end{equation}
We claim that $\xx^B$ is divisible by $\xx^{B'}$ up to symmetry with 
$$
B' = M_{\cF} \ast \left[\Ast_{T\in \cC} A_T(\ell_T)\right] = \tilde B \ast A_S(\ell_S),
$$
that is, for every $\varnothing \neq E \subseteq F\cup \cC$, one has
\begin{equation}\label{eq: ineqMembershipLemma2}
    \sum_{T\in \langle E \rangle} \ell_T \leq \sum_{T\in \langle E\rangle} j_T.
\end{equation} 

To this end we consider several cases. 
If $S\in \langle F\cup \cC-S\rangle$, then $\ell_S = 0$ by definition, and so $B' = \tilde B$. Therefore, assume that $S\not\in \langle F\cup \cC - S\rangle$. For every subset $E\subset F\cup \cC$ with $S\notin E$, this implies that $S\notin \langle E\rangle$, so  Inequality (\ref{eq: ineqMembershipLemma2}) follows from Inequality (\ref{eq: ineqMembershipLemma1}).

Consider now any subset $E\subseteq F\cup \cC$ with $S\in E$. By definition of $\ell_S$, one has
\begin{equation}\label{eq: lambdaIneq}
    \ell_S \quad \leq \quad \sum_{T\in \langle S\rangle - \langle F\cup \cC - S\rangle} j_T \quad \leq \quad \sum_{T\in \langle S\rangle - \langle E-S\rangle} j_T.
\end{equation}
Moreover, Inequality (\ref{eq: ineqMembershipLemma1}) applied to $E - S$ gives
\begin{equation} \label{eq: E-S ineq}
    \sum_{T\in \langle E-S\rangle - S} \ell_T \; \leq \sum_{T\in \langle E-S\rangle} j_T.
\end{equation}
Notice the decomposition 
\begin{equation}
    \label{eq:decompose}
\langle E\rangle = \langle E-S\rangle \sqcup (\langle S\rangle - \langle E - S\rangle). 
\end{equation}
Thus, adding Inequalities (\ref{eq: lambdaIneq}) and (\ref{eq: E-S ineq})  yields
\begin{equation}\label{eq: almostThere}
\ell_S + \sum_{T\in \langle E-S\rangle - S} \ell_T \leq \sum_{T\in\langle E\rangle} j_T.
\end{equation}
Since we chose $S$ in $\cC$, we get $F\cap \langle S\rangle \subseteq F\cap \langle \cC\rangle = \varnothing$ by assumption, and so Equation \eqref{eq:decompose} implies 
\begin{equation}\label{eq: FcapE}
    F\cap \langle E\rangle = (F\cap \langle E - S\rangle ) \sqcup (F\cap (\langle S\rangle - \langle E - S\rangle )) = F\cap \langle E - S\rangle .
\end{equation}
Using Equation \eqref{eq:decompose} again, we obtain 
\[
\cC \cap \langle E \rangle = (\cC \cap \langle E - S \rangle)   \sqcup  \big ( \cC \cap ( \langle S \rangle -  \langle E - S ) \rangle \big ). 
\]
We also have that $\cC \cap \big ( \langle S \rangle -  \langle E - S \rangle \big ) \subseteq \cC \cap \langle S \rangle = \{S\}$ because $\cC$ is an antichain that contains $S$. It follows that 
\[
(\cC \cap \langle E \rangle) - S \subseteq \cC \cap \langle E - S \rangle, 
\]
which implies 
\[
(\cC \cap \langle E \rangle) - S = ( \cC \cap \langle E - S \rangle) - S.  
\]
Combining this with Equation \eqref{eq: FcapE} and the fact that $F \cap \cC = \varnothing$ by assumption, and so $S \notin F$, we conclude 
\begin{align*}
\big ( (F \cup \cC) \cap \langle E - S \rangle \big )  - S 
& = (F \cap \langle E - S \rangle)  \sqcup \big (  (\cC \cap \langle E - S \rangle) - S. \big ) \\
& = (F \cap \langle E \rangle)  \sqcup \big (  (\cC \cap \langle E  \rangle) - S \big ) \\
& = (F \cup \cC) \cap \langle E \rangle )  - S.  
\end{align*}
This shows that 
\[
\ell_S + \sum_{T\in \langle E-S\rangle - S}  \ell_T = \ell_S + \sum_{T\in \langle E \rangle - S} \ell_T =  \sum_{T\in \langle E\rangle } \ell_T . 
\]
Hence, Inequality \eqref{eq: almostThere} gives
$$
\sum_{T\in \langle E\rangle} \ell_T \leq \sum_{T\in \langle E\rangle} j_T,
$$
which completes the argument.
\end{proof}

We now apply Membership \Cref{lem: membership} to the case where the matrix $B$ belongs to some set $\cG_{\cF', \cC'}(n)$ with $(\cF', \cC') \in \cQ$.

\begin{cor} 
    \label{cor: ADmembershipLemma}
Consider any pairs $(\cF, \cC), (\cF', \cC') \in \cQ$ with  $M_\cF = \left[\Ast_{T\in F} A_T(j_T)\right]$ and $M_{\cF'} = \left[\Ast_{T\in F'} A_T(\ell_T)\right]$. 
Then a monomial  $\xx^B$ with $B = M_\cF \ast \left[\Ast_{T\in \cC} A_T(j_T)\right] \in G_{\cF, \cC} (n)$ belongs to the ideal $I_{\cF', \cC'} (n)$  if and only if the following two  conditions are satisfied:
%
%%\begin{notation}\label{not: ADmembershipLemma} 
%For nonempty subset $F, F'$ of $2^{[c]}$, 
%fix $0 - 1$ matrices $M_\cF = \left[\Ast_{T\in F} A_T(j_T)\right]$ and $M_{\cF'} = \left[\Ast_{T\in F'} A_T(\ell_T)\right]$, as well as nonempty antichains $\cC, \cC'\subset 2^{[c]}$, such that
%\begin{itemize}
%    \item $F =  \Supp(M_\cF)$ and $F' = \Supp(M_{\cF'})$; and  
%    \item  $F\cap\langle \cC\rangle = \varnothing$ and $F'\cap \langle \cC' \rangle = \varnothing$.
%    \end{itemize}
%    For any $n \ge \max \big \{\sum_{T \in F} j_T,  \sum_{T \in F'}  \ell_T \big \}$, define a set of $c \times n$ matrices 
%    $$
%    G = \left\{M_\cF \ast \left[\Ast_{T\in \cC} A_T(j_T)\right] \; \mid \;  \sum_{T\in \cC} j_T = n - \sum_{T\in F} j_T \right\}
%    $$
%    and an ideal
%    $$
%    I = \left\langle \xx^A \; \mid \; A \in \big \{ M_{\cF'} \ast \left[\Ast_{T\in \cC'} A_T(\ell_T)\right] \; \mid \;  \sum_{T\in \cC'} \ell_T = n- \sum_{T\in F'} \ell_T \big \} \right \rangle \subset R_n.
%    $$
%%\end{notation}
%%\begin{cor}\label{cor: ADmembershipLemma} Adopt Notation \ref{not: ADmembershipLemma}. 
%Then   $\xx^B \in I$ for some matrix  $B = M_\cF \ast \left[\Ast_{T\in \cC} A_T(j_T)\right] \in G$ if and only if the following two  conditions are satisfied:
\begin{enumerate}[(i)]
    \item For every  nonempty subset $E \subseteq F'$ with $\cC' \not\subseteq \langle E \rangle$, one has
    $$
    \sum_{T\in \cC \cap \langle E\rangle} j_T \geq \sum_{T\in F'\cap \langle E\rangle} \ell_T - \sum_{T\in F\cap \langle E\rangle} j_T.
    $$
    \item For any subset $E \subseteq F'\cup \cC'$ with $\cC' \subseteq E$, one has
    $$
    \sum_{T\in \cC-\langle E\rangle} j_T \leq \sum_{T\in F'-\langle E\rangle} \ell_T - \sum_{T\in F-\langle E\rangle} j_T.
    $$
\end{enumerate}
\end{cor}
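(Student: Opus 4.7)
The corollary is essentially a direct application of the Membership Lemma (\Cref{lem: membership}) to the pair $(\cF', \cC')$ and the matrix $B$, followed by bookkeeping that exploits the specific shape of $B$.

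The plan is to invoke \Cref{lem: membership} with $(\cF', \cC')$ in place of $(\cF, \cC)$, so that the parameter ``$\ell_T$'' of that lemma is now given by the entries of $\cF'$ and the parameter ``$j_T$'' is read off the columns of $B$. Setting $\lambda' = n - \sum_{T \in F'} \ell_T$, the defining identity of the Membership Lemma becomes $n = \lambda' + \sum_{T \in F'} \ell_T$, which is automatic since $B$ has $n$ columns. Then $\xx^B \in I_{\cF',\cC'}(n)$ is equivalent to the two inequality conditions of the lemma, and the task reduces to rewriting these in the form claimed in the corollary.

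For condition $(i)$, observe that since $(\cF, \cC) \in \cQ$ we have $F \cap \langle \cC \rangle = \varnothing$ and in particular $F \cap \cC = \varnothing$. Because the column supports of $B$ lie in $F \sqcup \cC$, for any $E \subseteq F'$ the sum on the left of condition $(i)$ of the Membership Lemma splits as
\[
\sum_{T \in \langle E\rangle} j_T \;=\; \sum_{T \in F \cap \langle E \rangle} j_T \;+\; \sum_{T \in \cC \cap \langle E\rangle} j_T.
\]
Substituting this into $\sum_{T\in \langle E\rangle} j_T \geq \sum_{T\in F'\cap\langle E\rangle} \ell_T$ and isolating the $\cC$-sum on the left yields condition $(i)$ of the corollary.

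For condition $(ii)$, I would use the complement identities $\sum_{T \in F\cup \cC} j_T = n$ and $\lambda' + \sum_{T \in F' \cap \langle E \rangle} \ell_T = n - \sum_{T \in F' - \langle E\rangle} \ell_T$ to rewrite $\sum_{T \in \langle E\rangle} j_T \geq \lambda' + \sum_{T\in F'\cap \langle E \rangle} \ell_T$ as
\[
\sum_{T \in F - \langle E\rangle} j_T \;+\; \sum_{T \in \cC - \langle E \rangle} j_T \;\leq\; \sum_{T \in F' - \langle E\rangle} \ell_T,
\]
and then solve for $\sum_{T \in \cC - \langle E\rangle} j_T$, which produces condition $(ii)$ of the corollary. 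The only step requiring care is verifying that the two decompositions (the one over $\langle E\rangle$ for condition $(i)$ and the one over its complement for condition $(ii)$) respect the disjointness $F \cap \cC = \varnothing$; this is where the hypothesis $F \cap \langle \cC \rangle = \varnothing$ is used, and it is the mildest of obstacles. No additional ingredients beyond \Cref{lem: membership} are needed.
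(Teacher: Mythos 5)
Your proposal is correct and follows essentially the same route as the paper: apply the Membership Lemma with $(\cF',\cC')$ in the role of the pair defining the ideal, split $\sum_{T\in\langle E\rangle}j_T$ over the disjoint union $F\sqcup\cC$ for condition (i), and pass to complements via $\sum_{T\in F\cup\cC}j_T=n$ and $\lambda'+\sum_{T\in F'\cap\langle E\rangle}\ell_T=n-\sum_{T\in F'-\langle E\rangle}\ell_T$ for condition (ii). No gaps.
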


\begin{proof}
There are two cases to consider:
\begin{itemize}
    \item If $\varnothing \neq E \subseteq F'$ satisfies $\cC' \not \subseteq \langle E\rangle$, then Condition (i) from Membership  Lemma \ref{lem: membership} gives
    $$
    \sum_{T\in F\cap\langle E\rangle} j_T + \sum_{T\in \cC\cap\langle E\rangle} j_T \geq \sum_{T\in F'\cap \langle E\rangle} \ell_T
    $$
    which is equivalent to Condition (i) in the claim.
    \item If $\cC' \subseteq E\subseteq F'\cup \cC'$, then Condition (ii) in Membership Lemma \ref{lem: membership} yields
    \begin{equation}\label{eq: claimiiADmembership}
    \sum_{T\in (F\cup \cC)\cap \langle E\rangle} j_T \geq \sum_{T\in F'\cap \langle E\rangle} \ell_T + n- \sum_{T\in F'} \ell_T = n - \sum_{T\in F'-\langle E\rangle} \ell_T
    \end{equation}
    where the lefthand side  is equal to
    $$
    n - \sum_{T\in F-\langle E\rangle} j_T - \sum_{T\in \cC - \langle E\rangle} j_T.
    $$
    Therefore, Inequality (\ref{eq: claimiiADmembership}) is equivalent to Condition (ii) in the claim.    
\end{itemize}
\end{proof}

We summarize the discussion above to describe the intersections appearing in \Cref{rem:enough to count}. 

\begin{prop}
   \label{prop: arbitrary intersection}
For any subsets $U, V$ of $\cP$ and any integer $n \gg 0$, consider the set 
\[
\cI = \bigcap_{(\cF,\cC)\in U}\cG_{\cF,\cC}(n)\cap \bigcap_{(\cF',\cC')\in V}I_{\cF',\cC'}(n). 
\]
If $\bigcap_{(\cF,\cC)\in U}\cG_{\cF,\cC}(n)$ is empty, then $\cI$ is empty. 
Otherwise,   $\bigcap_{(\cF,\cC)\in U}\cG_{\cF,\cC}(n) = \cG_{\tilde{\cF},\tilde{\cC}}(n)$ for some 
$(\tilde{\cF},\tilde{\cC}) \in \cQ$ with $\tilde{\cC} = \bigcap_{(\cF,\cC)\in U}\cC \neq \varnothing$. Moreover, a matrix $B$  is in the set $\cI$ if and only if
    $B= M_{\tilde{\cF}} \ast \left[\Ast_{T \in \tilde{\cC}} A_T(j_T)\right] \in \cG_{\tilde{\cF},\tilde{\cC}}(n)$ and, for every $(\cF', \cC') \in V$, 
    the following two  conditions are satisfied:
\begin{enumerate}[(i)]
    \item For every  nonempty subset $E \subseteq F'$ with $\cC' \not\subseteq \langle E \rangle$, one has
    $$
    \sum_{T\in\tilde{\cC} \cap \langle E\rangle} j_T \ge \sum_{T\in F'\cap \langle E\rangle} \ell_{T, \cF'} - \sum_{T\in \tilde{\cF} \cap \langle E\rangle} j_T ; 
    $$
    \item For any subset $E \subseteq F'\cup \cC'$ with $\cC' \subseteq E$, one has
    $$
    \sum_{T\in\tilde{\cC} -\langle E\rangle} j_T \le \sum_{T\in F'-\langle E\rangle} \ell_{T, \cF'} - \sum_{T\in \tilde{\cF} - \langle E\rangle} j_T, 
    $$
\end{enumerate}
where $M_{\cF'} = [\Ast_{T \in \Supp (\cF')} A_T (\ell_{T, \cF'})]$. 
\end{prop}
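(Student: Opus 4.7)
The plan is to assemble the statement from two results already established, so the proof is essentially a bookkeeping exercise with no serious new combinatorial work. The first assertion is immediate from the inclusion $\cI \subseteq \bigcap_{(\cF,\cC)\in U}\cG_{\cF,\cC}(n)$, so I would dispose of it in one sentence.

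For the second assertion, the starting point is \Cref{cor: easy intersection}, which already gives exactly what is needed: for $n \gg 0$ (in particular for $n > \#U \cdot s (m-1)$), either $\bigcap_{(\cF,\cC)\in U}\cG_{\cF,\cC}(n)$ is empty, or there is a unique pair $(\tilde{\cF},\tilde{\cC}) \in \cQ$ with $\tilde{\cC} = \bigcap_{(\cF,\cC)\in U}\cC$ such that $\bigcap_{(\cF,\cC)\in U}\cG_{\cF,\cC}(n) = \cG_{\tilde{\cF},\tilde{\cC}}(n)$. I would cite this directly; note that the hypothesis $(\tilde{\cF},\tilde{\cC})\in \cQ$ in particular guarantees $\tilde{\cC}\neq \varnothing$ and $\Supp(\tilde{\cF})\cap \langle \tilde{\cC}\rangle = \varnothing$, so the symbol $\cG_{\tilde{\cF},\tilde{\cC}}(n)$ is meaningful in the sense of \Cref{not: generalized}.

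Assuming this intersection is $\cG_{\tilde{\cF},\tilde{\cC}}(n)$, an arbitrary matrix $B$ in that set can be written uniquely as
\[
B = M_{\tilde{\cF}} \ast \left[\Ast_{T \in \tilde{\cC}} A_T(j_T)\right], \qquad j_T \ge 0, \quad \sum_{T \in \tilde{\cC}} j_T = n - \sum_{T \in \Supp(\tilde{\cF})} \ell_{T,\tilde{\cF}}.
\]
To conclude, I need $B \in \cI$ if and only if $\xx^B \in I_{\cF',\cC'}(n)$ for every $(\cF',\cC') \in V$. For each such fixed pair, \Cref{cor: ADmembershipLemma} (applied with the roles of $(\cF,\cC)$ and $(\cF',\cC')$ in that corollary played by $(\tilde{\cF},\tilde{\cC})$ and the current $(\cF',\cC')$) precisely gives the two inequality conditions (i) and (ii) in the statement, using $\tilde{\cF}$ in place of $F$, $\tilde{\cC}$ in place of $\cC$, and $\ell_{T,\cF'}$ to denote the column multiplicities in $M_{\cF'}$. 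Gathering these conditions over all $(\cF',\cC') \in V$ yields the equivalence in the proposition.

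The only thing that could be called an obstacle is the care needed to verify that $(\tilde{\cF},\tilde{\cC})$ lies in $\cQ$ so that \Cref{cor: ADmembershipLemma} applies, but this is part of the content of \Cref{cor: easy intersection} (its iterative proof shows that $\Supp(\tilde{\cF})$ is the union of the supports of the $\cF$ with $(\cF,\cC)\in U$, and the condition $\Supp(\tilde{\cF})\cap \langle \tilde{\cC}\rangle = \varnothing$ is inherited from each $(\cF,\cC) \in U$). So the proof is short and entirely a composition of \Cref{cor: easy intersection} with \Cref{cor: ADmembershipLemma}.
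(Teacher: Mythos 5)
Your proposal is correct and matches the paper's own proof, which likewise disposes of the first assertion via \Cref{cor: easy intersection} and then obtains the characterization of membership in each $I_{\cF',\cC'}(n)$ from \Cref{cor: ADmembershipLemma}. Your extra remark checking that $(\tilde{\cF},\tilde{\cC})\in\cQ$ so that the membership corollary applies is a sensible (if implicit in the paper) addition.
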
 

\begin{proof}
The assertion about $\bigcap_{(\cF,\cC)\in U}\cG_{\cF,\cC}(n)$ it true by \Cref{cor: easy intersection}. 
Now, the characterization of the monomials in $\cI$ follows from \Cref{cor: ADmembershipLemma}. 
\end{proof}

We are ready to establish the main result of this section, which is  \Cref{thm:intro-gen-degrees} of the Introduction. We restate an equivalent version here for the convenience of the reader. 

\begin{theorem} 
    \label{thm:gens from intro} 
Consider nonzero ideals $I_n= \langle \Sym(n)(\xx^{A_1},\ldots,\xx^{A_s}) \rangle$, where $A_i\in \{0,1\}^{c\times m}$ for $1\leq i\leq s$ and $n\geq m$. 
The number of $\Sym(n)$-orbits needed to minimally generate $I_n^\vee$ is a polynomial in $n$ whose degree is at most 
$ \binom{c}{\lfloor \frac c2 \rfloor} -1$ if  $n$ is sufficiently large. 

Moreover, there is a linear function $d \colon \NN \to \NN$ such that, for $n \gg 0$, the least degree of a minimal generator of $I_n^{\vee}$ equals $d(n)$. The number of $\Sym(n)$-orbits needed to produce the minimal generators of $I_n^{\vee}$ of degree $d(n)$ is given by a polynomial in $n$. 
\end{theorem}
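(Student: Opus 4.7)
The plan is to combine the inclusion-exclusion formula of \Cref{lem:InclusionExclusion} (cf.~\Cref{rem:enough to count}) with the polyhedral description in \Cref{prop: arbitrary intersection} and then apply \Cref{cor:counting}. First I would reduce the problem: for any $U,V\subseteq\cP$, it suffices to prove that the cardinality of
\[
\cI(n):=\bigcap_{(\cF,\cC)\in U}\cG_{\cF,\cC}(n)\cap \bigcap_{(\cF',\cC')\in V}I_{\cF',\cC'}(n)
\]
is a polynomial in $n$ of degree at most $\binom{c}{\lfloor c/2\rfloor}-1$ for $n\gg 0$. By \Cref{cor: easy intersection} together with \Cref{prop: arbitrary intersection}, for $n$ large this set is either empty or corresponds bijectively to the tuples $(j_T)_{T\in \tilde{\cC}}\in \mathbb{N}_0^{|\tilde{\cC}|}$ satisfying finitely many inequalities of the form $\sum_{T\in \tilde{\cC}\cap\langle E\rangle}j_T\geq a_E$ or $\sum_{T\in \tilde{\cC}-\langle E\rangle}j_T\leq b_E$ (with $a_E,b_E\in\mathbb{Z}$ independent of $n$), together with the equality $\sum_{T\in\tilde{\cC}}j_T=n-f_{\tilde{\cF}}$ for a constant $f_{\tilde{\cF}}$, where $\tilde{\cC}=\bigcap_{(\cF,\cC)\in U}\cC$ is an antichain in $2^{[c]}$.

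All these inequalities are defined by sums of coordinates, so the ambient polyhedron in $\mathbb{R}^{|\tilde{\cC}|}$ is precisely of the form $P_{\aa,\bb}$ in \Cref{not: polyhedron}. After the trivial reparameterization absorbing $f_{\tilde{\cF}}$, \Cref{cor:counting} yields that $\#\cI(n)$ is a polynomial in $n$ of degree at most $|\tilde{\cC}|-1$ eventually. By Sperner's theorem $|\tilde{\cC}|\leq\binom{c}{\lfloor c/2\rfloor}$, and summing the finitely many signed contributions in \Cref{lem:InclusionExclusion} delivers the claimed bound on $\#\cG(I_n^\vee)$.

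For the statement about minimum degree, I would compute for each $(\cF,\cC)\in\cP$ and $B\in\cG_{\cF,\cC}(n)$ with free parameters $(j_T)_{T\in\cC}$ that
\[
\deg(\xx^B)=\alpha_{\cF,\cC}+\sum_{T\in\cC}|T|\,j_T,\qquad \alpha_{\cF,\cC}:=\sum_{T\in F}|T|\,\ell_T,\quad \sum_{T\in\cC}j_T=n-f_{\cF},
\]
where $f_\cF:=\sum_{T\in F}\ell_T$. Minimizing over $j_T\geq 0$ gives $\deg(\xx^B)\geq \alpha_{\cF,\cC}+d_\cC(n-f_\cF)$ with $d_\cC:=\min_{T\in\cC}|T|$, and this bound is attained. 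Setting
\[
d:=\min_{(\cF,\cC)\in\cP}d_\cC,\qquad e:=\min\bigl\{\alpha_{\cF,\cC}-d\cdot f_\cF\;\big|\;(\cF,\cC)\in\cP,\ d_\cC=d\bigr\},
\]
the minimum degree attained on $\bigcup_{(\cF,\cC)\in\cP}\cG_{\cF,\cC}(n)$ equals $d(n):=dn+e$ for $n\gg 0$. Any monomial of this minimum degree is automatically a minimal generator of $I_n^\vee$ (a proper divisor would have strictly smaller degree), and by \Cref{cor:smaller gen set} every minimum-degree minimal generator of $I_n^\vee$ arises this way. Hence the least degree of a minimal generator of $I_n^\vee$ is precisely the linear function $d(n)$.

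Finally, to count the orbits of minimal generators of degree $d(n)$, I would run the same inclusion-exclusion but impose the additional equality $\sum_{T\in\cC}|T|j_T=d(n)-\alpha_{\cF,\cC}$ inside each set. Combined with $\sum_{T\in\cC}j_T=n-f_\cF$, this yields the $n$-independent equation $\sum_{T\in\cC,\,|T|>d}(|T|-d)j_T=e-(\alpha_{\cF,\cC}-df_\cF)$. The main obstacle is that this extra equality is not of sum-of-coordinates type, so \Cref{cor:counting} does not apply directly; I would circumvent it by stratifying over the finitely many nonnegative integer solutions of this constant equation in the variables $(j_T)_{T\in\cC,\,|T|>d}$. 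On each stratum the remaining variables $(j_T)_{T\in\cC,\,|T|=d}$ are constrained only by $\sum j_T=n-\text{const}$ and $j_T\geq 0$, so \Cref{cor:counting} gives a polynomial count in $n$. Summing over strata and applying the same inclusion-exclusion over $\cA\subseteq \cP$ (using \Cref{cor: easy intersection} to identify intersections with single sets $\cG_{\tilde{\cF},\tilde{\cC}}(n)$) then yields the desired polynomial.
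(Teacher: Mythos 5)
Your argument for the first assertion is the paper's argument: reduce via \Cref{lem:InclusionExclusion} and \Cref{rem:enough to count} to the intersections $\bigcap_{U}\cG_{\cF,\cC}(n)\cap\bigcap_{V}I_{\cF',\cC'}(n)$, identify these via \Cref{cor: easy intersection} and \Cref{prop: arbitrary intersection} with the lattice points of a polyhedron of the form $P_{\aa,\bb}$ on the hyperplane $\sum_T j_T = n - \mathrm{const}$, and invoke \Cref{cor:counting} plus Sperner's bound on antichains in $2^{[c]}$. Where you genuinely diverge is in establishing that the least generating degree is eventually linear: the paper imports this from \cite[Theorem 7.10]{NR} (linearity of $\dim\Delta(I_n)$) and only then identifies which pairs $(\cF,\cC)$ realize $d(n)=an+b$, whereas you derive $d(n)=dn+e$ internally by minimizing $\deg\xx^B = \alpha_{\cF,\cC} + \sum_{T\in\cC}|T|\,j_T$ over each $\cG_{\cF,\cC}(n)$ and then over the finitely many pairs; this is a clean, self-contained substitute that the description in \Cref{cor:smaller gen set} fully supports (every minimal generator lies, up to symmetry, in the union, and a minimum-degree element of $I_n^\vee$ cannot have a proper divisor in $I_n^\vee$). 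Your final count of the degree-$d(n)$ orbits also matches the paper in substance: subtracting $d$ times the column-count equation from the degree equation yields $\sum_{|T|>d}(|T|-d)j_T = e-(\alpha_{\cF,\cC}-df_\cF)$, whose nonnegative solution set is finite (indeed, for the contributing pairs it forces $j_T=0$ for $|T|>d$, which is exactly the paper's restriction to $\widetilde{\cG}_{\cF,\cC}(n)$), and the residual count over the coordinates with $|T|=d$ is a binomial coefficient in $n$; your stratification is just a more flexible phrasing of this. The only cosmetic caveat is that for the minimum-degree count the inclusion-exclusion need only run over the sets $\cG_{\cF,\cC}(n)$ themselves (no $I_{\cF',\cC'}$ correction terms), precisely because minimum-degree elements are automatically minimal generators — you observe this, so no gap.
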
 

\begin{proof} 
In order to establish the first statement, using  \Cref{thm:min gens in general}, we have to determine the cardinality of the set $\cG (I_n^{\vee})$ of matrices. \Cref{lem:InclusionExclusion} shows that $\# \cG (I_n^{\vee})$ is given by a polynomial in $n$ for $n \gg 0$ if this is true for the cardinality of any intersection $\bigcap_{(\cF,\cC)\in U}\cG_{\cF,\cC}(n)\cap \bigcap_{(\cF',\cC')\in V}I_{\cF',\cC'}(n)$ with subsets $U, V$ of $\cP$ (see \Cref{rem:enough to count}). Fix $U$ and $V$ and assume that the corresponding intersection, denoted $\cI$, is not empty. In this case, \Cref{prop: arbitrary intersection} shows that $\cI = \cG_{\tilde{\cF},\tilde{\cC}}(n) \cap \bigcap_{(\cF',\cC')\in V}I_{\cF',\cC'}(n)$ for some 
$(\tilde{\cF},\tilde{\cC}) \in \cQ$.   

For any $(\cF', \cC') \in V$, consider subsets $Y_{\cF', \cC'} = \{ E \subseteq F' \; \mid \; E \neq \varnothing, \ \cC'  \not\subseteq \langle E \rangle\}$ and $Z_{\cF', \cC'} = \{E \subseteq F' \cup \cC'  \; \mid \; \cC' \subseteq E \}$. 
Writing $M_{\tilde{\cF}} = [\Ast_{T \in \tilde{F}} A_T (j_T)]$ and  $M_{\cF'} = [\Ast_{T \in F' } A_T (\ell_{T, \cF'})]$ with $\tilde{F} = \Supp (\tilde{\cF})$ and $\tilde{F'} = \Supp (\tilde{\cF'})$, define for any subset $D \subseteq \tilde{\cC}$ an integer $a_D$ by 
\[
a_D = \begin{cases}
0 & 
\begin{minipage}{6.9cm}
$\text{ if } D \neq \tilde{\cC} \cap \langle E \rangle \text{ for each }  \\
 E \in Y_{\cF', \cC'} \text{ with $(\cF', \cC') \in V$} $; 
\end{minipage} \\
\displaystyle{ \max \big \{ 0, \sum_{T\in F'\cap \langle E\rangle} \ell_{T, \cF'} - \sum_{T\in \tilde{\cF} \cap \langle E \rangle} j_T } \; \mid \; 
\begin{minipage}{3.9cm}
$D = \tilde{\cC} \cap \langle E \rangle,  E \in Y_{\cF', \cC'} \\
\text{ for some }  (\cF', \cC') \in V$
\end{minipage}
\big \} 
%D = \tilde{\cC} \cap \langle E \rangle,  E \in Z_{\cF', \cC'}  \text{ for some }  (\cF', \cC') \in V \big \} } 
& \text{ otherwise}. 
\end{cases}
\]
For $D \in W = \big \{ D \subseteq \tilde{\cC} \; \mid \; D =  \tilde{\cC} - \langle E \rangle \text{ for some } E \in Z_{\cF', \cC'} \text{ with } (\cF', \cC') \in V \big \}$, set 
\[
b_D = \min \big \{ \sum_{T\in F'\cap \langle E\rangle} \ell_{T, \cF'} - \sum_{T\in \tilde{\cF} \cap \langle E\rangle} j_T  \; \mid \; 
D = \tilde{\cC} - \langle E \rangle,  E \in Z_{\cF', \cC'} 
\text{ for some }  (\cF', \cC') \in V \big \}. 
\]
Assume $\tilde{\cC}$ has $k$ elements $T_1,\ldots,T_k$. Using \Cref{not: polyhedron}, define a polyhedron
\begin{equation*}
P_{\aa,\bb} := \left\{(x_{T_1},\ldots,x_{T_k}) \in \RR^k \; \mid \;  \sum_{T \in D} x_T \ge a_D \text{ for any } D \subseteq \tilde{\cC}, \sum_{T \in D} x_T \le b_D\text{ for any } D \in W \right\}.
\end{equation*}

Then \Cref{prop: arbitrary intersection} shows that $B= M_{\tilde{\cF}} \ast \left[\Ast_{T \in \tilde{\cC}} A_T(j_T)\right]$ is in $\cI$ if and only if 
\[
(j_{T_1},\ldots,j_{T_k}) \in P_{\aa,\bb}  \cap \big \{ (j_{T_1},\ldots,j_{T_k}) \in \NN_0^k \; \mid \;  j_{T_1} + \ldots + j_{T_k} =  n - \sum_{T \in \tilde{F}} \ell_T \big \}. 
\]
By \Cref{cor:counting}, the cardinality of the above intersection is given by a polynomial in $n$ if $n \gg 0$. The degree bound also follows because the longest antichain in $2^{[c]}$ has length $ \binom{c}{\lfloor \frac c2 \rfloor}$. So, our first assertion is proven. 
\medskip

%%%%%%
We now show the second assertion. 
By \cite[Theorem 7.10]{NR}, the dimension of $\Delta (I_n)$ is eventually given by a linear function in $n$. Denote it by $D(n)$. For $n \gg 0$, the facets of $\Delta (I_n)$ correspond to minimal generators of $I_n^{\vee}$ of least degree, which is 
$d(n) = c n - 1 - D(n)$.  Write $d(n) = an + b$ for integers $a, b$. Note that $0 \le a \le c$. Since $d(n)$ is the minimum degree of a 
generator of $I_n^{\vee}$,   \Cref{cor:smaller gen set} implies that the set of degree $d(n)$ monomials in $I_n^{\vee}$
 is generated by the $\Sym(n)$-orbits of monomials $\xx^B$ with $B$ in the set
 \begin{equation}
    \label{eq:min deg gens}
 \widetilde{\cG} (n) = \bigcup_{(\cF,\cC)\in \cP} \{A = [ \Ast_{T \in F  \cup \cC} A_T(\ell_T)] \in   \cG_{\cF,\cC} (n) \; \mid \; \deg \xx^A  = d(n) \}. 
 \end{equation}

Consider now the degree of a monomial $\xx^A$ with $A = M_{\cF} \ast [ \Ast_{T \in F  \cup \cC} A_T(\ell_T)] $ in a set $\cG_{\cF,\cC} (n)$.  It is $\deg \xx^A = \sum_{T \in F  \cup \cC} \ell_T \cdot \# T$. 
Since the columns of $A$ with support in $F$ are fixed,  $\xx^A$ has minimum degree if and only if $\ell_T = 0$ whenever $\# T >  m_{\cC}$, where $m_{\cC}$ is the minimum cardinality of an element of $\cC$. Hence,  the minimum degree of monomial $\xx^A$ with $A \in \cG_{\cF,\cC} (n)$ is 
\[
\sum_{T \in F} \ell_T \cdot \# T + m_{\cC} (n - \sum_{T \in F} \ell_T) = m_{\cC} n  + \sum_{T \in F} \ell_T \cdot (\# T - m_{\cC}). 
\]
Comparing with Equation \eqref{eq:min deg gens}, it follows that, for $n \gg 0$, the set $\cG_{\cF,\cC} (n)$ contributes a matrix to  $\widetilde{\cG} (n)$ if and  only if $m_{\cC} = a$ and $b = \sum_{T \in F} \ell_T \cdot (\# T - m_{\cC})$. Denote by $\widetilde{\cP}$ the set of pairs $(\cF, \cC) \in \cP$ that satisfy these two conditions. For $(\cF, \cC) \in \widetilde{\cP}$,  consider the set 
\[
 \widetilde{\cG}_{\cF, \cC} (n) = 
\big\{ A = M_{\cF} \ast \left[ \Ast_{S\in\cC, \# S = m_{\cC}  } A_S(\ell_S)\right] \; \mid \; A \in  \cG_{\cF,\cC} (n) \big \}.
\]

By now we have seen that for $n \gg 0$, 
\[
 \widetilde{\cG} (n) = \bigcup_{(\cF,\cC)\in \widetilde{\cP}}  \widetilde{\cG}_{\cF, \cC} (n). 
\]
Thus, the principle of inclusion-exclusion gives  
\[
\#  \widetilde{\cG} (n) = \sum_{\varnothing \neq \cA \subseteq \widetilde{\cP}} (-1)^{\# \cA} \cdot \# \bigcap_{(\cF, \cC) \in \cA}   \widetilde{\cG}_{\cF, \cC} (n). 
\]
By \Cref{cor: easy intersection}, each of the intersections $\bigcap_{(\cF, \cC) \in \cA}   \widetilde{\cG}_{\cF, \cC} (n)$ equals $\cG_{\cF,\cC} (n)$ for some $(\cF, \cC) \in \cQ$, and so its cardinality is given by a polynomial in $n$ if $n \gg 0$. This completes the argument.
\end{proof}

The above result implies \Cref{cor:intro-number min gens} in the Introduction. 

\begin{proof}[Proof of \Cref{cor:intro-number min gens}] 
The minimal generators of $I_n^{\vee}$ are in bijection to the associated prime ideals of $I_n$. 
\end{proof}

Similarly, the second part of \Cref{thm:gens from intro} has the following consequence.

\begin{cor}
If $(I_n)_{n\in\NN}$ is a $\Sym$-invariant chain of squarefree monomial ideals, then the number of $\Sym(n)$-orbits needed to generate the prime components of $I_n$, whose height equals the height of $I_n$, grows polynomially in $n$ eventually. 
\end{cor}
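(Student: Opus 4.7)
The plan is to reduce the statement to the second part of \Cref{thm:gens from intro} (equivalently, the last part of \Cref{thm:intro-gen-degrees}) via Alexander duality. The key observation is that for a squarefree monomial ideal $I_n$, the standard dictionary gives a $\Sym(n)$-equivariant bijection between minimal monomial generators $\xx^\mm$ of $I_n^\vee$ and the prime components $P_\mm$ of $I_n$, under which $\deg \xx^\mm$ equals the height of $P_\mm$.

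First, I would spell out this dictionary. Since $I_n$ is radical (being a squarefree monomial ideal), its associated primes coincide with its minimal primes, i.e., with its prime components. By the definition of the Alexander dual together with the correspondence $\mm \leftrightarrow P_\mm$ recalled in \Cref{sec: background}, the minimal monomial generators $\xx^\mm$ of $I_n^\vee$ are in bijection with the prime components $P_\mm$ of $I_n$. This bijection is $\Sym(n)$-equivariant because the action of $\Sym(n)$ permutes the variables and therefore commutes with the passage from $\mm$ to $P_\mm$. In particular, $\Sym(n)$-orbits on the two sides are in one-to-one correspondence, and $\mathrm{ht}(P_\mm) = \deg \xx^\mm$. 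Consequently, the height of $I_n$ equals the minimum of the heights of its prime components, which equals the minimum degree of a minimal generator of $I_n^\vee$; and the prime components of $I_n$ of minimum height correspond exactly to the minimal generators of $I_n^\vee$ of this minimum degree.

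Second, I would invoke the second part of \Cref{thm:gens from intro}: for $n \gg 0$, the minimum degree of a minimal generator of $I_n^\vee$ is given by a linear function $d(n)$, and the number of $\Sym(n)$-orbits of minimal generators of $I_n^\vee$ of degree $d(n)$ is given by a polynomial in $n$. Transporting this count across the $\Sym(n)$-equivariant bijection of the previous paragraph yields the claim. No substantive obstacle is anticipated, as all the difficult enumeration is already contained in \Cref{thm:gens from intro}; this corollary is a straightforward translation via Alexander duality, entirely parallel to the way \Cref{cor:intro-number min gens} is deduced from the first part of the same theorem.
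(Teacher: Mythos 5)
Your proposal is correct and matches the paper's (essentially unstated) argument exactly: the paper derives this corollary from the second part of \Cref{thm:gens from intro} via the same $\Sym(n)$-equivariant dictionary between minimal generators $\xx^\mm$ of $I_n^\vee$ and prime components $P_\mm$ of $I_n$, under which degree corresponds to height. Your write-up just makes explicit the bookkeeping (radicality, equivariance, minimum degree equals minimum height) that the paper leaves to the reader.
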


\Cref{thm:gens from intro} states that the polynomial that gives the number of $\Sym(n)$-orbits of monomials needed to generate the Alexander duals $I_n^\vee$ has degree at most one less than the maximum cardinality of an antichain in $2^{[c]}$. As the following examples show, this bound is sharp, but is certainly not always attained. 
% We give an example of when this is maximum polynomial degree is achieved and when it is not.

\begin{example}\label{ex: polynomialBoundSharp}
(i) Let $c=2$ and consider the matrix $A= \begin{bmatrix}1\\1\end{bmatrix}$, so that $I_n = \ideal{\Sym(n) \cdot x_{1,1}x_{2,1}}$ for each $n\geq 1$. Here we have $k_{\{1,2\}}=1$, and $k_{\{1\}} = k_{\{2\}}=0$. Below we list $M\cG_\cC(n)$ for each antichain $\cC\subset 2^{[c]}$:

\begin{center}
\renewcommand*{\arraystretch}{1.25}
\begin{tabular}{|c|c|c|c|}
    \hline
    $\cC$ & $2^{[c]}- \overline{\ideal{\cC}}$ & $k_\cC$ & $M\cG_\cC(n)$ \\
    \hline \hline
    $\{1\}$ & $\{1,12\}$ & $1$ & $\left\{ \begin{bmatrix} 1 & \cdots & 1 \\ 0 & \cdots & 0 \end{bmatrix}\right\}$ \\
    \hline 
    $\{2\}$ & $\{2,12\}$ & $1$ & $\left\{ \begin{bmatrix} 0 & \cdots & 0 \\ 1 & \cdots & 1\end{bmatrix}\right\}$ \\
    \hline
    $\{1,2\}$ & $\{12\}$ & $1$ & $\left\{ \begin{bmatrix} 1 & \cdots & 1 & 0 & \cdots & 0 \\ 0 & \cdots & 0 & 1 & \cdots & 1 \end{bmatrix} : j_1 + j_2 = n, j_1, j_2 >0\right\}$ \\
    \hline
    $\{12\}$ & $\{1,2,12\}$ & $1$ & $\varnothing$, since $k_{\{1\}} = k_{\{12\}}$ but $\langle 1 \rangle \supset \{12\}$ \\
    \hline
\end{tabular}
\end{center}

Here, the largest antichain $\cC = \{1,2\}$ of $2^{[2]}$ has $M\cG_\cC(n)$ non-empty for $n \gg0$. Indeed, $\#\cG(n)= n+1$, which is a polynomial of degree $1=\#\cC-1$.

Similarly, in \Cref{ex: two-Orbit yet again}, we had $c=3$ and the number of minimal orbit generators was given by $\frac{n^2}{2}+\frac{5}{2}n-4$. So the degree bound is also attained. 

%In contrast, consider the analogous table for the matrix 
%$B = \begin{bmatrix} 1 \\ 0 \end{bmatrix},$
%so $k_{\{1,2\}} = k_{\{2\}} = 0$ and $k_{\{1\}} = 1$.
%
%\begin{center}
%\renewcommand*{\arraystretch}{1.25}
%\begin{tabular}{|c|c|c|c|}
%    \hline
 %   $\cC$ & $2^{[c]}- \overline{\ideal{\cC}}$ & $k_\cC$ & $M\cG_\cC(n)$ \\
  %  \hline \hline
%    $\{1\}$ & $\{1,12\}$ & $1$ & $\left\{ \begin{bmatrix} 1 & \cdots & 1 \\ 0 & \cdots & 0 \end{bmatrix}\right\}$ \\
 %   \hline 
  %  $\{2\}$ & $\{2,12\}$ & $0$ & $\varnothing$, since $k_{\{2\}} < 1$ \\
  %  \hline
   % $\{1,2\}$ & $\{12\}$ & $0$ & $\varnothing$, since $k_{\{1,2\}}<1$ \\
%    \hline
 %   $\{12\}$ & $\{1,2,12\}$ & $1$ & $\varnothing$, since $k_{\{1\}} = k_{\{12\}}$ but $\langle 1 \rangle \supset \{12\}$ \\
  %  \hline
%\end{tabular}
%\end{center}

%In this case, when $\cC = \{1,2\}$, the set $M\cG_\cC(n)$ is empty. In particular, there is a single minimal orbit generator of $I_n^\vee$ for any $n$ and thus the degree bound, which would be $1$, is not attained.

(ii) In contrast, let $c=3$ and consider the matrices $A_1 = \begin{bmatrix} 1 & 0 \\ 0 & 1 \\ 0 & 0\end{bmatrix}$ and $A_2 = \begin{bmatrix} 1 & 0 \\ 0 & 0 \\ 1 & 0 \end{bmatrix}$, so that $I_n = \langle \Sym(n) \cdot \{x_{1,1}x_{2,2}, \, x_{1,1}x_{3,1}\}\rangle$ for $n\geq 2$. We list a table of minimal generators for $I_n^\vee$ by antichain. We only list the antichains which contribute at least one minimal generator:

\begin{center}
\renewcommand*{\arraystretch}{1.25}
\begin{tabular}{|c|c|c|}
\hline
$\cC$ & $M\cG_\cC(n)$ & $\# M\cG_\cC(n)$ \\
\hline \hline
$\{1\}$ & $\left\{\begin{bmatrix} 1 & \cdots & 1 \\ 0 & \cdots & 0 \\ 0 & \cdots & 0 \end{bmatrix}\right\}$ & $1$ \\
\hline
$\{3, 12\}$ & $\left\{ \begin{bmatrix}1 & \cdots & 1 & 0 \\
1 & \cdots & 1 & 0 \\
0 & \cdots & 0 & 1\end{bmatrix} \right\}$  & $1$\\
\hline
$\{12, 23\}$ & $\left\{\begin{bmatrix} 1 & \cdots & 1 & 0 & \cdots & 0 \\
1 & \cdots & 1 & 1 & \cdots & 1 \\ 0 & \cdots & 0 & 1 & \cdots & 1 \end{bmatrix} : j_{12}\geq 1, j_{23}\geq 2\right\}$ & $n-2$ (for $n\geq 3$) \\
\hline
$\{23\}$ & $\left\{\begin{bmatrix} 0 & \cdots & 0 \\ 1 & \cdots & 1 \\ 1 & \cdots & 1 \end{bmatrix}\right\}$ & $1$ \\ \hline
\end{tabular}
\end{center}

For $n\geq 3$, the number of minimal generators for $I_n^\vee$ is equal to $n+1$, a linear polynomial in $n$. Thus the degree bound of Theorem \ref{thm:gens from intro} is not attained.
\end{example}

\section{An application to face numbers}
The goal of this section is to provide an application of one of our main results, \Cref{thm:gens from intro}, to chains of simplicial complexes. More precisely, given a $\Sym$-invariant chain $(I_n)_{n\in \mathbb{N}}$ of squarefree monomial ideals $I_n\subseteq R_n=\kk[x_{i,j}~\mid~i\in [c],j\in [n]]$, we are interested in the face numbers of the associated Stanley-Reisner complexes $(\Delta(I_n))_{n\in \mathbb{N}}$. Since by \Cref{prop: symmetricFaces}  $\Delta(I_n)$ is $\Sym(n)$-invariant, for every $0\leq i\leq \dim\Delta(n)-1$ there exist finitely many $i$-dimensional faces $F_1,\ldots,F_t$ of  $\Delta(I_n)$ such that $\Sym(n)\{F_1,\ldots,F_t\}$ equals the set of all $i$-faces of $\Delta(I_n)$, i.e., the set of $i$-faces of $\Delta(I_n)$ consists of only finitely many $\Sym(n)$-orbits. In the following, we let $f_i(n)^{\Sym}$ denote this number. Our goal is to prove that $f_i(n)^{\Sym(n)}$ is a polynomial in $n$ for $n$ sufficiently large. 

Since the chain $(I_n)_{n\in \mathbb{N}}$ stabilizes, as in the previous sections, we can assume that $I_n=(0)$ for $n\leq m$ and $I_n=\Sym(n)\cdot\langle \xx^{A_1},\ldots,\xx^{A_s}\rangle$ for $n\geq m$, where $A_1,\ldots,A_s$ are $0-1$-matrices of size at most $c\times m$. 

The following lemma will be crucial.

\begin{lemma}\label{lem: allFacetsAppear}
Fix $j\geq 0$. Then the number of $\Sym(n)$-orbits of facets of dimension $j$ in $\Delta(I_n)$ is eventually constant.
\end{lemma}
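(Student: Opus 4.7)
The plan is to translate the combinatorial question about facets of $\Delta(I_n)$ into a counting problem for minimal generators of $I_n^\vee$ and then apply the structural results of \Cref{thm:min gens in general} and \Cref{cor: ADmembershipLemma}. First, by \Cref{fact: alexDualComplements}, the assignment $F\mapsto \mathbf{X}/F$ gives a $\Sym(n)$-equivariant bijection between the $j$-dimensional facets of $\Delta(I_n)$ and the minimal generators of $I_n^\vee$ of degree $cn-(j+1)$. In terms of standard exponent matrices, these are exactly the minimal generators whose matrix has exactly $j+1$ zero entries. Hence it suffices to show that the number of $\Sym(n)$-orbits of such minimal generators is eventually constant in $n$.

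The next step is to pin down which pairs $(\cF,\cC)\in\cP$ can ever contribute such generators. For any matrix $B=M_{\cF}\ast[\Ast_{S\in\cC}A_S(\ell_S)]\in \cG_{\cF,\cC}(n)$, the number of zero entries is $Z_{\cF}+\sum_{S\in\cC}\ell_S(c-|S|)$, where $Z_{\cF}=\sum_{T\in F}\ell_T(c-|T|)$ is determined by $\cF$. The uniform bound $k_{\cF}=\sum_{T\in F}\ell_T\leq s(m-1)$ (established in the proof of \Cref{cor: easy intersection}) keeps $Z_{\cF}$ bounded. If $\cC\neq\{[c]\}$, then every $S\in\cC$ satisfies $|S|\leq c-1$ (since $\{[c]\}$ is the unique antichain containing $[c]$), so each column contributed by $\cC$ creates at least one zero, giving a total of at least $n-k_{\cF}$ zeros. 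Thus for $n>k_{\cF}+j+1$ no matrix arising from $(\cF,\cC)$ with $\cC\neq\{[c]\}$ has exactly $j+1$ zeros. Consequently, for $n$ large, all relevant minimal generators come from pairs $(\cF,\{[c]\})\in\cP$ with $Z_{\cF}=j+1$, and there are only finitely many such~$\cF$. For each, the corresponding candidate matrix $B_{\cF}\coloneqq M_{\cF}\ast[A_{[c]}(n-k_{\cF})]$ is the unique element of $\cG_{\cF,\{[c]\}}(n)$ with the required zero-count.

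It remains to show that, for each such $\cF$, whether $B_{\cF}$ is a minimal generator stabilizes for large $n$. By \Cref{thm:min gens in general}, $B_{\cF}$ is a minimal generator iff $B_{\cF}\notin I_{\cF',\cC'}(n)-\cG_{\cF',\cC'}(n)$ for every $(\cF',\cC')\in\cP-\{(\cF,\{[c]\})\}$. A direct inspection of supports (using $F\cap\{[c]\}=\varnothing$, the antichain property of $\cC'$, and $F'\cap\langle\cC'\rangle=\varnothing$) yields that $B_{\cF}\in\cG_{\cF',\cC'}(n)$ forces $(\cF',\cC')=(\cF,\{[c]\})$, so the ``subtract $\cG_{\cF',\cC'}(n)$'' term can be ignored and minimality reduces to $\xx^{B_{\cF}}\notin I_{\cF',\cC'}(n)$ for all other $(\cF',\cC')$. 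Now apply \Cref{cor: ADmembershipLemma} with $\cC=\{[c]\}$: since $[c]\in\langle E\rangle$ for every nonempty $E$, condition (i) reads $n-k_{\cF}\geq \sum_{T\in F'\cap\langle E\rangle}\ell_{T,\cF'}-\sum_{T\in F\cap\langle E\rangle}\ell_T$, whose right-hand side is bounded by $s(m-1)$ and thus holds automatically once $n\geq k_{\cF}+s(m-1)$. Condition (ii) has vanishing left-hand side (as $\cC-\langle E\rangle=\varnothing$) and becomes the $n$-independent inequality $\sum_{T\in F-\langle E\rangle}\ell_T\leq \sum_{T\in F'-\langle E\rangle}\ell_{T,\cF'}$. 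Hence, for all $n$ beyond an explicit threshold, $\xx^{B_{\cF}}\in I_{\cF',\cC'}(n)$ is a purely combinatorial condition independent of $n$, and therefore the set of $\cF$'s yielding minimal generators stabilizes.

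The main obstacle is the bookkeeping in the third step: verifying that, in the presence of $\cC=\{[c]\}$, the conditions of \Cref{cor: ADmembershipLemma} split cleanly into an $n$-dependent part (which is vacuous for large $n$ thanks to the uniform bound on $k_{\cF'}$) and an $n$-independent part, and simultaneously that $B_{\cF}$ cannot coincidentally lie in some $\cG_{\cF',\cC'}(n)$ for $(\cF',\cC')\neq(\cF,\{[c]\})$. Once these points are in hand, the eventual constancy of $f_j(n)^{\Sym}$ follows from the finiteness of the collection of eligible $\cF$'s.
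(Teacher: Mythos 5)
Your argument is correct, but it takes a genuinely different and much heavier route than the paper. The paper's proof is elementary and direct: given a $j$-dimensional facet $F$ of $\Delta(I_{n+1})$ with $n\geq\max(m,j+1)$, the exponent matrix of $\mathbf{X}/F$ has only $j+1$ zero entries, so some column has full support $[c]$; deleting that column and applying \Cref{fact:mon in dual} shows $F\in\Delta(I_n)$, and \Cref{lem: subcomplex} shows $F$ is still a facet there. Hence the number of orbits of $j$-facets is non-increasing in $n$ and therefore eventually constant — no information about what the facets actually are is needed. You instead push the question through the full minimal-generator machinery of Sections 6 and 7: you isolate the pairs $(\cF,\cC)\in\cP$ with $\cC=\{[c]\}$ as the only possible sources of generators with exactly $j+1$ zeros for large $n$ (using the uniform bound on $\#$ of columns of $M_{\cF}$ from \Cref{cor: easy intersection}), and then show via \Cref{lem: almostMinimalGenSet}, \Cref{thm:min gens in general} and \Cref{cor: ADmembershipLemma} that the minimality test for each candidate $B_{\cF}$ becomes $n$-independent beyond an explicit threshold; your support computations (forcing $\cC'=\{[c]\}$ and $\cF'=\cF$, and the vacuity of condition (i) of \Cref{cor: ADmembershipLemma} for large $n$) all check out. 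What your approach buys is strictly more: an explicit description of the stabilized set of facet orbits and an effective bound on when stabilization occurs, essentially a fixed-codegree analogue of the minimal-degree analysis in \Cref{thm:gens from intro}. What it costs is that a five-line monotonicity argument is replaced by a dependence on the hardest results of the paper; for the stated lemma the paper's route is the more economical one.
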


\begin{proof}
Let $n\geq \max(m,j+1)$ and let $F$ be a facet of $\Delta(I_{n+1})$ with $\dim F=j$. Then, by \Cref{fact: alexDualComplements} $\frac{\textbf{X}}{F}\in I_{n+1}^\vee$. Let $A$ be the exponent matrix of $\frac{\textbf{X}}{F}$. It has $c(n+1)-(j+1)$ entries equal to $0$. Since $n\geq j+1$, there has to exist a column of $A$ with support $[c]$. We can further assume that this is the last column of $A$. Let $B$ be the matrix obtained from $A$ by removing the last column. Then \Cref{fact:mon in dual} implies that $\xx^B\in I_{n}^\vee$, i.e., $F\in\Delta(I_n)$. It remains to show that $F$ is a facet of $\Delta(I_n)$. Otherwise, there exists some $G\in \Delta(I_n)$ with $F\subsetneq G$. By \Cref{lem: subcomplex} it follows that $G\in \Delta(I_{n+1})$, which yields a contradiction.
\end{proof}

We now formulate our main result of this section.

\begin{theorem}\label{prop: fVectorGeneral}
Given a $\Sym$-invariant chain of squarefree monomial ideals $(I_n)_{n\in \mathbb{N}}$ and fixing an integer $j\geq 0$,  the number $f_j(n)^{\Sym}$ of $\Sym(n)$-orbits that form the $j$-faces of $\Delta(I_n)$ is a polynomial in $n$ when $n$ is sufficiently large.
\end{theorem}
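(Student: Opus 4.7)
The plan is to show that $f_j(n)^{\Sym}$ is eventually \emph{constant} in $n$, which is certainly a polynomial (of degree zero). The argument has essentially the same shape as the proof of \Cref{lem: allFacetsAppear}, but applied to all $j$-faces rather than only to facets of dimension $j$.

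First I would parameterize the $\Sym(n)$-orbits of squarefree degree-$(j+1)$ monomials of $R_n$ by their standard shapes. For $n\geq j+1$, each such orbit has a unique representative in standard form, encoded by a tuple $\mathbf{k}=(k_T)_{\varnothing\neq T\subseteq [c]}\in\NN_0^{2^c-1}$ with $\sum_T |T|\,k_T=j+1$; since the column-count constraint $\sum_T k_T\leq j+1\leq n$ is automatic, every such tuple is realized. Let $\mathcal{T}_j$ denote this finite set of tuples, whose cardinality depends only on $c$ and $j$, and for $\mathbf{k}\in\mathcal{T}_j$ write $\xx^{A(\mathbf{k})}$ for the standard monomial corresponding to $\mathbf{k}$ (padded by zero columns so as to live in $R_n$).

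Next I would translate the theorem into a stabilization question about a subset of $\mathcal{T}_j$. Since the $j$-faces of $\Delta(I_n)$ are precisely the squarefree degree-$(j+1)$ monomials of $R_n$ not lying in $I_n$, one has $f_j(n)^{\Sym}=|S_n|$, where $S_n:=\{\mathbf{k}\in\mathcal{T}_j\;\mid\;\xx^{A(\mathbf{k})}\notin I_n\}$. Two opposing inclusions control the behaviour of $S_n$. The $\Sym$-invariance of the chain gives $I_n R_{n+1}\subseteq I_{n+1}$; taking contrapositives of $\xx^{A(\mathbf{k})}\in I_n\Rightarrow \xx^{A(\mathbf{k})}\in I_{n+1}$ yields $S_{n+1}\subseteq S_n$ for every $n$. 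In the reverse direction, \Cref{lem: subcomplex} asserts that $\Delta(I_n)\subseteq \Delta(I_{n+1})$ for $n\gg 0$, which translates immediately into $S_n\subseteq S_{n+1}$.

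Combining both inclusions, $S_n=S_{n+1}$ for all sufficiently large $n$, so $f_j(n)^{\Sym}=|S_n|$ is eventually constant and in particular eventually a polynomial in $n$. The whole argument rests on the parameterization of orbits by the finite set $\mathcal{T}_j$, which becomes independent of $n$ once $n\geq j+1$; once this is in place, no enumeration or counting of orbit sizes is required, and the only potential obstacle — verifying that the set of realized shapes really is monotone in $n$ — is already handled by \Cref{lem: subcomplex}.
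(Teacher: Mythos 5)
Your proof is correct, and it in fact establishes a strictly stronger conclusion than the paper does: $f_j(n)^{\Sym}$ is eventually \emph{constant}, not merely eventually polynomial. The decisive observation is that once $n\ge j+1$ the $\Sym(n)$-orbits of squarefree degree-$(j+1)$ monomials are parameterized by the fixed finite set $\mathcal{T}_j$ of standard shapes, so $f_j(n)^{\Sym}=\#S_n$ is bounded by a constant independent of $n$; the inclusion $S_{n+1}\subseteq S_n$ (from $I_nR_{n+1}\subseteq I_{n+1}$) together with finiteness of $\mathcal{T}_j$ already forces stabilization, and the reverse inclusion from \Cref{lem: subcomplex} is a pleasant confirmation rather than a logical necessity. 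The paper takes a much heavier route: it passes to the $j$-skeleton by adding the chain $(J_{j,n})$ generated by squarefree degree-$(j+2)$ monomials, identifies $j$-faces with facets of $\Delta(I_n+J_{j,n})$ via \Cref{fact: alexDualComplements}, handles non-purity by iteratively deleting low-dimensional facets using \Cref{lem: allFacetsAppear}, and finally invokes the full strength of \Cref{thm:gens from intro} on minimal generators of Alexander duals. That machinery is indispensable when the relevant dimension grows with $n$ (as it does for the facets of $\Delta(I_n)$ themselves), but for a fixed $j$ your direct argument is cleaner, sidesteps the purity discussion entirely, and --- since each orbit of a fixed shape has size polynomial in $n$ --- also yields the statement about total face numbers asserted in the introduction.
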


\begin{proof}
We first assume that $\Delta(I_n)$ is pure for $n\gg 0$. 
We denote by $\Delta^{(j)}(I_n)$ the $j$-skeleton of  $\Delta(I_n)$, i.e., the simplicial complex consisting of all faces of $\Delta(I_n)$ of dimension at most $j$. Algebraically, this means that $I_{\Delta^{(j)}(I_n)}=I_{\Delta(I_n)}+J_{j,n}$, where $J_{j,n}\subset R_n$ is the ideal generated by all squarefree monomials of degree $j+2$. It is easy to see that $(J_{j,n})_{n\geq j+2}$ is a $\Sym$-invariant chain, hence so is $(I_n + J_{j,n})_{n\geq \max(m,j+2)}$. The claim follows by combining \Cref{fact: alexDualComplements} and \Cref{thm:gens from intro}.

Now assume that $\Delta(I_n)$ is not pure eventually. As above, the simplicial complex $\Delta(I_n + J_{j,n})$ consists of all faces of $\Delta(I_n)$ of dimension at most $j$. However, since $\Delta(I_n)$ need not be pure, there may be facets in $\Delta(I_n)$ and hence of $\Delta(I_n + J_{j,n})$ of dimension strictly less than $j$. By Lemma \ref{lem: allFacetsAppear}, we can choose $N$ large enough so that for all $i< j$, the number of facets of $\Delta(I_n)$ of dimension $i$ is constant whenever $n \geq N$.

Let $i = \max\left\{\dim (F) ~\mid~ F \text{ a facet of }\Delta(I_n) \text{ of dimension} < j\right\}$. 
Let $$K_n = I_n + J_{j,n} + \langle \xx^F ~\mid~ F \in \Delta(I_n)\text{ facet},\;\dim F=i\rangle.$$
It is easy to see that $(K_n)_{n\geq N}$ is a $\Sym$-invariant chain of ideals.

We may repeat the process of replacing $I_n + J_{j,n}$ with $K_n$, resulting in a removal of facets of dimension $i'$ for some fixed $i'<i$. This process will terminate and $\Delta(K_n)$ will have no facets of dimension $<j$ and in particular will be pure. Now the claim follows from the pure case.
\end{proof}

%%%%%%%%%%%%%%%%%%%%%%%%

\section*{Acknowledgments}

This project began during the online workshop ``Research Encounters in Algebraic and Combinatorial Topics'' (REACT) in February 2021. We are grateful to  Alessio D'Al\'i, Mariel Supina and Lorenzo Venturello for organizing this workshop and bringing our team together.
We also thank Heide Gluesing-Luerssen for showing us Example \ref{ex: one-orbit}.
The first author was partially supported by the NSF GRFP under Grant No. DGE-1650441. The fourth author was partially supported by Simons Foundation grant \#636513. The fifth author was partially supported by NSF grant DMS-2053288.

%%%%%%%%%%%%%%%%%%%%%%%%%%%%%%%%%%

%\bibliographystyle{amsplain}
%\bibliography{biblio}
\addcontentsline{toc}{section}{Bibliography}

\end{document}